\documentclass[reqno]{amsart}
\usepackage{CJK}
\usepackage{hyperref}
\usepackage{cite}
\usepackage{amsmath}
\usepackage{mathrsfs}
\usepackage{xcolor}
\usepackage{txfonts}
\usepackage{bbm}
\usepackage{pifont}
\usepackage{bbding}
\usepackage{amsmath, esint}
\usepackage{mathrsfs}
\usepackage{amsfonts}
\usepackage{amssymb}
\usepackage{amsfonts,amssymb,amsmath,indentfirst,cases,amsthm}
\usepackage{epsfig}
\usepackage[numbers,sort&compress]{natbib}

\textheight 20.0cm \textwidth 14.0cm
\numberwithin{equation}{section}

\newtheorem{theorem}{Theorem}[section]
\newtheorem{lemma}[theorem]{Lemma}
\newtheorem{definition}[theorem]{Definition}

\newtheorem{proposition}[theorem]{Proposition}

\newtheorem{corollary}[theorem]{Corollary}

\allowdisplaybreaks

\begin{document}
	
\title[\hfil Nonlocal double phase equations in the Heisenberg Group]{Local regularity for nonlocal double phase equations in the Heisenberg group}

\author[Y. Fang, C. Zhang and J. Zhang  \hfil \hfilneg]{Yuzhou Fang, Chao Zhang and Junli Zhang$^*$}

\thanks{$^*$Corresponding author.}

\address{Yuzhou Fang \hfill\break School of Mathematics, Harbin Institute of Technology, Harbin 150001, China}
\email{18b912036@hit.edu.cn}

\address{Chao Zhang  \hfill\break School of Mathematics and Institute for Advanced Study in Mathematics, Harbin Institute of Technology, Harbin 150001, China}
\email{czhangmath@hit.edu.cn}

\address{Junli Zhang  \hfill\break School of Mathematics and Data Science, Shaanxi University of Science and Technology, Xi'an 710021, China}
\email{jlzhang2020@163.com}

\subjclass[2020]{35D30; 35B45; 35B65; 47G20}
\keywords{Heisenberg Group; regularity; nonlocal double phase equation; Sobolev-Poincar\'{e} type inequality; energy inequalities}

\maketitle

\begin{abstract}
We prove interior boundedness and H\"{o}lder continuity for the weak solutions of nonlocal double phase equations in the Heisenberg group $\mathbb{H}^n$. This solves a problem raised by Palatucci and Piccinini et. al. in 2022 and 2023 for nonlinear integro-differential problems in the Heisenberg group $\mathbb{H}^n$. Our proof of the a priori estiamtes bases on the spirit of De Giorgi-Nash-Moser theory, where the important ingredients are  Caccioppoli-type inequality and Logarithmic estimate.  To achieve this goal, we establish a new and crucial Sobolev-Poincar\'{e} type inequality in local domain, which may be  of independent interest and potential applications.
\end{abstract}

\section{Introduction}
\label{sec-1}

In this paper, we are interested in local behaviour of the weak solutions to nonlocal double phase problem in the Heisenberg group ${{\mathbb{H}}^n}$, whose prototype is
\begin{equation}\label{eqy11}
\mathrm{P.V.}\int_{\mathbb{H}^n}\left[\frac{|u(\xi)-u(\eta)|^{p-2}(u(\xi)-u(\eta))}{\|\eta^{-1}\circ\xi\|^{Q+sp}_{\mathbb{H}^n}}+a(\xi,\eta)
\frac{|u(\xi)-u(\eta)|^{q-2}(u(\xi)-u(\eta))}{\|\eta^{-1}\circ\xi\|^{Q+tq}_{\mathbb{H}^n}}\right]\,d\eta=0 \quad \text{in } \Omega,
%\mathrm{P.V.}\int_{\mathbb{H}^n}|u(\xi)-u(\eta)|^{p-2}(u(\xi)-u(\eta))K_{sp}(\xi,\eta)+a(\xi,\eta)
%|u(\xi)-u(\eta)|^{q-2}(u(\xi)-u(\eta))K_{tq}(\xi,\eta)\,d\eta=0 \quad \text{in } \Omega,
\end{equation}
where $1<p\le q<\infty$, $s,t\in(0,1)$, $ a(\cdot,\cdot)\ge 0$, $Q=2n+2$ is the homogeneous dimension and $\Omega$ is an open bounded subset of $\mathbb{H}^n$ ($n\ge1$). The leading operator could change between two different fractional elliptic phase according to whether the modulating coefficient $a$ is zero or not, which is the significant characteristic of this integro-differential equation. In the display above, $\|\cdot\|_{\mathbb{H}^n}$ and $\mathrm{P.V.}$ mean the standard Heisenberg norm and ``in the principal value sense", respectively.

We observe that, if the coefficient $a\equiv0$, Eq. \eqref{eqy11} is reduced to the $p$-fractional subLaplace equation arising in many diverse contexts, such as quantum mechanics, image segmentation models, ferromagnetic analysis and so on. Let us firstly pay attention to the linear scenario, i.e., $p=2$. This kind of problems can be regarded as an extension of the conformally invariant fractional subLaplacian $\left(-\Delta_{\mathbb{H}^n}\right)^s$ in $\mathbb{H}^n$ proposed initially in \cite{BFM13} by the spectral formula
$$
\left(-\Delta_{\mathbb{H}^n}\right)^s:= {2^s}{\left| T \right|^s}\frac{{\Gamma \left( { - \frac{1}{2}{\Delta _H}{{\left| T \right|}^{ - 1}} + \frac{{1 + s}}{2}} \right)}}{{\Gamma\left( { - \frac{1}{2}{\Delta _H}{{\left| T \right|}^{ - 1}} + \frac{{1 - s}}{2}} \right)}},\;\;s \in \left( {0,1} \right),
$$
where $s\in(0,1)$, $\Gamma(\cdot)$ is the Euler Gamma function, $T$ is the vertical vector field, and $\Delta_{\mathbb{H}^n}$ is the typical Kohn-Spencer subLaplacian on $\mathbb{H}^n$. Subsequently, Roncal, Thangavelu \cite{RT16} demonstrated the representation as below
\begin{equation}\label{D2}
\left(-\Delta_{\mathbb{H}^n}\right)^su(\xi):=
C(n,s)\mathrm{P.V.}\int_{{\mathbb{H}^n}} {\frac{{u\left( \xi  \right) - u\left( \eta  \right)}}{{\| {{\eta ^{ - 1}} \circ \xi } \|_{{\mathbb{H}^n}}^{Q + 2s}}}d\eta },   \quad \xi\in\mathbb{H}^n
\end{equation}
holds true for $C(n,s)>0$ depending only on $n,s$. During the last decade, several aspects of the fractional operator of the type \eqref{D2} have been investigated extensively, such as  Hardy and uncertainty inequalities on stratified Lie groups \cite{CCR15}, Sobolev and Morrey-type embedding theory for fractional Sobolev space $H^s(\mathbb{H}^n)$ \cite{AM18}, Harnack and H\"{o}lder estimates in Carnot groups \cite{FF15}, Liouville-type theorem \cite{CT16}. One can refer to \cite{FMPPS18,FGMT15,GT22,GT21} and references therein for more results on the linear case. Regarding the nonlinear analogue to \eqref{D2}, that is, the $p$-growth scenario is considered this time. For what concerns the regularity properties of weak solutions to the fractional $p$-subLaplace equations on the Heisenberg group, Manfredini, Palatucci,  Piccinini and Polidoro  \cite{MPP23} first established the interior boundedness and H\"{o}lder continuity via employing the De Giorge-Nash-Moser iteration; see also \cite{PP22} for the nonlocal Harnack inequality, where the asymptotic behaviour of fractional linear operator was proved as well. In addition, as for the obstacle problems connected with the nonlocal $p$-subLaplacian, we refer to \cite{P22} in which Piccinini studied systematically solvability, semicontinuity, boundedness and H\"{o}lder regularity up to the boundary for weak solutions. To some extent, these results mentioned above extend to the Heisenberg framwork many important counterparts in the fractional Euclidean setting in \cite{DKP14,DKP16,IMS16,KKP16}. More interesting estimates or fundamental functional inequalities can be found, for instance, in \cite{KS18,KS20}. %DLV21,,WD20

Equation \eqref{eqy11} could be viewed naturally as the nonlocal version of the classical double phase problem of the following type
\begin{equation}\label{D3}
-\mathrm{div}(|\nabla u|^{p-2}\nabla u+a(x)|\nabla u|^{q-2}\nabla u)=0 \quad \text{in } \Omega.
\end{equation}
Within the Euclidean context, the regularity theory on weak solutions to \eqref{D3} or minimizers of corresponding functionals has been extensively developed, beginning with the pioneering papers of Colombo and Mingione \cite{CM15a,CM15b}. Under $a\in L^\infty_{\rm loc}(\Omega)$ and, $p\le q\le \frac{np}{n-p}$ if $p<n$, or $p\le q<\infty$ if $p\ge n$, local boundedness for $u$ was shown; and further under $u\in L^\infty_{\rm loc}(\Omega)$, $a\in C^{0,\alpha}_{\rm loc}(\Omega)$ and $p\le q\le p+\alpha$, H\"{o}lder continuity of  $u$ was obtained as well, see, e.g., \cite{CM15b,CMM15}.

Very recently, the investigation of nonlocal problems with nonstandard growth, especially of those with $(p,q)$-growth condition, has been attracting increasing attention, however only in the fractional Euclidean spaces. In this respect, De Filippis, Palatucci \cite{DP19} first introduced nonlocal double phase equations of the form \eqref{eqy11}  in the Euclidean spaces, and established H\"{o}lder continuity for bounded viscosity solutions. Weak theory on this class of nonlocal equations was rapidly explored in hot pursuit, for example, \cite{SM22} for self-improving inequalities on bounded weak solutions, \cite{FZ23} for H\"{o}lder regularity and relationship between weak and viscosity solutions in the differentiability exponents $s\ge t$, \cite{BOS22} for H\"{o}lder property with weaker assumption on solutions in the case $s<t$, \cite{Gia} for the sharp H\"{o}lder index and the parabolic version. Concerning more regularity and related results for nonlocal problems possessing nonuniform growth, one can see \cite{BKO,FZ,PT23,GKS21,CKW22} and references therein.

In particular, we would like to mention that Palatucci and Piccinini et. al. in a series of papers \cite{MPP23,PP22,PP23} proposed the open problems about the regualrity of solutions to the so-called nonlocal doubel phase equation  in the Heisenberg group $\mathbb{H}^n$.  In this paper,  influenced by the works \cite{DKP16,BOS22} we answer this question and develop the local regularity theory for the weak solutions of such equations in the Heisenberg group $\mathbb{H}^n$, including  the boundedness and H\"{o}lder continuity of solutions.  The main difficulties which are different from the previous ones are mainly two parts. One is that Eq. \eqref{eqy11}  not only possesses the nonlocal feature of the embraced integro-differential operators and the noneuclidean geometrical structure of the Heisenberg group, but also inherits the typical characteristics exhibited by the (local) double phase problems due to the $(p,q)$-growth condition and the presence of the nonnegative variable coefficient $a$. The other one is that the existing Sobolev embedding theorem,  Lemma \ref{Le102},  cannot be applied to our setting directly. To overcome this point, we have to establish a suitable Sobolev-Poincar\'{e} type inequality on balls in the Heisenberg group $\mathbb{H}^n$. It may be of independent interest and potential applications when investigating regularity properties for some other nonlocal equations  in the Heisenberg group. These difficulties  make the current study more challenging than the fractional $p$-subLaplacian case.

Now we are in a position to state our main contributions.  We first collect some notations, definitions as well as assumptions. Let $s,\;t$ and $p,\;q$ satisfy
\begin{equation}\label{eqy18}
1<p\le q<\infty, \quad 0<s\le t<1,%  (s,t\in(0,1)),
\end{equation}
and the coefficient $a:{\mathbb{H}^n}\times {\mathbb{H}^n} \to {\mathbb{R}^+}$ fulfill
\begin{equation}\label{eqy19}
  0 \le a\left( {\xi ,\eta } \right) = a\left( {\eta ,\xi } \right) \le {\| a \|_{{L^\infty }}},\;\;\;\;\xi ,\eta  \in {\mathbb{H}^n}
\end{equation}
and
\begin{equation}\label{eqy110}
\left| {a\left( {\xi ,\eta } \right) - a\left( {\xi ',\eta '} \right)} \right| \le {\left[ a \right]_\alpha }{\left( {{\| {{{\xi '}^{ - 1}} \circ \xi } \|_{{\mathbb{H}^n}}} + {\| {{{\eta '}^{ - 1}} \circ \eta } \|_{{\mathbb{H}^n}}}} \right)^\alpha }
\end{equation}
for $\left( {\xi ,\eta } \right),\left( {\xi ',\eta '} \right) \in {\mathbb{H}^n} \times {\mathbb{H}^n}$ and $\alpha  \in \left( {0,1} \right]$.

For convenience, we introduce the following notations:
\begin{equation*}\label{eqy21}
 H\left( {\xi ,\eta ,\tau } \right): = \frac{{{\tau ^p}}}{{\| {{\eta ^{ - 1}} \circ \xi } \|_{{\mathbb{H}^n}}^{sp}}} + a\left( {\xi ,\eta } \right)\frac{{{\tau ^q}}}{{\| {{\eta ^{ - 1}} \circ \xi } \|_{{\mathbb{H}^n}}^{tq}}}, \quad \xi ,\eta  \in {\mathbb{H}^n}\;\hbox{and}\;\;\tau  > 0,
\end{equation*}
and
\[{J_l}( {{\tau _1} - {\tau _2}} ) = {| {{\tau _1} - {\tau _2}}|^{l - 2}}( {{\tau _1} - {\tau _2}} )\]
with ${\tau _1},{\tau _2} \in \mathbb{R}$ and $l \in \{p,q\}$, and
\begin{equation*}\label{eqy22}
\rho \left( {u ;\Omega } \right) = \int_\Omega  {\int_\Omega  {H\left( {\xi ,\eta ,|u(\xi) -u(\eta)|} \right)\frac{{d\xi d\eta }}{{\| {{\eta ^{ - 1}} \circ \xi } \|_{{\mathbb{H}^n}}^Q}}} }
\end{equation*}
for every measurable set $\Omega \subset {\mathbb{H}^n}$ and $u: \Omega \to \mathbb{R}$. A function space related to weak solutions to \eqref{eqy11} is defined as
$${\mathcal{A}}\left( \Omega  \right): = \left\{ {u :{\mathbb{H}^n} \to \mathbb{R}: u {|_\Omega } \in {L^p}\left( \Omega  \right)\;\;\hbox{and}\;\; \iint_{\mathcal{C}_\Omega} H(\xi ,\eta ,|u(\xi)-u(\eta)|)\frac{d\xi d\eta }{\|\eta^{-1}\circ\xi\|_{\mathbb{H}^n}^Q} < \infty } \right\},$$
where %${{\mathcal{C}}_\Omega }$ is defined as
\begin{equation*}\label{eqy14}
 {{\mathcal{C}}_\Omega }:= \left( {{\mathbb{H}^n} \times {\mathbb{H}^n}} \right)\backslash \left( {\left( {{\mathbb{H}^n}\backslash \Omega } \right) \times \left( {{\mathbb{H}^n}\backslash \Omega } \right)} \right).
\end{equation*}
Additionally, in view of the nonlocal nature of this problem, we need define a tail space
$$
L^{q-1}_{sp}(\mathbb{H}^n):=\left\{u\in L_{\rm loc}^{q-1}(\mathbb H^n) : \int_{\mathbb{H}^n}\frac{|u(\xi)|^{q-1}}{(1+\|\xi\|_{\mathbb{H}^n})^{Q+sp}}
\,d\xi<\infty\right\}
$$
and the nonlocal tail
$$
T\left( {u;\xi_0, r} \right): = \int_{{\mathbb{H}^n}\backslash {B_r}} {\left( {\frac{{{{\left| {u \left( \xi  \right)} \right|}^{p - 1}}}}{{\| {\xi _0^{ - 1} \circ \xi } \|_{{\mathbb{H}^n}}^{Q + sp}}} + {{\| a \|}_{{L^\infty }}}\frac{{{{\left| {u \left( \xi  \right)} \right|}^{q - 1}}}}{{\| {\xi _0^{ - 1} \circ \xi } \|_{{\mathbb{H}^n}}^{Q + tq}}}} \right)\;d\xi .} 
$$
We can notice that the quantity $T$ is finite if $u\in L^{q-1}_{sp}(\mathbb{H}^n)$. 

We now give the definition of weak solutions to \eqref{eqy11}.

\begin{definition}[weak solution]\label{De1}
  If $u \in {\mathcal{A}}\left( \Omega  \right)$ satisfies
 \begin{align} \label{eqy111}
    \iint_{\mathcal{C}_\Omega}\Bigg[\frac{J_p(u(\xi)-u(\eta))(\varphi(\xi)-\varphi(\eta))}{\|\eta^{-1}\circ\xi\|_{\mathbb{H}^n}^{Q+sp}}  + a(\xi,\eta)\frac{J_q(u(\xi)-u(\eta))(\varphi(\xi)-\varphi(\eta))}{\|\eta^{-1}\circ\xi\|_{\mathbb{H}^n}^{Q+tq}}\Bigg]\,d\xi d\eta   = 0
 \end{align}
 for every $\varphi  \in {\mathcal{A}}\left( \Omega  \right)$ with $\varphi=0$ a.e. in ${\mathbb{H}^n}\backslash \Omega $, then we call $u$ a weak solution to \eqref{eqy11}.
\end{definition}
%where $c(n, s)$ is a positive constant which depends only on $n$ and $s$.  |u(\xi)-u(\eta)|^{p-2}  |u(\xi)-u(\eta)|^{p-2}

Note that $u\in {\mathcal{A}}( \Omega)$ implies $u\in{HW}^{s,p}\left( \Omega  \right)$, i.e., ${\mathcal{A}}\left( \Omega  \right) \subset {HW}^{s,p}\left( \Omega  \right)$. Hence in this work, we only consider the case $sp \le Q$. Otherwise, the complementary scenario $sp>Q$ ensures the local boundedness and H\"{o}lder continuity directly because of the fractional Morrey embedding in the Heisenberg group \cite{AM18}. Our main results are stated as follows. The first one is local boundedness of weak solutions.

\begin{theorem}\label{Th11}
Let the conditions \eqref{eqy18} and \eqref{eqy19} be in force. If
\begin{equation}\label{eqy112}
\left\{ \begin{array}{l}
p \le q \le \frac{{Qp}}{{Q - sp}}:=p_s^*\;\;\;\;\;\;when\;\;sp < Q,\\
p \le q < \infty \;\;\;\;\;\;\;\;\;\;\;\;\;\;\;\;\;\;\;when\;\;sp \ge Q,
\end{array} \right.\;
\end{equation}
then every weak solution $u\in {\mathcal{A}}(\Omega ) \cap L_{sp}^{q - 1}\left({{\mathbb{H}^n}}\right)$ to \eqref{eqy11} is locally bounded in $\Omega$.
\end{theorem}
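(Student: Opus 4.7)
My plan is to run a De Giorgi iteration on the positive truncations $w_k:=(u-k)_+$, in the spirit of \cite{MPP23,BOS22,DKP16} but carefully adapted to the Heisenberg setting and to the $(p,q)$-growth of the operator. The complementary regime $sp>Q$ is already covered by the fractional Morrey embedding of \cite{AM18} noted in the excerpt, so the substantive case is $sp\le Q$. Fix $\xi_0\in\Omega$ and concentric balls $B_{r'}\subset B_r\Subset\Omega$. First I would test Definition \ref{De1} with $\varphi=w_k\psi^{q}$, where $\psi$ is a Lipschitz cut-off with $\mathrm{supp}\,\psi\subset B_r$, $\psi\equiv 1$ on $B_{r'}$ and $|\nabla_{\mathbb{H}^n}\psi|\le C/(r-r')$. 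Using the symmetry $a(\xi,\eta)=a(\eta,\xi)$ to re-symmetrise the integrand, the standard algebraic identities for $J_p$ and $J_q$ (as in \cite{BOS22,SM22}), and splitting the double integral into $B_r\times B_r$ plus its complement, the target is a Caccioppoli-type inequality of the form
\[
\iint_{B_{r'}\times B_{r'}}\!\!H\bigl(\xi,\eta,|(w_k\psi)(\xi)-(w_k\psi)(\eta)|\bigr)\,\frac{d\xi\,d\eta}{\|\eta^{-1}\circ\xi\|_{\mathbb{H}^n}^{Q}}\le \frac{C}{(r-r')^{q}}\,\rho(w_k;B_r)+C\,T(w_k;\xi_0,r)\!\int_{B_r}\!w_k\,d\xi,
\]
each term of which stays finite thanks to $u\in L^{q-1}_{sp}(\mathbb{H}^n)$.

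\textbf{Iteration.} Next I would convert this energy bound into a superlinear recursion for $A_j:=\int_{B_{r_j}}w_{k_j}^{q}\,d\xi$ along the sequences $k_j=k(1-2^{-j})$ and $r_j=(1+2^{-j})r/2$. Applying the new Sobolev--Poincar\'{e} inequality on balls of $\mathbb{H}^n$ proved earlier in the paper to $w_{k_j}\psi_j$ upgrades the left-hand side of the Caccioppoli estimate into control of $\|w_{k_j}\psi_j\|_{L^{p_s^{*}}(B_{r_j})}$, and the hypothesis $q\le p_s^{*}$ in \eqref{eqy112} is precisely what is needed so that the $q$-phase contribution still fits inside the Sobolev exponent of the $p$-phase. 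A Chebyshev argument on $\{u>k_{j+1}\}\cap B_{r_j}$ then produces a recursion
\[
A_{j+1}\le C\,b^{\,j}\,k^{-\sigma}\,A_{j}^{1+\delta}
\]
for some $b>1$ and $\sigma,\delta>0$, and the classical fast-geometric-convergence lemma will force $A_j\to 0$ once $k$ is taken larger than an explicit threshold depending only on $\|u\|_{L^{q}(B_r)}$, $T(u;\xi_0,r)$ and the structural constants. This yields an upper bound for $u$ on $B_{r/2}$; applying the entire scheme to $-u$ produces the matching lower bound.

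\textbf{Main obstacle.} The hard part will be the Caccioppoli inequality. The non-Abelian group law makes the kernel $\|\eta^{-1}\circ\xi\|_{\mathbb{H}^n}^{-Q-tq}$ asymmetric in a way that obstructs the naive symmetrisation used in the fractional Euclidean case, and the $(p,q)$-growth forces a careful bookkeeping of the variable coefficient $a(\xi,\eta)$ when $w_k\psi$ is split across the cut-off; moreover, cut-offs on $\mathbb{H}^n$ admit only \emph{horizontal} gradient bounds, which makes producing the clean $(r-r')^{-q}$ scaling delicate. To absorb the $q$-phase term into Sobolev one truly needs the new Sobolev--Poincar\'{e} inequality proved in the paper, since the embedding Lemma \ref{Le102} does not apply in this geometry, and this is precisely the obstacle that distinguishes the present analysis from the fractional $p$-subLaplacian case treated in \cite{MPP23,PP22}.
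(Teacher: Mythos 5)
Your overall strategy is the one the paper follows: test with $(u-k)_+\psi^q$, derive a Caccioppoli-type estimate (Lemma \ref{Le42}), feed it into the Sobolev--Poincar\'e inequality on balls (Proposition \ref{Pro104}, packaged as Lemma \ref{Le24}), and close a fast-geometric recursion via Lemma \ref{Le27}; the tail is finite because $u\in L^{q-1}_{sp}(\mathbb{H}^n)$, and $q\le p_s^*$ enters exactly where you say it does. However, the one inequality you actually display is not usable as written. You put $\frac{C}{(r-r')^{q}}\rho(w_k;B_r)$ on the right-hand side, where $\rho(w_k;B_r)$ is the full Gagliardo-type energy $\iint_{B_r\times B_r}H(\xi,\eta,|w_k(\xi)-w_k(\eta)|)\,\|\eta^{-1}\circ\xi\|_{\mathbb{H}^n}^{-Q}\,d\xi\,d\eta$. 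That quantity is of the same order as the left-hand side (on $B_{r'}$, where $\psi\equiv1$, the two energies coincide), so the estimate cannot be converted into a superlinear recursion for $A_j=\int_{B_{r_j}}w_{k_j}^{q}\,d\xi$: nothing controls $\rho(w_{k_j};B_{r_j})$ in terms of $A_j$. The whole point of the Caccioppoli step is to trade the nonlocal energy for zeroth-order data; the commutator term must be bounded using $|\psi(\xi)-\psi(\eta)|\le C\|\eta^{-1}\circ\xi\|_{\mathbb{H}^n}/(r-r')$ and the kernel then integrated out, producing $(r-r')^{-p}r^{(1-s)p}\int_{B_r}w_k^{p}+\|a\|_{L^\infty}(r-r')^{-q}r^{(1-t)q}\int_{B_r}w_k^{q}$ plus the tail term. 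This is what Lemma \ref{Le42}, combined with the computation in the proof of Theorem \ref{Th11}, delivers.

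Two smaller remarks. First, the paper iterates $y_i=\int_{A^+(k_i,r_i)}H_0((u-k_i)_+)\,d\xi$ with $H_0(\tau)=\tau^p+\|a\|_{L^\infty}\tau^q$, keeping both phases; the $p$-power is what drives the Chebyshev estimate $|A^+(k_i,r_i)|\le k_0^{-p}2^{ip}y_{i-1}$, and carrying only the $q$-power makes the interplay with Lemma \ref{Le24} (whose right-hand side consists of the $p$-energy raised to the powers $1$ and $q/p$, weighted by measure-of-support factors) more awkward, though not impossible. Second, your stated ``main obstacle'' is not one: the Kor\`anyi quasi-norm satisfies $\|\zeta^{-1}\|_{\mathbb{H}^n}=\|\zeta\|_{\mathbb{H}^n}$, hence $\|\eta^{-1}\circ\xi\|_{\mathbb{H}^n}=\|\xi^{-1}\circ\eta\|_{\mathbb{H}^n}$ and the kernel is symmetric in $(\xi,\eta)$, so the Euclidean symmetrization goes through verbatim; the cut-off only ever enters through $|\psi(\xi)-\psi(\eta)|\lesssim\|\eta^{-1}\circ\xi\|_{\mathbb{H}^n}\sup|\nabla_H\psi|$, which is precisely what a horizontal gradient bound provides.
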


Additionally, we can conclude H\"{o}lder regularity of weak solutions to \eqref{eqy11} via supposing $a(\cdot,\cdot)$ is H\"{o}lder continuous and the distance between $q$ and $p$ is small. For simplicity, we denote
$$
\mathrm{\mathbf{data}}:=\mathrm{\mathbf{data}}(n,p,q,s,t,\alpha,[a]_\alpha,\|a\|_{L^\infty},\|u\|_{L^\infty(\Omega')})
$$
as the set of basic parameters intervening in the problem, where $\Omega'\subset\subset\Omega$.

\begin{theorem}\label{Th12}
Let the conditions \eqref{eqy18}--\eqref{eqy110} with $tq\le sp+\alpha$ be in force. If weak solution $u\in {\mathcal{A}}(\Omega ) \cap L_{sp}^{q - 1}\left({{\mathbb{H}^n}}\right)$ to \eqref{eqy11} has local boundedness in $\Omega$, then it is locally H\"{o}lder continuous as well, that is, for any subset $\Omega'\subset\subset\Omega$, $u$ belongs to $C^{0,\beta}_{\rm loc}(\Omega')$ with some $\beta\in\left(0,\frac{sp}{q-1}\right)$ depending on $\mathrm{\mathbf{data}}$.
\end{theorem}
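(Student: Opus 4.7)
Following the nonlocal De Giorgi scheme of \cite{DKP16,BOS22} transplanted to the subRiemannian setting, the plan is to prove geometric decay of oscillations along a dyadic family of shrinking balls. Fix $\Omega'\subset\subset\Omega$, $\xi_0\in\Omega'$, a small radius $R$ and $\sigma\in(0,1/4)$; set $B_j:=B_{\sigma^j R}(\xi_0)$ and $\omega_j:=M\sigma^{j\beta}$ with $\beta\in(0,sp/(q-1))$ to be chosen and $M$ large enough to absorb both $\|u\|_{L^\infty(B_0)}$ and the initial tail $T(u;\xi_0,R)$. The target is to prove by induction on $j$ the containment $\mathrm{osc}_{B_j}u\le\omega_j$; passing to $r\mapsto R\sigma^j$ then delivers $u\in C^{0,\beta}_{\rm loc}(\Omega')$.

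\textbf{Local estimates and reduction to the $p$-phase.} Two inequalities power the induction. The first is a Caccioppoli-type bound for the truncations $(u-k)_\pm$ against a Lipschitz cutoff, obtained by testing \eqref{eqy111} with the standard double-phase test function as in \cite{MPP23}; it controls the $H$-energy of $(u-k)_\pm$ by lower-order $L^p$ and $L^q$ norms of $(u-k)_\pm$ plus the tail $T((u-k)_\pm;\xi_0,r)$. The second is a logarithmic estimate for $\log\!\bigl((u-k)_+ + d\bigr)$ in the spirit of \cite{DKP14}, obtained by testing with $\varphi=\eta^q((u-k)_+ + d)^{1-p}$. At this stage the assumption $tq\le sp+\alpha$ plays the pivotal role: combined with the Hölder condition \eqref{eqy110} on $a$, the $q$-phase kernel $a(\xi,\eta)\|\eta^{-1}\circ\xi\|_{\mathbb{H}^n}^{-Q-tq}$ is dominated by the $p$-phase kernel on $B_j\times B_j$ up to a multiplicative constant depending on $\mathrm{\mathbf{data}}$, so that on each small ball the double-phase functional may be treated like the fractional $p$-subLaplacian whose De Giorgi theory is already known from \cite{MPP23,PP22}.

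\textbf{De Giorgi alternative.} At the half-oscillation level $k_j:=\inf_{B_j}u+\omega_j/2$, one of the sets $\{u>k_j\}$ or $\{u<k_j\}$ occupies at least half of an intermediate ball $\widetilde B_j\subset B_j$; assume the latter. The logarithmic estimate upgrades this measure datum to a smallness statement $|\{u\ge k_j+\omega_j/2^m\}\cap B_{j+1}|\le\delta|B_{j+1}|$ for some integer $m=m(\mathrm{\mathbf{data}})$ and $\delta$ as small as needed. A level-set iteration based on the Caccioppoli inequality and on the new Sobolev--Poincaré inequality on Heisenberg balls (which substitutes for the inapplicable global embedding Lemma~\ref{Le102}) then yields $\sup_{B_{j+1}}u\le\sup_{B_j}u-\omega_j/2^{m+1}$, provided the tails at the scale $\sigma^j R$ are controlled by $\omega_j$. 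This last requirement fixes the admissible range of $\beta$: the exponent $\beta<sp/(q-1)$ is precisely what makes the tail contributions summable along the dyadic scales, so that the induction closes with a single geometric loss factor $1-2^{-m-1}$.

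\textbf{Main obstacles.} The core difficulty is twofold. First, the $(p,q)$-structure must be kept compatible with a De Giorgi iteration driven by a single Sobolev-type inequality: the Caccioppoli bound mixes the two phases, and the reduction $tq\le sp+\alpha$ has to be applied carefully so that the residual $q$-terms generated by the Hölder continuity of $a$ are absorbed by $\|u\|_{L^\infty}$ and by the $p$-energy, without spoiling the scaling needed for the Sobolev--Poincaré step. Second, the nonlocal tail does not shrink on its own: one must propagate the tail bound along the iteration by incorporating the oscillation information from all previous scales into the definition of $\omega_j$, checking that the contributions from $\mathbb{H}^n\setminus B_j$ remain dominated by $\omega_j$ at each stage; this is the most delicate book-keeping step and the one in which the choice of $\beta$ and of the reference constants $M,\sigma$ must be balanced against the data listed in $\mathrm{\mathbf{data}}$.
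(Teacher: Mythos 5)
The skeleton of your argument — dyadic balls $B_j$, oscillation bound $\omega_j=M\sigma^{j\beta}$ with $\beta<\frac{sp}{q-1}$ chosen so the tail contributions are summable, a Caccioppoli inequality plus a logarithmic estimate plus the new Sobolev--Poincar\'e inequality on balls, then a measure-theoretic alternative upgraded to a pointwise bound by a level-set iteration — is exactly the route the paper takes (Lemma \ref{Le51}, Corollary \ref{Co52}, Theorem \ref{thm 5.1}). However, one step that the rest of your proof leans on is genuinely wrong: the claimed ``reduction to the $p$-phase''.

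You assert that $tq\le sp+\alpha$ together with \eqref{eqy110} implies that the $q$-phase kernel $a(\xi,\eta)\|\eta^{-1}\circ\xi\|_{\mathbb{H}^n}^{-Q-tq}$ is dominated by the $p$-phase kernel on $B_j\times B_j$ up to a constant, so that on each small ball the equation may be treated as a fractional $p$-subLaplace equation whose De Giorgi theory is already known. Such a domination would require $a(\xi,\eta)\,|u(\xi)-u(\eta)|^{q-p}\le C\|\eta^{-1}\circ\xi\|_{\mathbb{H}^n}^{tq-sp}$; the right-hand side degenerates near the diagonal while $a$ need not vanish there (take $a\equiv 1$ and any admissible $t,q$ with $tq>sp$). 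What $tq\le sp+\alpha$ actually buys is only $a^+_{2r}\le a^-_{2r}+c[a]_\alpha r^\alpha$, i.e.\ the supremum of $a$ on a ball can be replaced by its infimum at the price of an extra $p$-phase term multiplied by $\|u\|_{L^\infty}^{q-p}$ (this is \eqref{eq55} and the opening step of Lemma \ref{Le107}); the residual $q$-phase term $a^-\tau^q/r^{tq}$ survives and must be carried through every estimate. Consequently your logarithmic test function $\varphi=\eta^q((u-k)_++d)^{1-p}$ — the pure $p$-phase choice — cannot work: paired with the $q$-phase part of the operator it does not produce the logarithm with constants controlled by $\mathrm{\mathbf{data}}$. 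The paper instead tests with $\phi^q/g_{2r}(u+d)$, where $g_{2r}(\tau)=\tau^{p-1}/(2r)^{sp}+a^+_{2r}\tau^{q-1}/(2r)^{tq}$, and keeps both phases in the Caccioppoli inequality and in the Sobolev--Poincar\'e lemma, with $a^+$ on the strong side and $a^-$ on the weak side. If your reduction were valid, the theorem would be an immediate corollary of the fractional $p$-subLaplacian results of \cite{MPP23,PP22}, which it is not. The remainder of your outline (the two alternatives, smallness of the sublevel set from the log estimate, the iteration, the tail bookkeeping and the calibration of $\sigma$ and $\beta$) matches the paper and would go through once the double-phase structure is retained throughout.
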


Putting these two theorems above, H\"{o}lder continuity is immediately obtained without local boundedness assumption under the intersecting conditions.

This paper is organized as follows. In Section \ref{Section 2}, we introduce the Heisenberg group and function spaces, and then deduce some needful Sobolev embedding theorems. Section \ref{Section 4} is dedicated to proving local boundedness of weak solutions by the Caccioppoli-type estimate. At last, we shall show that the locally bounded weak solutions to \eqref{eqy11} are H\"{o}lder continuous via establishing Logarithmic-type inequality in Section \ref{Section 5}.
%with $\Gamma \left( \xi  \right): = \int_0^\infty  {{t^{\xi  - 1}}{e^{ - t}}dt} $ being the Euler Gamma function, ${\Delta _H}$ being the classical Kohn-Spencer subLaplacian in $\mathbb{H}^n$, and $T = \partial t$ being the vertical vector field.

\section{Functional setting}
\label{Section 2}

In this section, we introduce the Heisenberg group ${{\mathbb{H}}^n}$ and some function spaces, and establish several important Sobolev embedding results. The Euclidean space ${{\mathbb{R}}^{2n + 1}}\;(n \ge 1)$ with the group multiplication
\[\;\xi \circ \eta = \left( {{x_1} + {y_1},{x_2} + {y_2}, \cdots ,{x_{2n}} + {y_{2n}},\tau + \tau' + \frac{1}{2}\sum\limits_{i = 1}^n {\left( {{x_i}{y_{n + i}} - {x_{n + i}}{y_i}} \right)} } \right),\]
where $\xi = \left( {{x_1},{x_2}, \cdots ,{x_{2n}},\tau} \right),$ $\eta = \left( {{y_1},{y_2}, \cdots ,{y_{2n}},\tau'} \right) \in {{\mathbb{R}}^{2n+1}},$ leads to the Heisenberg group ${{\mathbb{H}}^n}$. The left invariant vector field on ${{\mathbb{H}}^n}$ is of the form
\begin{equation*}\label{eq112}
  {X_i} = {\partial _{{x_i}}} - \frac{{{x_{n + i}}}}{2}{\partial _\tau},\;{X_{n + i}} = {\partial _{{x_{n + i}}}} + \frac{{{x_i}}}{2}{\partial _\tau},\quad 1 \le i \le n
\end{equation*}
and a non-trivial commutator is
\begin{equation*}\label{eq113}
  T = {\partial _\tau} = \left[ {{X_i},{X_{n + i}}} \right] = {X_i}{X_{n + i}} - {X_{n + i}}{X_i},~1 \le i \le n.
\end{equation*}
We call that ${X_1},{X_2}, \cdots ,{X_{2n}}$ are the horizontal vector fields on ${{\mathbb{H}}^n}$ and $T$ the vertical vector field. Denote the horizontal gradient of a smooth function $u$ on ${{\mathbb{H}}^n}$ by
\begin{equation*}\label{eq114}
{\nabla _H}u = \left( {{X_1}u,{X_2}u, \cdots ,{X_{2n}}u} \right).
\end{equation*}

The Haar measure in ${{\mathbb{H}}^n}$ is equivalent to the Lebesgue measure in ${{\mathbb{R}}^{2n+1}}$. We denote the Lebesgue measure of a measurable set $E \subset {{\mathbb{H}}^n}$ by $\left| E \right|$. For $\xi = \left( {{x_1},{x_2}, \cdots ,{x_{2n}},\tau} \right),$ we define its module as
\[{\| \xi \|_{{{\mathbb{H}}^n}}} = {\left( {{{\left( {\sum\limits_{i = 1}^{2n} {{x_i}^2} } \right)}^2} + {\tau^2}} \right)^{\frac{1}{4}}}.\]
The Carnot-Carath\'{e}odary metric between two points $\xi$ and $\eta$ in ${{\mathbb{H}}^n}$ is the shortest length of the horizontal curve joining them, denoted by $d(\xi,\eta)$. The C-C metric is equivalent to the Kor\`{a}nyi metric: i.e. $d\left( {\xi,\eta} \right) \sim {\| {{\xi^{ - 1}}\circ \eta} \|_{{{\mathbb{H}}^n}}}$. The ball
\begin{equation}\label{eq111}
  {B_r }\left( \xi_0 \right) = \left\{ {\xi \in {{\mathbb{H}}^n}:d\left( {\xi,\xi_0} \right) < r } \right\}
\end{equation}
is defined by the C-C metric $d$. When not important or clear from the context, we will omit the center as follows: $B_r:=B_r( \xi_0)$.

%The homogeneous dimension of ${{\mathbb{H}}^n}$ is $Q = 2n + 2$.
Let $1 \le p < \infty ,\;s \in \left( {0,1} \right)$, and $v:{{\mathbb{H}}^n} \to {\mathbb{R}}$ be a measurable function. The Gagliardo semi-norm of $v$ is defined as
\[{\left[ v \right]_{H{W^{s,p}}\left( {{{\mathbb{H}}^n}} \right)}} = {\left( {\int_{{{\mathbb{H}}^n}} {\int_{{{\mathbb{H}}^n}} {\frac{{{{\left| {v\left( \xi  \right) - v\left( \eta  \right)} \right|}^p}}}{{\| {{\eta ^{ - 1}} \circ \xi } \|_{{{\mathbb{H}}^n}}^{Q + sp}}}\,d\xi } d\eta } } \right)^{\frac{1}{p}}},\]
and the fractional Sobolev spaces $H{W^{s,p}}\left( {{{\mathbb{H}}^n}} \right)$ on the Heisenberg group are defined as
\[H{W^{s,p}}\left( {{{\mathbb{H}}^n}} \right) = \left\{ {v \in {L^p}\left( {{{\mathbb{H}}^n}} \right):{{\left[ v \right]}_{H{W^{s,p}}\left( {{{\mathbb{H}}^n}} \right)}} < \infty } \right\},\]
endowed with the natural fractional norm
\[{\| v \|_{H{W^{s,p}}\left( {{{\mathbb{H}}^n}} \right)}} = {\left( {\| v \|_{{L^p}\left( {{{\mathbb{H}}^n}} \right)}^p + \left[ v \right]_{H{W^{s,p}}\left( {{{\mathbb{H}}^n}} \right)}^p} \right)^{\frac{1}{p}}}.\]
For any open set $\Omega  \subset {{\mathbb{H}}^n}$, we can define similarly fractional Sobolev spaces $H{W^{s,p}}\left( \Omega  \right)$ and fractional norm ${\| v \|_{H{W^{s,p}}\left( \Omega  \right)}}$. The space $HW_0^{s,p}\left( \Omega  \right)$ is the closure of $C_0^\infty \left( \Omega  \right)$ in $H{W^{s,p}}\left( \Omega  \right)$. Throughout this paper, we denote a generic positive constant as $c$ or $C$. If necessary, relevant dependencies on parameters will be illustrated by parentheses, i.e., $c=c(n,p)$ means that $c$ depends on $n,p$. Now we recall the fractional Poincar\'{e} type inequality and Sobolev embedding in the Heisenberg group; see \cite[Proposition 2.7]{P22} and \cite[Theorem 2.5]{KS18}, respectively.

\begin{lemma}[Poincar\'{e} type inequality]\label{Le101}
Let $ p\ge1,\;s \in \left( {0,1} \right)$ and $v \in HW^{s,p}( B_r)$. Then we have
\[\int_{{B_r}} {{{\left| {v - {{\left( v \right)}_r}} \right|}^p}\,d\xi }  \le c{r^{sp}}\int_{{B_r}} {\int_{{B_r}} {\frac{{{{\left| {v\left( \xi  \right) - v\left( \eta  \right)} \right|}^p}}}{{\| {{\eta ^{ - 1}} \circ \xi } \|_{{\mathbb{H}^n}}^{Q + sp}}}\,d\xi d\eta } } ,\]
where $c=c(n,p)>0$, ${\left( v \right)_r} = \fint_{{B_r}} {vd\xi } $.% and ${B_r} \equiv {B_r}\left( {{\xi _0}} \right)$.
\end{lemma}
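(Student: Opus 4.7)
The plan is to use the standard Jensen plus diameter-bound argument that works in any metric-measure setting equipped with a doubling measure, adapted to the Heisenberg geometry. The key observation is that for $\xi,\eta\in B_r$, the Carnot-Carath\'eodory distance (and hence the equivalent Kor\'anyi norm $\|\eta^{-1}\circ\xi\|_{\mathbb{H}^n}$) is bounded above by $2r$ (up to the equivalence constant between the two metrics), which will allow us to insert the singular kernel essentially for free.

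First, using Jensen's inequality with respect to the average over $B_r$,
\begin{equation*}
|v(\xi)-(v)_r|^p = \left|\fint_{B_r}(v(\xi)-v(\eta))\,d\eta\right|^p \le \fint_{B_r}|v(\xi)-v(\eta)|^p\,d\eta.
\end{equation*}
Then I would integrate over $\xi\in B_r$ to obtain
\begin{equation*}
\int_{B_r}|v(\xi)-(v)_r|^p\,d\xi \le \frac{1}{|B_r|}\int_{B_r}\int_{B_r}|v(\xi)-v(\eta)|^p\,d\eta\, d\xi.
\end{equation*}

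Second, I would multiply and divide by $\|\eta^{-1}\circ\xi\|_{\mathbb{H}^n}^{Q+sp}$ inside the double integral. Since $\xi,\eta\in B_r$ implies $\|\eta^{-1}\circ\xi\|_{\mathbb{H}^n}\le cr$ for some $c$ depending only on the equivalence constant between the C-C and Kor\'anyi metrics, we have $\|\eta^{-1}\circ\xi\|_{\mathbb{H}^n}^{Q+sp}\le c r^{Q+sp}$. Combined with $|B_r|\sim r^Q$ (from the $Q$-homogeneity of the Haar measure), this yields the factor $r^{Q+sp}/|B_r|\sim r^{sp}$ in front of the double integral, producing exactly the right-hand side of the asserted inequality.

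There is no real obstacle here: the argument is purely measure-theoretic and uses only (i) Jensen's inequality, (ii) the elementary diameter bound on $\|\eta^{-1}\circ\xi\|_{\mathbb{H}^n}$ for $\xi,\eta\in B_r$, and (iii) the $Q$-homogeneous scaling of the Haar measure on $\mathbb{H}^n$. The mild subtlety is only bookkeeping of the constant $c=c(n,p)$, which absorbs the metric-equivalence constant between the C-C and Kor\'anyi metrics as well as the volume constant $|B_1|$; no finer Heisenberg structure (e.g.\ chaining of balls, Poincar\'e inequality in the horizontal direction) is needed because the right-hand side already controls the full fractional Gagliardo energy on $B_r\times B_r$.
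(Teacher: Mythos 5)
Your proof is correct: Jensen's inequality, the diameter bound $\|\eta^{-1}\circ\xi\|_{\mathbb{H}^n}\le cr$ for $\xi,\eta\in B_r$ (valid since the C--C and Kor\'anyi metrics are equivalent), and $|B_r|\sim r^Q$ combine exactly as you describe, and the resulting constant is indeed controlled by $c(n,p)$ uniformly in $s\in(0,1)$ because $sp\le p$. The paper does not prove this lemma itself but merely recalls it from \cite[Proposition 2.7]{P22}; your argument is the standard one for this fractional Poincar\'e inequality and is complete.
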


\begin{lemma}\label{Le102}
Let $1 < p < \infty ,\;s \in \left( {0,1} \right)$ such that $sp < Q$. Let also $v:{{\mathbb{H}}^n} \to {\mathbb{R}}$ be a measurable compactly supported function. Then there is a positive constant $c = c\left( {n,p,s} \right)$ such that
\[\| v \|_{{L^{{p_s^*}}}\left( {{{\mathbb{H}}^n}} \right)}^p \le c\left[ v \right]_{H{W^{s,p}}\left( {{{\mathbb{H}}^n}} \right)}^p\]
with ${p_s^*} = \frac{{Qp}}{{Q - sp}}$ being a critical Sobolev exponent.
\end{lemma}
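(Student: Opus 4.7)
The plan is to prove this fractional Sobolev embedding on $\mathbb{H}^n$ by combining a pointwise Riesz potential representation with the Hardy--Littlewood--Sobolev inequality on the Heisenberg group. This strategy mirrors the classical Euclidean proof via Riesz potentials, with the subelliptic heat semigroup replacing the Gauss kernel.

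First, by splitting $v = v_+ - v_-$, I reduce to the case $v \geq 0$. Using the spectral definition of the fractional sub-Laplacian $(-\Delta_{\mathbb{H}^n})^{s/2}$ together with the subelliptic heat kernel $e^{r\Delta_{\mathbb{H}^n}}$, one derives the pointwise estimate
\[
|v(\xi)| \;\lesssim\; \mathcal{I}_s g(\xi) \;:=\; \int_{\mathbb{H}^n} \frac{g(\eta)}{\|\eta^{-1}\circ\xi\|_{\mathbb{H}^n}^{Q-s}}\,d\eta, \qquad g := |(-\Delta_{\mathbb{H}^n})^{s/2} v|,
\]
where $\mathcal{I}_s$ is the Folland--Stein Riesz potential on $\mathbb{H}^n$. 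Next, invoking the Hardy--Littlewood--Sobolev inequality on $\mathbb{H}^n$ (established by Folland and Stein), one obtains $\|\mathcal{I}_s g\|_{L^{p_s^*}(\mathbb{H}^n)} \lesssim \|g\|_{L^p(\mathbb{H}^n)}$ precisely in the subcritical range $sp < Q$. The remaining task is to identify the operator-theoretic norm $\|(-\Delta_{\mathbb{H}^n})^{s/2} v\|_{L^p}$ with the Gagliardo seminorm $[v]_{HW^{s,p}(\mathbb{H}^n)}$; for $p = 2$ this is immediate via Plancherel, while for general $p$ it follows from Littlewood--Paley theory on stratified groups.

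The main obstacle is precisely this last norm comparison for $p \neq 2$: the noncommutative group structure makes the Fourier-side analysis substantially more delicate than in the Euclidean setting, and one ultimately needs $L^p$-boundedness of the subelliptic Riesz-type transforms associated to $\Delta_{\mathbb{H}^n}$, a nontrivial result from the Folland--Stein and Christ theory of singular integrals on nilpotent Lie groups. A more self-contained alternative would be a Maz'ya--Shaposhnikova style level-set argument: decompose $v$ via the dyadic truncations $v_k := \min\{2^k, (v - 2^k)_+\}$, apply the fractional Poincar\'e inequality of Lemma~\ref{Le101} on balls of radius chosen to cover each super-level set $\{v > 2^k\}$ through a Vitali selection in the Carnot--Carath\'eodory metric, and conclude by summing via the layer-cake formula and weak-type interpolation. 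This route avoids Fourier analysis entirely but concentrates all the difficulty into producing a covering argument fully compatible with the left-invariant C--C metric on $\mathbb{H}^n$.
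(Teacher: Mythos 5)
The paper does not prove this lemma at all: it is quoted verbatim from Kassymov--Suragan \cite[Theorem 2.5]{KS18}, whose argument is the elementary Di Nezza--Palatucci--Valdinoci-type proof (dyadic decomposition of super-level sets plus averaged difference estimates), i.e.\ much closer to the \emph{second}, unexecuted alternative you sketch than to your primary route.

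Your primary route has a genuine gap at the step you yourself flag as the ``main obstacle'', and it is not merely delicate --- it is false for $p>2$. The chain $\|v\|_{L^{p_s^*}}\lesssim\|\mathcal I_s g\|_{L^{p_s^*}}\lesssim\|g\|_{L^p}=\|(-\Delta_{\mathbb H^n})^{s/2}v\|_{L^p}$ must be closed by $\|(-\Delta_{\mathbb H^n})^{s/2}v\|_{L^p}\lesssim[v]_{HW^{s,p}}$. But the Gagliardo seminorm is a Besov $B^s_{p,p}$ quantity, whereas $\|(-\Delta)^{s/2}v\|_{L^p}$ is the Triebel--Lizorkin $F^s_{p,2}$ (Bessel-potential) norm; already in $\mathbb R^n$ the embedding $B^s_{p,p}\hookrightarrow F^s_{p,2}$ holds only for $p\le2$, and for $p>2$ the inclusion reverses, so no amount of Littlewood--Paley theory identifies the two norms or gives the inequality in the direction you need. (A direct attempt via the singular-integral representation of $(-\Delta)^{s/2}v$ and Minkowski's inequality also fails, because the resulting integral in the group variable diverges at the origin.) Since the lemma is needed for all $1<p<\infty$ with $sp<Q$, the HLS/Riesz-potential route cannot cover the stated range. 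Your fallback truncation-and-covering argument is the right kind of proof, but as written it is only a plan: the actual work (the covering of level sets compatible with the C--C metric and the summation of the dyadic pieces) is exactly what a proof would consist of, and none of it is carried out. To repair the write-up, either cite \cite{KS18} as the paper does, or execute the elementary level-set argument in full.
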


Now we also give the following result, a truncation lemma near $\partial\Omega$.

\begin{lemma}\label{Le103}
Let $ p \ge 1 ,\;s \in \left( {0,1} \right)$ and $v \in HW^{s,p}\left( {{B_r}} \right)$. If $\varphi  \in {C^{0,1}}\left( {{B_r}} \right) \cap {L^\infty }\left( {{B_r}} \right)$, then it holds that $\varphi  v \in HW^{s,p}\left( {{B_r}} \right)$ and ${\| {\varphi v} \|_{H{W^{s,p}}\left( {{B_r}} \right)}} \le c{\| v \|_{H{W^{s,p}}\left( {{B_r}} \right)}}$ with $c>0$ depending on $n,p,s,r\;\hbox{and}\;\varphi.$
\end{lemma}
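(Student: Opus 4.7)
The plan is a direct splitting argument. Writing
\[
(\varphi v)(\xi) - (\varphi v)(\eta) = \varphi(\xi)\bigl(v(\xi)-v(\eta)\bigr) + v(\eta)\bigl(\varphi(\xi)-\varphi(\eta)\bigr)
\]
and applying the elementary inequality $|a+b|^p \le 2^{p-1}(|a|^p+|b|^p)$, the Gagliardo semi-norm of $\varphi v$ on $B_r$ is controlled by two pieces $I_1$ and $I_2$. The first is
\[
I_1 = \int_{B_r}\int_{B_r}\frac{|\varphi(\xi)|^p|v(\xi)-v(\eta)|^p}{\|\eta^{-1}\circ\xi\|_{\mathbb{H}^n}^{Q+sp}}\,d\xi\,d\eta \;\le\; \|\varphi\|_{L^\infty(B_r)}^p\,[v]_{HW^{s,p}(B_r)}^p,
\]
which is immediate.

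The second piece is
\[
I_2 = \int_{B_r}\int_{B_r}\frac{|v(\eta)|^p|\varphi(\xi)-\varphi(\eta)|^p}{\|\eta^{-1}\circ\xi\|_{\mathbb{H}^n}^{Q+sp}}\,d\xi\,d\eta.
\]
Here I would use that $\varphi\in C^{0,1}(B_r)$ with respect to the C-C metric (equivalently, the Heisenberg norm), hence
$|\varphi(\xi)-\varphi(\eta)| \le L\,\|\eta^{-1}\circ\xi\|_{\mathbb{H}^n}$ with $L$ depending on $\varphi$. After inserting this, for each fixed $\eta \in B_r$ the inner integral is dominated by
\[
L^p\int_{B_r}\frac{d\xi}{\|\eta^{-1}\circ\xi\|_{\mathbb{H}^n}^{Q+sp-p}} \;\le\; L^p\int_{B_{2r}(\eta)}\frac{d\xi}{\|\eta^{-1}\circ\xi\|_{\mathbb{H}^n}^{Q-p(1-s)}},
\]
and passing to Heisenberg polar coordinates this integral converges precisely because $s<1$, yielding a bound of the form $c(n,p,s)\,L^p r^{p(1-s)}$, uniformly in $\eta$. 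Consequently $I_2 \le c(n,p,s,r,\varphi)\,\|v\|_{L^p(B_r)}^p$.

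Combined with the trivial estimate $\|\varphi v\|_{L^p(B_r)} \le \|\varphi\|_{L^\infty(B_r)}\|v\|_{L^p(B_r)}$, this yields $\varphi v\in HW^{s,p}(B_r)$ together with the asserted bound on the full norm, where the constant depends on $n,p,s,r$, $\|\varphi\|_{L^\infty(B_r)}$ and the Lipschitz seminorm of $\varphi$. The only mildly technical step is the integrability of $\|\eta^{-1}\circ\xi\|_{\mathbb{H}^n}^{-(Q-p(1-s))}$ on a Heisenberg ball, which is standard once one uses the equivalence between the C-C and Korányi distances and the homogeneity of the Haar measure; no deeper obstacle is expected.
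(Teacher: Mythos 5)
Your argument is correct and is precisely the standard product-rule splitting used in \cite[Lemma 5.3]{DPV12}, which is exactly the proof the paper invokes (and omits) for this lemma; the only Heisenberg-specific ingredients — the equivalence of the C-C and Kor\'anyi metrics and the integrability of $\|\eta^{-1}\circ\xi\|_{\mathbb{H}^n}^{-Q+p(1-s)}$ over a ball — are handled as you indicate. Nothing further is needed.
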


The proof of this lemma is very similar to that of \cite[Lemma 5.3]{DPV12}, so we omit it here. Based on Lemmas \ref{Le101}--\ref{Le103}, we could conclude a Sobolev-Poincar\'{e} inequality on balls in the Heisenberg group, which plays a crucial role in proving regularity of solutions.

\begin{proposition}[Sobolev-Poincar\'{e} type inequality]\label{Pro104}
Let $1 < p < \infty ,\;s \in \left( {0,1} \right)$ fulfill $sp < Q$. Suppose that $v \in H{W^{s,p}}\left( {{B_R(\xi_0)}} \right)$ and ${B_r(\xi_0)} \subset {B_R(\xi_0)}\;(0<r<R)$ are concentric balls. Then there exists a positive constant $c(n,p,s)$ such that
\[{\left( {{\fint_{{B_r}}}{{\left| {v - {{\left( v \right)}_{{r}}}} \right|}^{p_s^*}}\,d\xi } \right)^{\frac{p}{{p_s^*}}}} \le c{D_1}(R,r){\fint_{{B_R}}}\int_{{B_R}} {\frac{{{{\left| {v\left( \xi  \right) - v\left( \eta  \right)} \right|}^p}}}{{\| {{\eta ^{ - 1}} \circ \xi } \|_{{\mathbb{H}^n}}^{Q + sp}}}\,d\xi } d\eta  + c{D_2}(R,r){\fint_{{B_R}}}{\left| v \right|^p}\,d\xi \]
where
\[{D_1}(R,r): = {r^{sp}}{\left( {\frac{R}{r}} \right)^Q}\left[ {1 + {{\left( {\frac{R}{{R - r}}} \right)}^p} + {{\left( {\frac{R}{r}} \right)}^Q}{{\left( {\frac{R}{{R - r}}} \right)}^p} + {{\left( {\frac{R}{{R - r}}} \right)}^{Q + sp}} + {{\left( {\frac{R}{r}} \right)}^Q}{{\left( {\frac{R}{{R - r}}} \right)}^{Q + sp}}} \right]\]
and
\[{D_2}(R,r): = {\left( {\frac{R}{r}} \right)^{Q - sp}}{\left( {\frac{R}{{R - r}}} \right)^{Q + sp}}\left[ {1 + {{\left( {\frac{R}{r}} \right)}^Q}} \right].\]
\end{proposition}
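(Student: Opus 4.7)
The plan is to derive the local Sobolev-Poincar\'e inequality on $B_r$ from the global fractional Sobolev embedding (Lemma \ref{Le102}) via a cutoff argument. Set $\tilde r := (R+r)/2$ and fix a horizontal Lipschitz cutoff $\varphi\in C^{0,1}(\mathbb{H}^n)$ with $\varphi\equiv 1$ on $B_r$, $\mathrm{supp}\,\varphi\subset B_{\tilde r}$, $0\le\varphi\le 1$, and $|\nabla_H\varphi|\le c/(R-r)$. Define $w:=\varphi(v-(v)_R)$, which is compactly supported and, by Lemma \ref{Le103}, belongs to $HW^{s,p}(\mathbb{H}^n)$. Using the triangle inequality together with Jensen's estimate $|(v)_r-(v)_R|^{p_s^*}\le\fint_{B_r}|v-(v)_R|^{p_s^*}\,d\xi$, we first reduce to controlling $\fint_{B_r}|v-(v)_R|^{p_s^*}\,d\xi$. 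Since $w\equiv v-(v)_R$ on $B_r$, applying Lemma \ref{Le102} to $w$ gives
\begin{equation*}
\left(\fint_{B_r}|v-(v)_r|^{p_s^*}\,d\xi\right)^{p/p_s^*}\le c|B_r|^{-p/p_s^*}[w]^p_{HW^{s,p}(\mathbb{H}^n)}.
\end{equation*}

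The crux is estimating $[w]^p_{HW^{s,p}(\mathbb{H}^n)}$ in terms of quantities living on $B_R$. Partition $\mathbb{H}^n\times\mathbb{H}^n$ into the three nontrivial pieces $B_R\times B_R$, the cross region $B_R\times(\mathbb{H}^n\setminus B_R)$ together with its mirror, and $(\mathbb{H}^n\setminus B_R)\times(\mathbb{H}^n\setminus B_R)$ (which vanishes since $w\equiv 0$ off $B_{\tilde r}\subset B_R$). On $B_R\times B_R$ exploit the Leibniz-type identity
\begin{equation*}
w(\xi)-w(\eta)=\varphi(\xi)\bigl(v(\xi)-v(\eta)\bigr)+\bigl(v(\eta)-(v)_R\bigr)\bigl(\varphi(\xi)-\varphi(\eta)\bigr).
\end{equation*}
The first summand contributes $[v]^p_{HW^{s,p}(B_R)}$ via $|\varphi|\le 1$. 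For the second, using the Lipschitz bound $|\varphi(\xi)-\varphi(\eta)|^p\le c\,d(\xi,\eta)^p(R-r)^{-p}$ together with $\int_{B_R}d(\xi,\eta)^{p-Q-sp}\,d\xi\le cR^{p-sp}$ (integrable since $s<1$) yields a factor $c(R-r)^{-p}R^{p-sp}$ multiplying $\int_{B_R}|v-(v)_R|^p\,d\eta$; this is then estimable either through Lemma \ref{Le101} as $c(R/(R-r))^p[v]^p_{HW^{s,p}(B_R)}$ or directly as $c(R/(R-r))^p R^{-sp}\int_{B_R}|v|^p$. For the cross pieces, $w\equiv 0$ off $B_{\tilde r}$ reduces the integral to $2\int_{B_{\tilde r}}|w(\xi)|^p\int_{\mathbb{H}^n\setminus B_R}\|\eta^{-1}\circ\xi\|_{\mathbb{H}^n}^{-Q-sp}\,d\eta\,d\xi$; the inner integral is bounded by splitting $\mathbb{H}^n\setminus B_R$ into the annulus $B_{2R}\setminus B_R$ (where $\|\eta^{-1}\circ\xi\|_{\mathbb{H}^n}\ge(R-r)/2$, giving a contribution of order $R^Q(R-r)^{-Q-sp}$) plus $\mathbb{H}^n\setminus B_{2R}$ (yielding an $R^{-sp}$ tail via polar coordinates), which is the source of the $(R/(R-r))^{Q+sp}$ factors in the stated $D_1,D_2$.

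Assembling these pieces gives $[w]^p_{HW^{s,p}(\mathbb{H}^n)}\le\tilde c_1[v]^p_{HW^{s,p}(B_R)}+\tilde c_2\int_{B_R}|v|^p\,d\xi$ with $\tilde c_i$ combining the various scaling factors $R^{sp}$, $(R-r)^{-p}$, $(R-r)^{-Q-sp}$ produced above. Multiplying by $c|B_r|^{-p/p_s^*}\sim r^{sp-Q}$ and rewriting through $\int_{B_R}\int_{B_R}=|B_R|\fint_{B_R}\int_{B_R}$ and $\int_{B_R}|v|^p=|B_R|\fint_{B_R}|v|^p$, the scaling factors assemble into the specific combinations $r^{sp}(R/r)^Q[\cdots]$ and $(R/r)^{Q-sp}(R/(R-r))^{Q+sp}[\cdots]$ matching $D_1$ and $D_2$. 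The principal obstacle is the constant bookkeeping: the several summands inside the brackets of $D_1,D_2$ arise from the branching choices available at each step (Poincar\'e versus direct $L^p$ bound on $v-(v)_R$; sharp polar versus annulus-split estimate on the tail integral), each path producing a distinct combination of the dimensionless ratios $R/r$ and $R/(R-r)$ that collectively make up the stated constants.
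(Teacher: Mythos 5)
Your proposal is correct and follows essentially the same route as the paper: a cutoff supported in $B_{(R+r)/2}$, the global embedding of Lemma \ref{Le102} applied to the truncated function, a splitting of $\mathbb{H}^n\times\mathbb{H}^n$ with the Leibniz identity on $B_R\times B_R$, the Poincar\'e inequality of Lemma \ref{Le101} for the gradient-of-cutoff term, and the comparison of $\|\eta^{-1}\circ\xi\|_{\mathbb{H}^n}$ with $\|\eta^{-1}\circ\xi_0\|_{\mathbb{H}^n}$ for the tail. The only cosmetic difference is that you subtract $(v)_R$ inside the cutoff and reconcile with $(v)_r$ at the start, whereas the paper subtracts $(v)_r$ throughout and absorbs $|(v)_R-(v)_r|^p$ via Jensen; both yield the stated $D_1,D_2$.
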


\begin{proof} Take $\varphi \left( \xi  \right) \in C_0^\infty \left( {{B_R}\left( {{\xi _0}} \right)} \right)$ as a cut-off function such that $0 \le \varphi  \le 1,\;\varphi  \equiv 1$ in ${{B_r}\left( {{\xi _0}} \right)}$, ${\rm supp}\, \varphi \subset B_\frac{R+r}{2}( \xi _0)$ and $\left| {{\nabla _H}\varphi } \right| \le \frac{c}{{R - r}}$ in ${{B_R}\left( {{\xi _0}} \right)}$. Then $\varphi  \in HW_0^{s,p}\left( {\mathbb{H}^n} \right)$ by zero extension.

This time, we by virtue of Lemma \ref{Le102} get
\begin{align*}
   {\left( {\int_{{B_r}} {{{\left| {v - {{\left( v \right)}_r}} \right|}^{p_s^*}}\,d\xi } } \right)^{\frac{p}{{p_s^*}}}} \le & {\left( {\int_{{\mathbb{H}^n}} {{{\left| {\left( {v - {{\left( v \right)}_r}} \right)\varphi } \right|}^{p_s^*}}\,d\xi } } \right)^{\frac{p}{{p_s^*}}}} \\
   \le&  c\int_{{\mathbb{H}^n}} {\int_{{\mathbb{H}^n}} {\frac{{{{\left| {\left( {v - {{\left( v \right)}_r}} \right)\varphi \left( \xi  \right) - \left( {v - {{\left( v \right)}_r}} \right)\varphi \left( \eta  \right)} \right|}^p}}}{{\| {{\eta ^{ - 1}} \circ \xi } \|_{{\mathbb{H}^n}}^{Q + sp}}}\,d\xi } d\eta } \\
    \le &c\int_{{\mathbb{H}^n}} {\int_{{\mathbb{H}^n}} {\frac{{{\varphi ^p}\left( \xi  \right){{\left| {\left( {v - {{\left( v \right)}_r}} \right)\left( \xi  \right) - \left( {v - {{\left( v \right)}_r}} \right)\left( \eta  \right)} \right|}^p}}}{{\| {{\eta ^{ - 1}} \circ \xi } \|_{{\mathbb{H}^n}}^{Q + sp}}}\,d\xi } d\eta } \\
    & + c\int_{{\mathbb{H}^n}} {\int_{{\mathbb{H}^n}} {\frac{{{{\left| {\varphi \left( \xi  \right) - \varphi \left( \eta  \right)} \right|}^p}{{\left| {\left( {v - {{\left( v \right)}_r}} \right)\left( \eta  \right)} \right|}^p}}}{{\| {{\eta ^{ - 1}} \circ \xi } \|_{{\mathbb{H}^n}}^{Q + sp}}}\,d\xi } d\eta } \\
    : =& {I_1} + {I_2}.
\end{align*}
We split ${\mathbb{H}^n} \times {\mathbb{H}^n}$ into
\[\left( {{B_R} \times {B_R}} \right) \cup \left( {{\mathbb{H}^n}\backslash {B_R} \times {B_R}} \right) \cup \left( {{B_R} \times {\mathbb{H}^n}\backslash {B_R}} \right) \cup \left( {{\mathbb{H}^n}\backslash {B_R} \times {\mathbb{H}^n}\backslash {B_R}} \right).\]
Recalling the definition of the cut-off function $\varphi $, we have
\begin{align*}
   {I_1}& = c\int_{{B_R}} {\int_{{B_R}} {\frac{{{\varphi ^p}\left( \xi  \right){{\left| {v\left( \xi  \right) - v\left( \eta  \right)} \right|}^p}}}{{\| {{\eta ^{ - 1}} \circ \xi } \|_{{\mathbb{H}^n}}^{Q + sp}}}\,d\xi } d\eta }  + c\int_{{\mathbb{H}^n}\backslash {B_R}} {\int_{{B_R}} {\frac{{{\varphi ^p}\left( \xi  \right){{\left| {v\left( \xi  \right) - v\left( \eta  \right)} \right|}^p}}}{{\| {{\eta ^{ - 1}} \circ \xi } \|_{{\mathbb{H}^n}}^{Q + sp}}}\,d\xi } d\eta }  \\
   & \le c\int_{{B_R}} {\int_{{B_R}} {\frac{{{{\left| {v\left( \xi  \right) - v\left( \eta  \right)} \right|}^p}}}{{\| {{\eta ^{ - 1}} \circ \xi } \|_{{\mathbb{H}^n}}^{Q + sp}}}\,d\xi } d\eta }  + c\int_{{\mathbb{H}^n}\backslash {B_R}} {\int_{{B_{\frac{{R + r}}{2}}}} {\frac{{{{\left| {v\left( \xi  \right)} \right|}^p}}}{{\| {{\eta ^{ - 1}} \circ \xi } \|_{{\mathbb{H}^n}}^{Q + sp}}}\,d\xi } d\eta } ,
\end{align*}
where we naturally let $v\equiv0$ on ${{\mathbb{H}^n}\backslash {B_R}}$. For $\eta \in {{\mathbb{H}^n}\backslash {B_R}}$, $\xi \in {B_{\frac{{R + r}}{2}}}$, there holds that
\begin{align*}
   {\| {{\eta ^{ - 1}} \circ \xi_0 } \|_{{\mathbb{H}^n}}} & \le \left( {1 + \frac{{{{\| {{\xi ^{ - 1}} \circ {\xi _0}} \|}_{{\mathbb{H}^n}}}}}{{{{\| {{\eta ^{ - 1}} \circ \xi } \|}_{{\mathbb{H}^n}}}}}} \right){\| {{\eta ^{ - 1}} \circ \xi } \|_{{\mathbb{H}^n}}} \\
   &  \le \left(1 + \frac{(R+r)/2}{(R-r)/2} \right){\| {{\eta ^{ - 1}} \circ \xi } \|_{{\mathbb{H}^n}}} = \frac{{2R}}{{R - r}}{\| {{\eta ^{ - 1}} \circ \xi } \|_{{\mathbb{H}^n}}}.
\end{align*}
From this, it follows that
\begin{align*}
   \int_{{\mathbb{H}^n}\backslash {B_R}} {\int_{{B_{\frac{{R + r}}{2}}}} {\frac{{{{\left| {v\left( \xi  \right)} \right|}^p}}}{{\| {{\eta ^{ - 1}} \circ \xi } \|_{{\mathbb{H}^n}}^{Q + sp}}}\,d\xi } d\eta }& \le c{\left( {\frac{R}{{R - r}}} \right)^{Q + sp}}\int_{{\mathbb{H}^n}\backslash {B_R}} {\int_{{B_{\frac{{R + r}}{2}}}} {\frac{{{{\left| {v\left( \xi  \right)} \right|}^p}}}{{\| {{\eta ^{ - 1}} \circ {\xi _0}} \|_{{\mathbb{H}^n}}^{Q + sp}}}\,d\xi } d\eta }  \\
   &  \le c{\left( {\frac{R}{{R - r}}} \right)^{Q + sp}}{R^{ - sp}}\int_{{B_R}} {{{\left| {v\left( \xi  \right)} \right|}^p}\,d\xi } .
\end{align*}
For $I_2$, we derive
\begin{align*}
   {I_2} =& c\left( {\int_{{B_R}} {\int_{{B_R}} {} }  + \int_{{\mathbb{H}^n}\backslash {B_R}} {\int_{{B_R}} {} }  + \int_{{B_R}} {\int_{{\mathbb{H}^n}\backslash {B_R}} {} }  + \int_{{\mathbb{H}^n}\backslash {B_R}} {\int_{{\mathbb{H}^n}\backslash {B_R}} {} } } \right)\frac{{{{\left| {\varphi \left( \xi  \right) - \varphi \left( \eta  \right)} \right|}^p}{{\left| {\left( {v - {{\left( v \right)}_r}} \right)\left( \eta  \right)} \right|}^p}}}{{\| {{\eta ^{ - 1}} \circ \xi } \|_{{\mathbb{H}^n}}^{Q + sp}}}\,d\xi d\eta  \\
   \le& c\int_{{B_R}} {\int_{{B_R}} {\frac{{{{\left| {\varphi \left( \xi  \right) - \varphi \left( \eta  \right)} \right|}^p}{{\left| {\left( {v - {{\left( v \right)}_r}} \right)\left( \eta  \right)} \right|}^p}}}{{\| {{\eta ^{ - 1}} \circ \xi } \|_{{\mathbb{H}^n}}^{Q + sp}}}\,d\xi d\eta } }+ c\int_{{\mathbb{H}^n}\backslash {B_R}} {\int_{{B_{\frac{{R + r}}{2}}}} {\frac{{{{\left| {{{\left( v \right)}_r}} \right|}^p}}}{{\| {{\eta ^{ - 1}} \circ \xi } \|_{{\mathbb{H}^n}}^{Q + sp}}}\,d\xi d\eta } } \\
   & + \int_{{B_{\frac{{R + r}}{2}}}} {\int_{{\mathbb{H}^n}\backslash {B_R}} {\frac{{{{\left| {\left( {v - {{\left( v \right)}_r}} \right)\left( \eta  \right)} \right|}^p}}}{{\| {{\eta ^{ - 1}} \circ \xi } \|_{{\mathbb{H}^n}}^{Q + sp}}}\,d\xi d\eta } } \\
   : =& {I_{21}} + {I_{22}} + {I_{23}}.
\end{align*}

We first evalute $I_{21}$ as
\begin{align*}
   {I_{21}} & \le \frac{c}{{{{\left( {R - r} \right)}^p}}}\int_{{B_R}} {\int_{{B_R}} {\frac{{{{\left| { {v\left( \eta  \right) - {{\left( v \right)}_r}} } \right|}^p}}}{{\| {{\eta ^{ - 1}} \circ \xi } \|_{{\mathbb{H}^n}}^{Q + p\left( {s - 1} \right)}}}\,d\xi d\eta } }  \\
   &  \le \frac{c}{{{{\left( {R - r} \right)}^p}}}\int_{{B_R}} {{{\left| { {v\left( \eta  \right) - {{\left( v \right)}_r}} } \right|}^p}\int_{{B_{2R}}\left( \eta  \right)} {\frac{1}{{\| {{\eta ^{ - 1}} \circ \xi } \|_{{\mathbb{H}^n}}^{Q + p\left( {s - 1} \right)}}}\,d\xi d\eta } } \\
   & \le c{\left( {\frac{R}{{R - r}}} \right)^p}{R^{ - sp}}\int_{{B_R}} {{{\left| { {v\left( \eta  \right) - {{\left( v \right)}_r}} } \right|}^p}\,d\eta } \\
   & \le c{\left( {\frac{R}{{R - r}}} \right)^p}{R^{ - sp}}\left( {\int_{{B_R}} {{{\left| { {v\left( \eta  \right) - {{\left( v \right)}_R}} } \right|}^p}\,d\eta }  + \int_{{B_R}} {{{\left| { {{{\left( v \right)}_R} - {{\left( v \right)}_r}} } \right|}^p}\,d\eta } } \right)\\
    &\le c{\left( {\frac{R}{{R - r}}} \right)^p}{R^{ - sp}}\left( {{R^{sp}}\int_{{B_R}} {\int_{{B_R}} {\frac{{{{\left| { {v\left( \xi  \right) - v\left( \eta  \right)} } \right|}^p}}}{{\| {{\eta ^{ - 1}} \circ \xi } \|_{{\mathbb{H}^n}}^{Q + sp}}}\,d\xi d\eta } }  + {{\left| {{{\left( v \right)}_R} - {{\left( v \right)}_r}} \right|}^p}\left| {{B_R}} \right|} \right),
\end{align*}
where in the last line we have utilized Lemma \ref{Le101}. On the other hand,
\begin{align*}
   {\left| {{{\left( v \right)}_R} - {{\left( v \right)}_r}} \right|^p}\left| {{B_R}} \right|& = \left| {{B_R}} \right|{\left| {\fint_{{B_r}} {\left( {v - {{\left( v \right)}_R}} \right)d\xi } } \right|^p} \\
   &\le \left| {{B_R}} \right|\fint_{{B_r}} {{{\left| {v - {{\left( v \right)}_R}} \right|}^p}d\xi }  \\
  & \le  \frac{{\left| {{B_R}} \right|}}{{\left| {{B_r}} \right|}}\int_{{B_R}} {{{\left| {v - {{\left( v \right)}_R}} \right|}^p}d\xi }\\
  &  \le c{\left( {\frac{R}{r}} \right)^Q}{R^{sp}}\int_{{B_R}} {\int_{{B_R}} {\frac{{{{\left| { {v\left( \xi  \right) - v\left( \eta  \right)} } \right|}^p}}}{{\| {{\eta ^{ - 1}} \circ \xi } \|_{{\mathbb{H}^n}}^{Q + sp}}}d\xi d\eta } } .
\end{align*}
For $I_{22}$, we can see that
\begin{align*}
   {I_{22}}& \le c{\left( {\frac{R}{{R - r}}} \right)^{Q + sp}}\int_{{\mathbb{H}^n}\backslash {B_R}} {\int_{{B_{\frac{{R + r}}{2}}}} {\frac{{{{\left| {{{\left( v \right)}_r}} \right|}^p}}}{{\| {{\eta ^{ - 1}} \circ {\xi _0}} \|_{{\mathbb{H}^n}}^{Q + sp}}}d\xi d\eta } }  \\
   &  \le c{\left( {\frac{R}{{R - r}}} \right)^{Q + sp}}{R^{Q - sp}}\fint_{{B_r}} {{{\left| v \right|}^p}d\xi } \\
   & = c{\left( {\frac{R}{r}} \right)^Q}{\left( {\frac{R}{{R - r}}} \right)^{Q + sp}}{R^{ - sp}}\int_{{B_R}} {{{\left| v \right|}^p}d\xi } .
\end{align*}
Finally, the term $I_{23}$ is treated by
\begin{align*}
   {I_{23}}& \le c{\left( {\frac{R}{{R - r}}} \right)^{Q + sp}}\int_{{\mathbb{H}^n}\backslash {B_R}} {\frac{1}{{\| {{\xi ^{ - 1}} \circ {\xi _0}} \|_{{\mathbb{H}^n}}^{Q + sp}}}d\xi \int_{{B_{\frac{{R + r}}{2}}}} {{{\left| {v\left( \eta  \right) - {{\left( v \right)}_r}} \right|}^p}d\eta } }  \\
   & \le c\frac{{{R^Q}}}{{{{\left( {R - r} \right)}^{Q + sp}}}}\int_{{B_R}} {{{\left| {v\left( \eta  \right) - {{\left( v \right)}_r}} \right|}^p}d\eta } \\
   & \le c\frac{{{R^Q}}}{{{{\left( {R - r} \right)}^{Q + sp}}}}\left[{{R^{sp}}\int_{{B_R}} {\int_{{B_R}} {\frac{{{{\left| { {v\left( \xi  \right) - v\left( \eta  \right)} } \right|}^p}}}{{\| {{\eta ^{ - 1}} \circ \xi } \|_{{\mathbb{H}^n}}^{Q + sp}}}d\xi d\eta  + {\frac{{{R^{Q + sp}}}}{{{r^Q}}}} \int_{{B_R}} {\int_{{B_R}} {\frac{{{{\left| {{v\left( \xi  \right) - v\left( \eta  \right)} } \right|}^p}}}{{\| {{\eta ^{ - 1}} \circ \xi } \|_{{\mathbb{H}^n}}^{Q + sp}}}d\xi d\eta } } } } } \right],
\end{align*}
the procedure of which is analogous to $I_{21}$. Eventurally, after careful rearrangement, we obtain
\begin{align*}
    {\left( {\int_{{B_r}} {{{\left| {v - {{\left( v \right)}_r}} \right|}^{p_s^*}}d\xi } } \right)^{\frac{p}{{p_s^*}}}}
   &\le  c{R^Q}\left[ {1 + {{\left( {\frac{R}{{R - r}}} \right)}^p} + {{\left( {\frac{R}{r}} \right)}^Q}{{\left( {\frac{R}{{R - r}}} \right)}^p} + {{\left( {\frac{R}{{R - r}}} \right)}^{Q + sp}} + {{\left( {\frac{R}{r}} \right)}^Q}{{\left( {\frac{R}{{R - r}}} \right)}^{Q + sp}}} \right]\\
   &\quad\cdot \fint_{{B_R}} {\int_{{B_R}} {\frac{{{{\left| { {v\left( \xi  \right) - v\left( \eta  \right)} } \right|}^p}}}{{\| {{\eta ^{ - 1}} \circ \xi } \|_{{\mathbb{H}^n}}^{Q + sp}}}d\xi d\eta } }+ c{R^{Q - sp}}{\left( {\frac{R}{{R - r}}} \right)^{Q + sp}}\left[ {1 + {{\left( {\frac{R}{r}} \right)}^Q}} \right]\fint_{{B_R}} {{{\left| v \right|}^p}d\xi } ,
\end{align*}
which implies the statement.
\end{proof}

If we let $R=2r$ in the preceding Sobolev-Poincar\'{e} inequality, then we can get the very simple version below.
\begin{corollary}
Let $1 < p < \infty,s \in (0,1)$ fulfill $sp < Q$. Suppose that $v\in H{W^{s,p}}(B_{2r})$ and $B_r \subset B_{2r}$ are concentric balls. Then there exists a positive constant $c(n,p,s)$ such that
\[{\left( {{\fint_{{B_r}}}{{\left| {v - {{\left( v \right)}_{{r}}}} \right|}^{p_s^*}}\,d\xi } \right)^{\frac{p}{{p_s^*}}}} \le cr^{sp}{\fint_{B_{2r}}}\int_{B_{2r}} {\frac{{{{\left| {v\left( \xi  \right) - v\left( \eta  \right)} \right|}^p}}}{{\| {{\eta ^{ - 1}} \circ \xi } \|_{{\mathbb{H}^n}}^{Q + sp}}}\,d\xi } d\eta  + c{\fint_{B_{2r}}}{\left| v \right|^p}\,d\xi. \]
\end{corollary}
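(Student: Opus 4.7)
The plan is to derive this corollary as an immediate specialization of Proposition \ref{Pro104}. Since the general Sobolev--Poincar\'e inequality already contains all the analytic content (cutoff construction, splitting into $B_R \times B_R$ and the complementary regions, Lemma \ref{Le101} to handle the mean oscillation of $v$, and Lemma \ref{Le103} to keep the cutoff in $HW^{s,p}$), no fresh estimate is required here; I only need to compute the coefficients $D_1(R,r)$ and $D_2(R,r)$ in the specific case $R=2r$.

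First, I would substitute $R=2r$ directly into the two geometric quantities appearing in Proposition \ref{Pro104}: one has $R/r = 2$ and $R/(R-r) = 2$, both dimensionless constants depending only on the choice $R=2r$. Plugging these into
\[
D_1(2r,r) = r^{sp}\, 2^{Q}\Bigl[1 + 2^{p} + 2^{Q+p} + 2^{Q+sp} + 2^{2Q+sp}\Bigr],
\qquad
D_2(2r,r) = 2^{2Q}\bigl[1 + 2^{Q}\bigr],
\]
I see that $D_1(2r,r) = c(n,p,s)\,r^{sp}$ and $D_2(2r,r) = c(n,p,s)$, with constants depending only on $n,p,s$ (through $Q=2n+2$ and the exponents $p,sp$).

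Inserting these values into the inequality provided by Proposition \ref{Pro104} with $R=2r$ yields exactly
\[
\left(\fint_{B_r} |v-(v)_r|^{p_s^{*}}\,d\xi\right)^{p/p_s^{*}}
\le c\,r^{sp}\fint_{B_{2r}}\!\int_{B_{2r}}\frac{|v(\xi)-v(\eta)|^{p}}{\|\eta^{-1}\circ\xi\|_{\mathbb{H}^n}^{Q+sp}}\,d\xi\,d\eta
+ c\fint_{B_{2r}}|v|^{p}\,d\xi,
\]
as claimed, after absorbing all numerical factors into a single constant $c=c(n,p,s)$. No step here is truly an obstacle: the only point that deserves a line of justification is the verification that every exponent of $R/r$ and $R/(R-r)$ in $D_1,D_2$ collapses to a pure constant when $R=2r$, so that the $r^{sp}$ factor in $D_1$ is the only genuine scaling left.
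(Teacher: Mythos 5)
Your proposal is correct and is exactly the route the paper takes: the corollary is stated as the immediate specialization $R=2r$ of Proposition \ref{Pro104}, with $D_1(2r,r)=c(n,p,s)\,r^{sp}$ and $D_2(2r,r)=c(n,p,s)$ after substituting $R/r=R/(R-r)=2$. Your arithmetic for both coefficients checks out, so nothing further is needed.
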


The following result shows an embedding relation between the fractional Sobolev spaces $HW^{t,q}(\Omega)$ and $HW^{s,p}(\Omega)$.

\begin{lemma}\label{Le105}
  Let $1< p \le q$ and $0<s<t<1$. Let also $\Omega$ be a bounded measurable subset of $\mathbb{H}^n$. Then there holds that, for each $v \in HW^{t,q}(\Omega)$,
  \[{\left( {\int_\Omega  {\int_\Omega  {\frac{{{{\left| {{v\left( \xi  \right) - v\left( \eta  \right)} } \right|}^p}}}{{\| {{\eta ^{ - 1}} \circ \xi } \|_{{\mathbb{H}^n}}^{Q + sp}}}d\xi d\eta } } } \right)^{\frac{1}{p}}} \le c{\left| \Omega  \right|^{\frac{{q - p}}{{pq}}}}{\left( {\mathrm{diam}\left( \Omega  \right)} \right)^{t - s}}{\left( {\int_\Omega  {\int_\Omega  {\frac{{{{\left| { {v\left( \xi  \right) - v\left( \eta  \right)} } \right|}^q}}}{{\| {{\eta ^{ - 1}} \circ \xi } \|_{{\mathbb{H}^n}}^{Q + tq}}}d\xi d\eta } } } \right)^{\frac{1}{q}}},\]
  where $c>0$ depends upon $n,p,q,s,t.$
\end{lemma}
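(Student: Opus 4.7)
The plan is to use a Hölder-inequality decomposition that converts the $(s,p)$-seminorm integrand into the $(t,q)$-seminorm integrand, picking up an integrable power of the Heisenberg distance as the residual weight. The case $p=q$ is immediate (just a pointwise comparison $\|\eta^{-1}\circ\xi\|^{-(Q+sp)}\le(\operatorname{diam}\Omega)^{(t-s)p}\|\eta^{-1}\circ\xi\|^{-(Q+tq)}$), so I assume $p<q$.

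First I would rewrite
\[
\frac{|v(\xi)-v(\eta)|^{p}}{\|\eta^{-1}\circ\xi\|_{\mathbb{H}^n}^{Q+sp}}
=\left(\frac{|v(\xi)-v(\eta)|^{q}}{\|\eta^{-1}\circ\xi\|_{\mathbb{H}^n}^{Q+tq}}\right)^{p/q}\cdot\|\eta^{-1}\circ\xi\|_{\mathbb{H}^n}^{\beta},
\qquad \beta:=\tfrac{(Q+tq)p}{q}-(Q+sp),
\]
and apply Hölder's inequality on $\Omega\times\Omega$ with conjugate exponents $q/p$ and $q/(q-p)$. This produces the desired $(t,q)$-seminorm to the power $p/q$ times a residual factor of the form $\bigl(\iint_{\Omega\times\Omega}\|\eta^{-1}\circ\xi\|_{\mathbb{H}^n}^{\beta q/(q-p)}\,d\xi d\eta\bigr)^{(q-p)/q}$.

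Next I would simplify the residual exponent: a direct computation gives
\[
\frac{\beta q}{q-p}=-Q+\frac{pq(t-s)}{q-p}=:-Q+\gamma,\qquad \gamma>0,
\]
so the residual integrand is $\|\eta^{-1}\circ\xi\|_{\mathbb{H}^n}^{-Q+\gamma}$. To bound the double integral, fix $\eta\in\Omega$; since $\Omega\subset B_{d}(\eta)$ with $d=\operatorname{diam}(\Omega)$, passing to polar-like coordinates adapted to the Heisenberg homogeneous structure (i.e. using the Haar-measure scaling $r^{Q-1}\,dr$) yields $\int_{\Omega}\|\eta^{-1}\circ\xi\|_{\mathbb{H}^n}^{-Q+\gamma}\,d\xi\le c\gamma^{-1}d^{\gamma}$; integrating in $\eta$ produces the factor $|\Omega|\,d^{\gamma}$.

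Finally I would collect the exponents: raising the residual bound to the power $(q-p)/q$ gives $|\Omega|^{(q-p)/q}\,d^{\gamma(q-p)/q}=|\Omega|^{(q-p)/q}\,d^{p(t-s)}$, and taking the $p$-th root of the whole inequality delivers exactly the stated constants $|\Omega|^{(q-p)/(pq)}$ and $(\operatorname{diam}\Omega)^{t-s}$. I expect no serious obstacle: the only point requiring a little care is verifying the $\gamma>0$ integrability (which is automatic from $t>s$ and $q>p$) and the arithmetic on the exponents; the radial integration in $\mathbb{H}^n$ is standard because the Haar measure coincides with the $(2n{+}1)$-dimensional Lebesgue measure and the homogeneous norm has homogeneous dimension $Q$.
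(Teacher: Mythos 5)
Your proposal is correct and follows essentially the same route as the paper's proof: the same Hölder decomposition with exponents $q/p$ and $q/(q-p)$, the same residual weight $\|\eta^{-1}\circ\xi\|_{\mathbb{H}^n}^{-Q+\gamma}$ with $\gamma=\frac{pq(t-s)}{q-p}$ bounded by polar integration over $B_{d}(\eta)$, and the same trivial pointwise comparison when $p=q$. The exponent arithmetic checks out, so nothing further is needed.
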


\begin{proof} For $p<q$, we first utilize the H\"{o}lder inequality to get
\begin{align*}
    \int_\Omega  {\int_\Omega  {\frac{{{{\left| { {v\left( \xi  \right) - v\left( \eta  \right)}} \right|}^p}}}{{\| {{\eta ^{ - 1}} \circ \xi } \|_{{\mathbb{H}^n}}^{Q + sp}}}\,d\xi d\eta } }
 & =   \int_\Omega  {\int_\Omega  {\frac{{{{\left| { {v\left( \xi  \right) - v\left( \eta  \right)} } \right|}^p}}}{{\| {{\eta ^{ - 1}} \circ \xi } \|_{{\mathbb{H}^n}}^{\left( {Q + tq} \right)\frac{p}{q}}}}\frac{1}{{\| {{\eta ^{ - 1}} \circ \xi } \|_{{\mathbb{H}^n}}^{Q\frac{{q - p}}{q} + (s - t)p}}}\,d\xi d\eta } } \\
  & \le {\left( {\int_\Omega  {\int_\Omega  {\frac{{{{\left| {{v\left( \xi  \right) - v\left( \eta  \right)} } \right|}^q}}}{{\| {{\eta ^{ - 1}} \circ \xi } \|_{{\mathbb{H}^n}}^{Q + tq}}}\,d\xi d\eta } } } \right)^{\frac{p}{q}}}{\left( {\int_\Omega  {\int_\Omega  {\frac{1}{{\| {{\eta ^{ - 1}} \circ \xi } \|_{{\mathbb{H}^n}}^{Q + \frac{{(s - t)pq}}{{q - p}}}}}\,d\xi d\eta } } } \right)^{\frac{{q - p}}{q}}}.
\end{align*}
On the other hand,
\begin{align*}
   \int_\Omega  {\int_\Omega  {\frac{1}{{\| {{\eta ^{ - 1}} \circ \xi } \|_{{\mathbb{H}^n}}^{Q + \frac{{(s - t)pq}}{{q - p}}}}}\,d\xi d\eta } } & \le \int_\Omega  {\int_{{B_d}\left( \eta  \right)} {\frac{1}{{\| {{\eta ^{ - 1}} \circ \xi } \|_{{\mathbb{\mathbb{H}}^n}}^{Q + \frac{{(s - t)pq}}{{q - p}}}}}\,d\xi d\eta } }\\
  &\le  Q\left| {{B_1}} \right|\int_\Omega  {\int_0^d {{\rho ^{\frac{{(t - s)pq}}{{q - p}} - 1}}\,d\rho d\eta } }  = \frac{{Q\left| {{B_1}} \right|\left( {q - p} \right)}}{{(t - s)pq}}{d^{\frac{{(t - s)pq}}{{q - p}}}}\left| \Omega  \right|
\end{align*}
with $d: =\mathrm{ diam}\left( \Omega  \right)$. The combination of preceding inequalities implies the desired displaying.

If $q=p$, noting ${\| {{\eta ^{ - 1}} \circ \xi } \|_{{\mathbb{H}^n}}} \le \mathrm{diam}\left( \Omega  \right)$ for $\xi,\eta \in \Omega$ and $s<t$, we can readily obtain
\begin{align*}
   {\left( {\int_\Omega  {\int_\Omega  {\frac{{{{\left| {{v\left( \xi  \right) - v\left( \eta  \right)} } \right|}^p}}}{{\| {{\eta ^{ - 1}} \circ \xi } \|_{{\mathbb{H}^n}}^{Q + sp}}}\,d\xi d\eta } } } \right)^{\frac{1}{p}}}
   &=  {\left( {\int_\Omega  {\int_\Omega  {\frac{{{{\left| { {v\left( \xi  \right) - v\left( \eta  \right)} } \right|}^p}}}{{\| {{\eta ^{ - 1}} \circ \xi } \|_{{\mathbb{H}^n}}^{Q + tp}}}\frac{1}{{\| {{\eta ^{ - 1}} \circ \xi } \|_{{\mathbb{H}^n}}^{(s - t)p}}}\,d\xi d\eta } } } \right)^{\frac{1}{p}}}\\
   & \le {\left( {\mathrm{diam}\left( \Omega  \right)} \right)^{t - s}}{\left( {\int_\Omega  {\int_\Omega  {\frac{{{{\left| { {v\left( \xi  \right) - v\left( \eta  \right)} } \right|}^p}}}{{\| {{\eta ^{ - 1}} \circ \xi } \|_{{\mathbb{H}^n}}^{Q + tp}}}\,d\xi d\eta } } } \right)^{\frac{1}{p}}}.
\end{align*}
Now we complete the proof.
\end{proof}

The forthcoming two lemmas are the consequences of these results above, which will be exploited in the proof of boundedness and H\"{o}lder continuity for solutions.

\begin{lemma}\label{Le24}
Assume that the constant $s,\;t,\;p$ and $q$ satisfy \eqref{eqy18} and \eqref{eqy112}. Then for every $f \in H{W^{s,p}}\left( {{B_r}} \right)$ we infer that
\begin{align*}
 & \fint_{{B_r}} {\left( {{{\left| {\frac{f}{{{r^s}}}} \right|}^p} + {a_0}{{\left| {\frac{f}{{{r^t}}}} \right|}^q}} \right)\,d\xi } \\
   \le& c{a_0}\frac{{D_1^{\frac{q}{p}}(R,r)}}{{{r^{tq}}}}{\left( {\fint_{{B_R}} {\int_{{B_R}} {\frac{{{{\left| {f\left( \xi  \right) - f\left( \eta  \right)} \right|}^p}}}{{\| {{\eta ^{ - 1}} \circ \xi } \|_{{\mathbb{H}^n}}^{Q + sp}}}\,d\xi } d\eta } } \right)^{\frac{q}{p}}}\\
   &  + c\frac{{{D_1}(R,r)}}{{{r^{sp}}}}{\left( {\frac{{\left| {{\rm{supp}}\,f} \right|}}{{\left| {{B_r}} \right|}}} \right)^{\frac{{sp}}{Q}}}\fint_{{B_R}} {\int_{{B_R}} {\frac{{{{\left| {f\left( \xi  \right) - f\left( \eta  \right)} \right|}^p}}}{{\| {{\eta ^{ - 1}} \circ \xi } \|_{{\mathbb{H}^n}}^{Q + sp}}}\,d\xi } d\eta }  \\
   &  + c\left[ {{{\left(\frac{R}{r}\right)^Q\left( {\frac{{\left| {{\rm{supp}}\,f} \right|}}{{\left| {{B_r}} \right|}}} \right)}^{p - 1}} + D_2^{\frac{q}{p}}(R,r)\left( {{{\left( {\frac{{\left| {{\rm{supp}}\,f} \right|}}{{\left| {{B_r}} \right|}}} \right)}^{\frac{{sp}}{Q}}} + {{\left( {\frac{{\left| \widetilde{{\rm{supp}}}\, f \right|}}{{\left| {{B_R}} \right|}}} \right)}^{\frac{{q - p}}{p}}}} \right)} \right]\fint_{{B_R}} {\left( {{{\left| {\frac{f}{{{r^s}}}} \right|}^p} + {a_0}{{\left| {\frac{f}{{{r^t}}}} \right|}^q}} \right)\,d\xi },
\end{align*}
where ${\rm{supp}}\,f: = \{ {B_r}:f \ne 0\} $ and $\widetilde{\rm{supp}}\,f: = \{ {B_R}:f \ne 0\} $, and $c>0$ depends only upon $n,p,q,s,t$, and ${a_0}$ is any positive constant.
\end{lemma}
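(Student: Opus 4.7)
The plan is to split the integrand $|f/r^s|^p + a_0|f/r^t|^q$ into its $p$- and $q$-pieces and bound each by applying the Sobolev--Poincar\'e inequality (Proposition \ref{Pro104}) together with H\"older's inequality exploiting the smallness of $\operatorname{supp} f$. For notational economy I will write
\[
\rho := \frac{|\operatorname{supp} f|}{|B_r|},\qquad \widetilde{\rho} := \frac{|\widetilde{\operatorname{supp}} f|}{|B_R|}, \qquad I(R) := \fint_{B_R}\!\Big(|f/r^s|^p + a_0|f/r^t|^q\Big)\,d\xi,
\]
and let $G$ denote the averaged Gagliardo seminorm $\fint_{B_R}\!\int_{B_R} |f(\xi)-f(\eta)|^p \|\eta^{-1}\circ\xi\|_{\mathbb{H}^n}^{-Q-sp}\,d\xi d\eta$. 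Throughout I use the identity $1-p/p_s^* = sp/Q$, and the trivial monotonicities $\rho,\widetilde{\rho}\le 1$, $D_2\ge 1$, $D_2\le D_2^{q/p}$.

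For the $p$-piece, H\"older's inequality (with conjugate exponents $p_s^*/p$ and $p_s^*/(p_s^*-p)$) applied to $|f|^p=|f|^p\chi_{\operatorname{supp} f}$ yields $\fint_{B_r}|f|^p \le \rho^{sp/Q}\bigl(\fint_{B_r}|f|^{p_s^*}\bigr)^{p/p_s^*}$. The decomposition $|f|^{p_s^*}\le c(|f-(f)_r|^{p_s^*}+|(f)_r|^{p_s^*})$ and Proposition \ref{Pro104} then give
\[
\bigl(\fint_{B_r}|f|^{p_s^*}\bigr)^{p/p_s^*}\;\le\; c D_1 G + c D_2 \fint_{B_R}|f|^p + c|(f)_r|^p.
\]
The residual mean is controlled by another H\"older application on $\operatorname{supp} f$: $|(f)_r|^p\le\rho^{p-1}\fint_{B_r}|f|^p\le (R/r)^Q\rho^{p-1}\fint_{B_R}|f|^p$. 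Dividing through by $r^{sp}$ and absorbing $\rho^{sp/Q+p-1}\le\rho^{p-1}$ and $D_2\le D_2^{q/p}$ into the prefactors produces the $c D_1 r^{-sp}\rho^{sp/Q} G$ term together with contributions of the form $[D_2^{q/p}\rho^{sp/Q} + (R/r)^Q\rho^{p-1}]\fint_{B_R}|f/r^s|^p$ to be absorbed into the $I(R)$-bracket.

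For the $q$-piece, split $|f|^q\le c(|f-(f)_r|^q+|(f)_r|^q)$. Since $p\le q\le p_s^*$, Jensen yields $\fint_{B_r}|f-(f)_r|^q\le\bigl(\fint_{B_r}|f-(f)_r|^{p_s^*}\bigr)^{q/p_s^*}$; writing this as $\bigl((\,\cdot\,)^{p/p_s^*}\bigr)^{q/p}$, using Proposition \ref{Pro104} and raising to the $q/p$-power gives
\[
\fint_{B_r}|f-(f)_r|^q \;\le\; c D_1^{q/p} G^{q/p} + c D_2^{q/p} r^{sq}\Bigl(\fint_{B_R}|f/r^s|^p\Bigr)^{q/p}.
\]
The crucial conversion is a further H\"older (exponents $q/p$ and $q/(q-p)$) on the set $\widetilde{\operatorname{supp}} f$, which gives $\bigl(\fint_{B_R}|f/r^s|^p\bigr)^{q/p}\le \widetilde{\rho}^{(q-p)/p}\fint_{B_R}|f/r^s|^q = \widetilde{\rho}^{(q-p)/p}\, r^{(t-s)q}\fint_{B_R}|f/r^t|^q$; the powers of $r$ cancel exactly against the $r^{-tq}$ prefactor ($r^{-tq}\cdot r^{sq}\cdot r^{(t-s)q}=1$), so this contribution becomes $c D_2^{q/p}\widetilde{\rho}^{(q-p)/p}\cdot a_0\fint_{B_R}|f/r^t|^q$. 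The residual $|(f)_r|^q$ is treated by $|(f)_r|^q\le\rho^{q-1}\fint_{B_r}|f|^q\le (R/r)^Q\rho^{p-1}\fint_{B_R}|f|^q$ (using $\rho^{q-1}\le\rho^{p-1}$). Adding the $p$- and $q$-contributions and gathering coefficients produces exactly the inequality stated.

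\textbf{Where the difficulty lies.} No single step is hard in isolation; the main obstacle is bookkeeping. One must track (i) the identity $1-p/p_s^* = sp/Q$ converting H\"older deficits into the $\rho^{sp/Q}$ factor, (ii) the precise cancellation $r^{-tq}\cdot r^{sq}\cdot r^{(t-s)q}=1$ that allows the interpolated term $N_p(R)^{q/p}$ to be re-expressed as $N_q(R)$ without stray scales, and (iii) the monotonicities that collapse $\rho^{sp/Q+p-1}$ and $\rho^{q-1}$ uniformly to $\rho^{p-1}$ and $D_2$ to $D_2^{q/p}$, so that the three coefficients of $I(R)$ appear in the exact form stated. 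The complementary case $sp\ge Q$ requires no new idea: by the Morrey embedding of \cite{AM18} one may replace $p_s^*$ with any sufficiently large finite exponent and repeat the argument.
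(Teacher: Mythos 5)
Your proposal is correct and follows essentially the same route as the paper: split into the $p$- and $q$-pieces, apply Proposition \ref{Pro104} after a H\"older step that converts the smallness of $\operatorname{supp} f$ (via $1-p/p_s^*=sp/Q$) into the $\rho^{sp/Q}$ factor, control the means $|(f)_r|^p,|(f)_r|^q$ by $\rho^{p-1}$, use H\"older on $\widetilde{\operatorname{supp}}\,f$ with exponents $q/p$ for the $D_2^{q/p}$ term, and finish with $\fint_{B_r}\le c(R/r)^Q\fint_{B_R}$. The only (harmless) difference is that you apply the support-H\"older step to $|f|^p$ before splitting off the mean, whereas the paper splits first; your ordering is if anything the cleaner of the two.
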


\begin{proof} By the H\"{o}lder inequality and Proposition \ref{Pro104}, we obtain
\begin{align*}
   \fint_{{B_r}} {{{\left| {\frac{f}{{{r^s}}}} \right|}^p}\,d\xi }  &\le c\fint_{{B_r}} {{{\left| {\frac{{f - {{\left( f \right)}_r}}}{{{r^s}}}} \right|}^p}\,d\xi }  + c{\left| {\frac{{{{\left( f \right)}_r}}}{{{r^s}}}} \right|^p} \\
&\le     c{\left( {\frac{{\left| {{\rm{supp}}\,f} \right|}}{{\left| {{B_r}} \right|}}} \right)^{\frac{{sp}}{Q}}}{\left( {\fint_{{B_r}} {{{\left| {\frac{{f - {{\left( f \right)}_r}}}{{{r^s}}}} \right|}^{p_s^*}}\,d\xi } } \right)^{\frac{p}{{p_s^*}}}} + c{\left| {\frac{{{{\left( f \right)}_r}}}{{{r^s}}}} \right|^p}\\
 &\le c\frac{{{D_1}(R,r)}}{{{r^{sp}}}}{\left( {\frac{{\left| {{\rm{supp}}\,f} \right|}}{{\left| {{B_r}} \right|}}} \right)^{\frac{{sp}}{Q}}}\fint_{{B_R}} {\int_{{B_R}} {\frac{{{{\left| {f\left( \xi  \right) - f\left( \eta  \right)} \right|}^p}}}{{\| {{\eta ^{ - 1}} \circ \xi } \|_{{\mathbb{H}^n}}^{Q + sp}}}\,d\xi } d\eta } \\
 &\quad+ c{\left( {\frac{{\left| {{\rm{supp}}\,f} \right|}}{{\left| {{B_r}} \right|}}} \right)^{\frac{{sp}}{Q}}}{D_2}(R,r)\fint_{{B_R}} {{{\left| {\frac{f}{{{r^s}}}} \right|}^p}\,d\xi }  + c{\left( {\frac{{\left| {{\rm{supp}}\,f} \right|}}{{\left| {{B_r}} \right|}}} \right)^{p - 1}}\fint_{{B_r}} {{{\left| {\frac{f}{{{r^s}}}} \right|}^p}\,d\xi },
\end{align*}
where we used the inequality below,
\[{\left| {\frac{{{{\left( f \right)}_r}}}{{{r^s}}}} \right|^p} = {r^{ - sp}}{\left| {\fint_{{B_r}} {f{\chi _{\left\{ {f \ne 0} \right\}}}\,d\xi } } \right|^p} \le {\left( {\frac{{\left| {{\rm{supp}}\;f} \right|}}{{\left| {{B_r}} \right|}}} \right)^{p - 1}}\fint_{{B_r}} {{{\left| {\frac{f}{{{r^s}}}} \right|}^p}\,d\xi } .\]

On the other hand, via the H\"{o}lder inequality and Proposition \ref{Pro104} again,
\begin{align*}
   \fint_{{B_r}} {{{\left| {\frac{f}{{{r^t}}}} \right|}^q}d\xi }  \le& c{\left( {\fint_{{B_r}} {{{\left| {\frac{{f - {{\left( f \right)}_r}}}{{{r^t}}}} \right|}^{p_s^*}}d\xi } } \right)^{\frac{q}{{p_s^*}}}} + c{\left| {\frac{{{{\left( f \right)}_r}}}{{{r^t}}}} \right|^q} \\
   \le&  c\frac{{D_1^{\frac{q}{p}}(R,r)}}{{{r^{tq}}}}{\left( {\fint_{{B_R}} {\int_{{B_R}} {\frac{{{{\left| {f\left( \xi  \right) - f\left( \eta  \right)} \right|}^p}}}{{\| {{\eta ^{ - 1}} \circ \xi } \|_{{\mathbb{H}^n}}^{Q + sp}}}\,d\xi } d\eta } } \right)^{\frac{q}{p}}}+ cD_2^{\frac{q}{p}}(R,r){\left( {\fint_{{B_R}} {{{\left| {\frac{f}{{{r^t}}}} \right|}^p}\,d\xi } } \right)^{\frac{q}{p}}} + c{\left| {\frac{{{{\left( f \right)}_r}}}{{{r^t}}}} \right|^q}\\
   \le&  c\frac{{D_1^{\frac{q}{p}}(R,r)}}{{{r^{tq}}}}{\left( {\fint_{{B_R}} {\int_{{B_R}} {\frac{{{{\left| {f\left( \xi  \right) - f\left( \eta  \right)} \right|}^p}}}{{\| {{\eta ^{ - 1}} \circ \xi } \|_{{\mathbb{H}^n}}^{Q + sp}}}\,d\xi } d\eta } } \right)^{\frac{q}{p}}}\\
   & + cD_2^{\frac{q}{p}}(R,r){\left( {\frac{{\left| \widetilde{\rm{supp }}\, f \right|}}{{\left| {{B_R}} \right|}}} \right)^{\frac{{q - p}}{p}}}\fint_{{B_R}} {{{\left| {\frac{f}{{{r^t}}}} \right|}^q}\,d\xi }  + c{\left( {\frac{{\left| {{\rm{supp}}\,f} \right|}}{{\left| {{B_r}} \right|}}} \right)^{p - 1}}\fint_{{B_r}} {{{\left| {\frac{f}{{{r^t}}}} \right|}^q}\,d\xi },
\end{align*}
where we can see that
\[{\left| {\frac{{{{\left( f \right)}_r}}}{{{r^t}}}} \right|^q} \le {\left( {\frac{{\left| {{\rm{supp}}\,f} \right|}}{{\left| {{B_r}} \right|}}} \right)^{q - 1}}\fint_{{B_r}} {{{\left| {\frac{f}{{{r^t}}}} \right|}^q}\,d\xi }  \le {\left( {\frac{{\left| {{\rm{supp}}\,f} \right|}}{{\left| {{B_r}} \right|}}} \right)^{p - 1}}\fint_{{B_r}} {{{\left| {\frac{f}{{{r^t}}}} \right|}^q}\,d\xi } \]
and
\[{r^{ - tq}}{\left( {\fint_{{B_R}} {{{\left| f \right|}^p}\,d\xi } } \right)^{\frac{q}{p}}} \le {\left( {\frac{{\left| \widetilde{\rm{supp}}\,f \right|}}{{\left| {{B_R}} \right|}}} \right)^{\frac{{q - p}}{p}}}\fint_{{B_R}} {{{\left| {\frac{f}{{{r^t}}}} \right|}^q}\,d\xi } .\]
We finally observe the plain relation that
$$
\fint_{{B_r}}\left|\frac{f}{r^s}\right|^p+a_0\left|\frac{f}{r^t}\right|^q\,d\xi\le c\left(\frac{R}{r}\right)^Q\fint_{{B_R}}\left|\frac{f}{r^s}\right|^p+a_0\left|\frac{f}{r^t}\right|^q\,d\xi.
$$
In summary, we combine all the previous inequalities to arrive at the desired display.
\end{proof}

Now denote
\[a_R^ + : = \mathop {\sup }\limits_{{B_R} \times {B_R}} a\left( {\cdot,\cdot} \right) \quad \text{and} \quad a_R^ - : = \mathop {\inf }\limits_{{B_R} \times {B_R}} a\left( {\cdot,\cdot} \right).\]

\begin{lemma}\label{Le107}
Let $s,t \in (0,1)$, $1<p\le q$ and $a(\cdot,\cdot)$ satisfies \eqref{eqy110} with
\begin{equation}\label{eqy113}
 tq \le sp + \alpha .
\end{equation}
Assume $f \in H{W^{t,q}}\left( {{B_{\bar R}}} \right) \cap {L^\infty }\left( {{B_{\bar R}}} \right)$ with ${\bar R} \le 1$. Then for $\gamma : = \min \left\{ {\frac{{p_s^*}}{p},\frac{{q_t^*}}{q}} \right\} > 1$, we have
\begin{align*}
   & {\left[ {\fint_{{B_r}} {{{\left( {{{\left| {\frac{f}{{{r^s}}}} \right|}^p} + a_{\bar R}^ + {{\left| {\frac{f}{{{r^t}}}} \right|}^q}} \right)}^\gamma }\,d\xi } } \right]^{\frac{1}{\gamma }}} \\
    \le& c\left( {1 + \| f \|_{{L^\infty }\left( {{B_r}} \right)}^{q - p}} \right)\left( {\frac{{{D_1}\left( {R,r} \right)}}{{{r^{sp}}}} + \frac{{{\widetilde D}_1\left( {R,r} \right)}}{{{r^{tq}}}}} \right)\fint_{{B_R}} {\int_{{B_R}} {\frac{{H\left( {\xi ,\eta ,\left| {f\left( \xi  \right) - f\left( \eta  \right)} \right|} \right)}}{{\| {{\eta ^{ - 1}}\circ \xi } \|_{{\mathbb{H}^n}}^Q}}\,d\xi d\eta } } \\
    & + c\left( {1 + \| f \|_{{L^\infty }\left( {{B_r}} \right)}^{q - p}} \right)\left( {1 + {D_2}\left( {R,r} \right) + {\widetilde D}_2\left( {R,r} \right)} \right)\fint_{{B_R}} {\left( {{{\left| {\frac{f}{{{r^s}}}} \right|}^p} + a_{\bar R}^ - {{\left| {\frac{f}{{{r^t}}}} \right|}^q}} \right)\,d\xi } .
\end{align*}
where ${B_r} \subset {B_R} \subseteq {B_{\bar R}}$ are concentric balls with $\frac{1}{2}\bar R \le r < R \le \bar R$, and $c>0$ depends only on $n,p,q,s,t$ and ${\left[ a \right]_\alpha }$. Here ${\widetilde D}_1(R,r)$ and ${\widetilde D}_2(R,r)$ are seperately ${D_1}\left( {R,r} \right)$ and ${D_2}\left( {R,r} \right) $ replaced $sp$ by $tq$.
\end{lemma}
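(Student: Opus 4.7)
The strategy is to reduce the $L^\gamma$-average on the left-hand side to two separate Sobolev-type quantities (one for each phase), apply the Sobolev--Poincar\'{e} inequality of Proposition~\ref{Pro104} to each, and then absorb the coefficient $a_{\bar R}^+$ into the target right-hand side structure by using the H\"{o}lder continuity of $a$ together with the sharpness condition $tq\le sp+\alpha$.

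First, since $\gamma\le\min\{p_s^*/p,q_t^*/q\}$, the elementary inequality $(A+B)^\gamma\le c(A^\gamma+B^\gamma)$ combined with Minkowski's and Jensen's inequalities yields
\[
\Bigl[\fint_{B_r}\bigl(|f/r^s|^p+a_{\bar R}^+|f/r^t|^q\bigr)^\gamma\,d\xi\Bigr]^{1/\gamma}\le c\Bigl(\fint_{B_r}|f/r^s|^{p_s^*}\,d\xi\Bigr)^{p/p_s^*}+c\,a_{\bar R}^+\Bigl(\fint_{B_r}|f/r^t|^{q_t^*}\,d\xi\Bigr)^{q/q_t^*}.
\]
To each summand I would apply Proposition~\ref{Pro104} (once with the $(s,p)$-pair, once with the $(t,q)$-pair), handling the means $(f)_r$ via the trivial estimates $|(f)_r|^p\le c\fint_{B_R}|f|^p$ and its $q$-analogue. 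This produces the factors $D_1/r^{sp}$ and $\widetilde D_1/r^{tq}$ in front of the respective Gagliardo seminorms on $B_R$, together with local integrals carrying the multipliers $c(1+D_2)$ and $c\,a_{\bar R}^+(1+\widetilde D_2)$.

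The next step is to reduce $a_{\bar R}^+$ to the coefficients demanded by the target. On the local $q$-integral I split $a_{\bar R}^+=a_{\bar R}^-+(a_{\bar R}^+-a_{\bar R}^-)$: the first piece matches the right-hand side directly, while the error $(a_{\bar R}^+-a_{\bar R}^-)\le c[a]_\alpha\bar R^\alpha$ (from \eqref{eqy110}) combines with $|f|^q\le(2\|f\|_\infty)^{q-p}|f|^p$ and the scale estimate $\bar R^\alpha r^{sp-tq}\le c$ (which follows from $\bar R\le 1$, $r\ge\bar R/2$ and $|tq-sp|\le\alpha$) to be absorbed into $c\|f\|_\infty^{q-p}(1+\widetilde D_2)\fint|f/r^s|^p$. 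On the Gagliardo seminorm term I instead split $a_{\bar R}^+=a(\xi,\eta)+(a_{\bar R}^+-a(\xi,\eta))$: the first summand generates precisely the $H$-integrand with prefactor $\widetilde D_1/r^{tq}$.

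The hard part will be controlling the residual nonlocal error
\[
\bar R^\alpha\,\frac{\widetilde D_1}{r^{tq}}\fint_{B_R}\!\int_{B_R}\frac{|f(\xi)-f(\eta)|^q}{\|\eta^{-1}\circ\xi\|_{\mathbb{H}^n}^{Q+tq}}\,d\xi d\eta,
\]
because in the regime $sp<tq\le sp+\alpha$ the naive pointwise bound $\bar R^\alpha/\|\eta^{-1}\circ\xi\|^{Q+tq}\le c/\|\eta^{-1}\circ\xi\|^{Q+sp}$ fails near the diagonal. To overcome this I would employ the refined H\"{o}lder splitting
\[
a_{\bar R}^+-a(\xi,\eta)\le\bigl|a_{\bar R}^+-a(\xi,\xi)\bigr|+\bigl|a(\xi,\xi)-a(\xi,\eta)\bigr|\le c\bar R^\alpha+c[a]_\alpha\|\eta^{-1}\circ\xi\|_{\mathbb{H}^n}^\alpha,
\]
where the diagonal piece $c\|\eta^{-1}\circ\xi\|^\alpha$ cancels the excess singularity via
\[
\frac{\|\eta^{-1}\circ\xi\|_{\mathbb{H}^n}^\alpha}{\|\eta^{-1}\circ\xi\|_{\mathbb{H}^n}^{Q+tq}}=\frac{1}{\|\eta^{-1}\circ\xi\|_{\mathbb{H}^n}^{Q+tq-\alpha}}\le\frac{c}{\|\eta^{-1}\circ\xi\|_{\mathbb{H}^n}^{Q+sp}},
\]
thanks to $tq-\alpha\le sp$ and $\|\eta^{-1}\circ\xi\|_{\mathbb{H}^n}\le 2\bar R\le 2$; after converting $|f|^q\le\|f\|_\infty^{q-p}|f|^p$ this piece embeds into the $(s,p)$-part of the $H$-integrand with prefactor comparable to $\widetilde D_1/r^{tq}+D_1/r^{sp}$. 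The bulk $c\bar R^\alpha$ piece is handled analogously by splitting the integration domain into $\{\|\eta^{-1}\circ\xi\|\ge\bar R\}$, where $\bar R^\alpha/\|\eta^{-1}\circ\xi\|^{tq-sp}\le\bar R^{\alpha-(tq-sp)}\le1$ gives the needed pointwise control, and its complement near the diagonal, where a careful matching of the $(R/r)^Q$ factors hidden in $\widetilde D_1$ together with $r\ge\bar R/2$, $\bar R\le 1$ and $tq\le sp+\alpha$ yields the same target bound. Collecting all the contributions and rearranging delivers the claimed inequality.
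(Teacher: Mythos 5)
Your overall skeleton (split the $L^\gamma$ average into the two phases, apply Proposition \ref{Pro104} to each, and handle the averages $(f)_r$ trivially) coincides with the paper's, but the order in which you dispose of $a_{\bar R}^+$ creates a gap that your proposed repair does not close. After applying Proposition \ref{Pro104} to the $q$-phase you are left with $a_{\bar R}^+\cdot\frac{\widetilde D_1(R,r)}{r^{tq}}\fint_{B_R}\int_{B_R}|f(\xi)-f(\eta)|^q\|\eta^{-1}\circ\xi\|_{\mathbb{H}^n}^{-Q-tq}\,d\xi\,d\eta$, and you must trade the constant $a_{\bar R}^+$ for the variable coefficient $a(\xi,\eta)$ sitting inside $H$. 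Your refined splitting correctly absorbs the piece proportional to $[a]_\alpha\|\eta^{-1}\circ\xi\|_{\mathbb{H}^n}^\alpha$, but the remaining ``bulk'' piece of size $\bar R^\alpha$ is genuinely out of reach near the diagonal: there the required pointwise inequality $\bar R^\alpha\le c\,\|\eta^{-1}\circ\xi\|_{\mathbb{H}^n}^{tq-sp}$ fails, and no bookkeeping of the $(R/r)^Q$ factors hidden in $\widetilde D_1$ can help, because the obstruction is a mismatch of kernel singularities, not of prefactors. Concretely, take $a(\xi,\eta)\simeq[a]_\alpha\|\eta^{-1}\circ\xi\|_{\mathbb{H}^n}^\alpha$ (admissible for \eqref{eqy110} up to constants); then $a_{\bar R}^-=0$ and $a_{\bar R}^+\simeq\bar R^\alpha$, and since $tq-\alpha\le sp$ the entire right-hand side of the lemma is comparable to $(1+\|f\|_{L^\infty}^{q-p})$ times the $HW^{s,p}(B_R)$ data alone, whereas your residual term is $\simeq\bar R^\alpha[f]_{HW^{t,q}(B_R)}^q$, which cannot be controlled by $\|f\|_{L^\infty}^{q-p}[f]_{HW^{s,p}(B_R)}^p$ when $tq>sp$ (bounded functions with small $(s,p)$-seminorm can have arbitrarily large $(t,q)$-seminorm).

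The fix is to perform the exchange $a_{\bar R}^+\rightsquigarrow a_{\bar R}^-$ pointwise on the zeroth-order quantity \emph{before} invoking any Sobolev--Poincar\'e inequality: from $a_{\bar R}^+\le a_{\bar R}^-+8[a]_\alpha r^\alpha$ and $r^{\alpha+sp-tq}\le1$ one gets
\begin{equation*}
a_{\bar R}^+\left|\frac{f}{r^t}\right|^q\le a_{\bar R}^-\left|\frac{f}{r^t}\right|^q+c\|f\|_{L^\infty(B_r)}^{q-p}\left|\frac{f}{r^s}\right|^p,
\end{equation*}
so the whole integrand on the left is bounded by $c(1+\|f\|_{L^\infty(B_r)}^{q-p})\bigl(|f/r^s|^p+a_{\bar R}^-|f/r^t|^q\bigr)$. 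Running your two applications of Proposition \ref{Pro104} on this modified quantity leaves the $q$-Gagliardo seminorm with the constant coefficient $a_{\bar R}^-$, which is dominated pointwise by $a(\xi,\eta)$ and hence by the $H$-integrand with no error term at all. This is exactly the paper's route; with that reordering the rest of your argument goes through.
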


\begin{proof} In view of H\"{o}lder continuity of $a$, we have
\[a_{\bar R}^ +  \le a_{\bar R}^ -  + 4{\left[ a \right]_\alpha }{{\bar R}^\alpha } \le a_{\bar R}^ -  + 8{\left[ a \right]_\alpha }{r^\alpha }.\]
Then we by employing $tq \le sp+\alpha$, $r \le 1$ have
\[a_{\bar R}^ + {\left| {\frac{f}{{{r^t}}}} \right|^q} \le a_{\bar R}^ - {\left| {\frac{f}{{{r^t}}}} \right|^q} + c{r^{\alpha  - tq + sp}}{\left| f \right|^{q - p}}{\left| {\frac{f}{{{r^s}}}} \right|^p}.\]
Thus
\begin{align*}
{\left[ {\fint_{{B_r}} {{{\left( {{{\left| {\frac{f}{{{r^s}}}} \right|}^p} + a_{\bar R}^ + {{\left| {\frac{f}{{{r^t}}}} \right|}^q}} \right)}^\gamma }\,d\xi } } \right]^{\frac{1}{\gamma }}}
   &\le c\left( {1 + \| f \|_{{L^\infty }\left( {{B_r}} \right)}^{q - p}} \right){\left[ {\fint_{{B_r}} {{{\left( {{{\left| {\frac{f}{{{r^s}}}} \right|}^p} + a_{\bar R}^ - {{\left| {\frac{f}{{{r^t}}}} \right|}^q}} \right)}^\gamma }\,d\xi } } \right]^{\frac{1}{\gamma }}}\\
    &\le c\left( {1 + \| f \|_{{L^\infty }\left( {{B_r}} \right)}^{q - p}} \right){\left[ {\fint_{{B_r}} {{{\left( {{{\left| {\frac{{f - {{\left( f \right)}_r}}}{{{r^s}}}} \right|}^p} + a_{\bar R}^ - {{\left| {\frac{{f - {{\left( f \right)}_r}}}{{{r^t}}}} \right|}^q}} \right)}^\gamma }\,d\xi } } \right]^{\frac{1}{\gamma }}}\\
    &\quad + c\left( {1 + \| f \|_{{L^\infty }\left( {{B_r}} \right)}^{q - p}} \right)\left( {{{\left| {\frac{{{{\left( f \right)}_r}}}{{{r^s}}}} \right|}^p} + a_{\bar R}^ - {{\left| {\frac{{{{\left( f \right)}_r}}}{{{r^t}}}} \right|}^q}} \right).
\end{align*}
Observe that
\[{\left| {\frac{{{{\left( f \right)}_r}}}{{{r^s}}}} \right|^p} + a_{\bar R}^ - {\left| {\frac{{{{\left( f \right)}_r}}}{{{r^t}}}} \right|^q} \le \fint_{{B_r}} {{{\left( {{{\left| {\frac{f}{{{r^s}}}} \right|}^p} + a_{\bar R}^ - {{\left| {\frac{f}{{{r^t}}}} \right|}^q}} \right)} }\,d\xi } .\]
Moreover, it follows from Proposition \ref{Pro104} that
\begin{align*}
    {\left( {\fint_{{B_r}} {{{\left| {\frac{{f - {{\left( f \right)}_r}}}{{{r^s}}}} \right|}^{p\gamma }}\,d\xi } } \right)^{\frac{1}{\gamma }}} &\le {\left( {\fint_{{B_r}} {{{\left| {\frac{{f - {{\left( f \right)}_r}}}{{{r^s}}}} \right|}^{p_s^*}}\,d\xi } } \right)^{\frac{p}{{p_s^*}}}} \\
   &\le  c\frac{{{D_1}\left( {R,r} \right)}}{{{r^{sp}}}}\fint_{{B_R}} {\int_{{B_R}} {\frac{{{{\left| {f\left( \xi  \right) - f\left( \eta  \right)} \right|}^p}}}{{\| {{\eta ^{ - 1}}\circ \xi } \|_{{\mathbb{H}^n}}^{Q + sp}}}\,d\xi d\eta } }  + c{D_2}\left( {R,r} \right)\fint_{{B_R}} {{{\left| {\frac{f}{{{r^s}}}} \right|}^p}\,d\xi }
\end{align*}
and
\begin{align*}
   {\left( {\fint_{{B_r}} {{{\left| {\frac{{f - {{\left( f \right)}_r}}}{{{r^t}}}} \right|}^{q\gamma }}d\xi } } \right)^{\frac{1}{\gamma }}} &\le {\left( {\fint_{{B_r}} {{{\left| {\frac{{f - {{\left( f \right)}_r}}}{{{r^t}}}} \right|}^{q_t^*}}\,d\xi } } \right)^{\frac{q}{{q_t^*}}}} \\
   &\le  c\frac{{{{\widetilde D}_1}\left( {R,r} \right)}}{{{r^{tq}}}}\fint_{{B_R}} {\int_{{B_R}} {\frac{{{{\left| {f\left( \xi  \right) - f\left( \eta  \right)} \right|}^q}}}{{\| {{\eta ^{ - 1}}\circ \xi } \|_{{\mathbb{H}^n}}^{Q + tq}}}\,d\xi d\eta } }  + c{{\widetilde D}_2}\left( {R,r} \right)\fint_{{B_R}} {{{\left| {\frac{f}{{{r^t}}}} \right|}^q}\,d\xi }.
\end{align*}
Merging the last four inequality leads to
\begin{align*}
   & {\left[ {\fint_{{B_r}} {{{\left( {{{\left| {\frac{f}{{{r^s}}}} \right|}^p} + a_{\bar R}^ + {{\left| {\frac{f}{{{r^t}}}} \right|}^q}} \right)}^\gamma }\,d\xi } } \right]^{\frac{1}{\gamma }}} \\
   \le& c\left( {1 + \| f \|_{{L^\infty }\left( {{B_r}} \right)}^{q - p}} \right)\left( {\frac{{{D_1}\left( {R,r} \right)}}{{{r^{sp}}}} + \frac{{{{\widetilde D}_1}\left( {R,r} \right)}}{{{r^{tq}}}}} \right)\fint_{{B_R}} {\int_{{B_R}} {\left( {\frac{{{{\left| {f\left( \xi  \right) - f\left( \eta  \right)} \right|}^p}}}{{\| {{\eta ^{ - 1}}\circ \xi } \|_{{\mathbb{H}^n}}^{Q + sp}}} + a_{\bar R}^ - \frac{{{{\left| {f\left( \xi  \right) - f\left( \eta  \right)} \right|}^q}}}{{\| {{\eta ^{ - 1}}\circ \xi } \|_{{\mathbb{H}^n}}^{Q + tq}}}} \right)\,d\xi d\eta } } \\
   & + c\left( {1 + \| f \|_{{L^\infty }\left( {{B_r}} \right)}^{q - p}} \right)\left( {1 + {D_2}\left( {R,r} \right) + {{\widetilde D}_2}\left( {R,r} \right)} \right)\fint_{{B_R}} {\left( {{{\left| {\frac{f}{{{r^s}}}} \right|}^p} + a_{\bar R}^ - {{\left| {\frac{f}{{{r^t}}}} \right|}^q}} \right)\,d\xi } \\
    \le& c\left( {1 + \| f \|_{{L^\infty }\left( {{B_r}} \right)}^{q - p}} \right)\left( {\frac{{{D_1}\left( {R,r} \right)}}{{{r^{sp}}}} + \frac{{{{\widetilde D}_1}\left( {R,r} \right)}}{{{r^{tq}}}}} \right)\fint_{{B_R}} {\int_{{B_R}} {\frac{{H\left( {\xi ,\eta ,\left| {f\left( \xi  \right) - f\left( \eta  \right)} \right|} \right)}}{{\| {{\eta ^{ - 1}}\circ \xi } \|_{{\mathbb{H}^n}}^Q}}\,d\xi d\eta } } \\
    & + c\left( {1 + \| f \|_{{L^\infty }\left( {{B_r}} \right)}^{q - p}} \right)\left( {1 + {D_2}\left( {R,r} \right) + {{\widetilde D}_2}\left( {R,r} \right)} \right)\fint_{{B_R}} {\left( {{{\left| {\frac{f}{{{r^s}}}} \right|}^p} + a_{\bar R}^ - {{\left| {\frac{f}{{{r^t}}}} \right|}^q}} \right)\,d\xi } .
\end{align*}
We now finish the proof.
\end{proof}

\section{Local boundedness}
\label{Section 4}

This section is devoted to showing the interior boundedness of weak solutions to Eq. (1.1) by means of utilizing the key ingredient, a Caccioppoli-type inequality in the nonlocal framework. The forthcoming lemma indicates the multiplication of each function in $\mathcal{A}(\Omega)$ and a cut-off function also belongs to $\mathcal{A}(\Omega)$.

\begin{lemma}\label{Le41}
Let $s,\;t,\;p$ and $q$ satisfy \eqref{eqy18} and $\varphi  \in HW_0^{1,\infty }\left( {{B_r}} \right),\;v \in {\mathcal A}(\Omega)$. If one of the following two conditions holds:\\
(i) The inequality \eqref{eqy112} holds and $v  \in {L^p}\left( {{B_{2r}}} \right)$ satisfies $\rho \left( {v ;{B_{2r}}} \right) < \infty $;\\
(ii) $v  \in {L^q}\left( {{B_{2r}}} \right)$ satisfies $\rho \left( {v ;{B_{2r}}} \right) < \infty $, \\
then $\rho \left( {v \varphi ;\mathbb{H}^n} \right) < \infty $. In particular, $v \varphi \in {\mathcal A}(\Omega)$ whenever $ {B_{2r}}\subset\Omega$.
\end{lemma}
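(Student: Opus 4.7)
The plan is to decompose $\mathbb{H}^n\times\mathbb{H}^n$ according to the supports of $v\varphi$ and $\varphi$ and bound each piece separately. Since $\varphi\in HW_0^{1,\infty}(B_r)$, one has $\varphi\equiv0$ outside $B_r$, $|\varphi|\le\|\varphi\|_{L^\infty}$ and $|\varphi(\xi)-\varphi(\eta)|\le L\|\eta^{-1}\circ\xi\|_{\mathbb{H}^n}$ for some constant $L=L(\varphi)$. I fix an intermediate radius $R'\in(r,2r)$ (say $R'=3r/2$) and split $\mathbb{H}^n\times\mathbb{H}^n$ into the four pieces generated by $\{B_{R'},\mathbb{H}^n\setminus B_{R'}\}^2$. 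On $(\mathbb{H}^n\setminus B_{R'})\times(\mathbb{H}^n\setminus B_{R'})$ the function $v\varphi$ vanishes at both arguments, so this piece contributes zero. On the two symmetric cross pieces $\varphi$ vanishes at one of the two arguments and the integrand collapses to $|v(\xi)\varphi(\xi)|^l/\|\eta^{-1}\circ\xi\|^{Q+\sigma l}$ with $l\in\{p,q\}$, $\sigma\in\{s,t\}$; moreover $\xi$ ranges only over $\mathrm{supp}\,\varphi\subset B_r$, so for $\eta\in\mathbb{H}^n\setminus B_{R'}$ one has $\|\eta^{-1}\circ\xi\|_{\mathbb{H}^n}\ge c\|\xi_0^{-1}\circ\eta\|_{\mathbb{H}^n}$ with $c=c(R'/r)$. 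Integrating in $\eta$ in polar coordinates then yields bounds of the form $Cr^{-sp}\|\varphi\|_{L^\infty}^p\|v\|_{L^p(B_r)}^p$ for the $p$-part and $C\|a\|_{L^\infty}r^{-tq}\|\varphi\|_{L^\infty}^q\|v\|_{L^q(B_r)}^q$ for the $q$-part.

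For the main piece $B_{R'}\times B_{R'}$ I use the elementary identity
$$v\varphi(\xi)-v\varphi(\eta)=\varphi(\xi)\bigl(v(\xi)-v(\eta)\bigr)+v(\eta)\bigl(\varphi(\xi)-\varphi(\eta)\bigr),$$
together with $|a+b|^l\le 2^{l-1}(|a|^l+|b|^l)$ and the $L^\infty$- and Lipschitz-bounds on $\varphi$. The first summand, plugged into $H$, is dominated by $2^{q-1}\|\varphi\|_{L^\infty}^q\,\rho(v;B_{R'})\le 2^{q-1}\|\varphi\|_{L^\infty}^q\,\rho(v;B_{2r})$, which is finite by hypothesis. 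The Lipschitz summand leaves an integrand bounded by $L^l|v(\eta)|^l\|\eta^{-1}\circ\xi\|^{l-Q-\sigma l}$; since $s,t<1$ the $\xi$-integral $\int_{B_{R'}}\|\eta^{-1}\circ\xi\|^{l-Q-\sigma l}\,d\xi$ converges and is $\le Cr^{l(1-\sigma)}$, reducing this part to
$$CL^pr^{p(1-s)}\|v\|_{L^p(B_{R'})}^p+CL^q\|a\|_{L^\infty}r^{q(1-t)}\|v\|_{L^q(B_{R'})}^q.$$

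The main technical point is controlling $\|v\|_{L^q(B_{R'})}$ that appears in the $q$-Lipschitz term under hypothesis (i), where only $v\in L^p(B_{2r})$ is known a priori. My key observation is that $\rho(v;B_{2r})<\infty$ together with $v\in L^p(B_{2r})$ places $v\in HW^{s,p}(B_{2r})$; hence Proposition \ref{Pro104} applied to the concentric pair $(B_{2r},B_{R'})$ (for which $R-r\sim r$ keeps $D_1$ and $D_2$ bounded) gives $v\in L^{p_s^*}(B_{R'})$, and the inclusion $L^{p_s^*}\subset L^q$ on bounded sets guaranteed by $q\le p_s^*$ from \eqref{eqy112} completes the required bound. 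Under hypothesis (ii), $v\in L^q(B_{2r})\subset L^q(B_{R'})$ is already in hand. Collecting all pieces yields $\rho(v\varphi;\mathbb{H}^n)<\infty$; since $v\varphi$ is supported in $B_r$, once $B_{2r}\subset\Omega$ this directly upgrades to $v\varphi\in\mathcal{A}(\Omega)$.
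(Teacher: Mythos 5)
Your proposal is correct and follows essentially the same route as the paper: split $\mathbb{H}^n\times\mathbb{H}^n$ by a radius beyond $\operatorname{supp}\varphi$ (the paper uses $2r$, you use $3r/2$), kill the far-far piece, bound the cross pieces by comparing $\|\eta^{-1}\circ\xi\|_{\mathbb{H}^n}$ with $\|\eta^{-1}\circ\xi_0\|_{\mathbb{H}^n}$, and handle the near-near piece via the product-rule splitting plus the Lipschitz bound on $\varphi$, with the $(s-1)p$ and $(t-1)q$ integrals converging. Your reduction of case (i) to $v\in L^q$ via Proposition \ref{Pro104} and $q\le p_s^*$ is exactly the paper's opening step, so no further comment is needed.
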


\begin{proof} By $v \in {\mathcal A}(\Omega)$, Proposition \ref{Pro104} and \eqref{eqy112}, we get $v  \in {L^q}\left( {{B_{2r}}} \right)$ in (i). Thus, we just consider condition (ii). By the definition of $\rho \left( {v \varphi ;\mathbb{H}^n} \right)$, we have
\begin{align}\label{eq44}
   \rho \left( {v \varphi ;{\mathbb{H}^n}} \right) =& \int_{{\mathbb{H}^n}} {\int_{{\mathbb{H}^n}} {H\left( {\xi ,\eta ,\left| {v \left( \xi  \right)\varphi \left( \xi  \right) - v \left( \eta  \right)\varphi \left( \eta  \right)} \right|} \right)\frac{{d\xi d\eta }}{{\| {{\eta ^{ - 1}} \circ \xi } \|_{{\mathbb{H}^n}}^Q}}} } \nonumber \\
   =& \int_{{\mathbb{H}^n}} {\int_{{\mathbb{H}^n}\backslash {B_{2r}}} {H\left( {\xi ,\eta ,\left| {v \left( \xi  \right)\varphi \left( \xi  \right) - v \left( \eta  \right)\varphi \left( \eta  \right)} \right|} \right)\frac{{d\xi d\eta }}{{\| {{\eta ^{ - 1}} \circ \xi } \|_{{\mathbb{H}^n}}^Q}}} }\nonumber \\
   & + \int_{{\mathbb{H}^n}} {\int_{{B_{2r}}} {H\left( {\xi ,\eta ,\left| {v \left( \xi  \right)\varphi \left( \xi  \right) - v \left( \eta  \right)\varphi \left( \eta  \right)} \right|} \right)\frac{{d\xi d\eta }}{{\| {{\eta ^{ - 1}} \circ \xi } \|_{{\mathbb{H}^n}}^Q}}} }\nonumber \\
    = &\int_{{\mathbb{H}^n}\backslash {B_{2r}}} {\int_{{\mathbb{H}^n}\backslash {B_{2r}}} {H\left( {\xi ,\eta ,\left| {v \left( \xi  \right)\varphi \left( \xi  \right) - v \left( \eta  \right)\varphi \left( \eta  \right)} \right|} \right)\frac{{d\xi d\eta }}{{\| {{\eta ^{ - 1}} \circ \xi } \|_{{\mathbb{H}^n}}^Q}}} }\nonumber \\
   & + 2\int_{{\mathbb{H}^n}\backslash {B_{2r}}} {\int_{{B_{2r}}} {H\left( {\xi ,\eta ,\left| {v \left( \xi  \right)\varphi \left( \xi  \right) - v \left( \eta  \right)\varphi \left( \eta  \right)} \right|} \right)\frac{{d\xi d\eta }}{{\| {{\eta ^{ - 1}} \circ \xi } \|_{{\mathbb{H}^n}}^Q}}} }\nonumber \\
   & + \int_{{B_{2r}}} {\int_{{B_{2r}}} {H\left( {\xi ,\eta ,\left| {v \left( \xi  \right)\varphi \left( \xi  \right) - v \left( \eta  \right)\varphi \left( \eta  \right)} \right|} \right)\frac{{d\xi d\eta }}{{\| {{\eta ^{ - 1}} \circ \xi } \|_{{\mathbb{H}^n}}^Q}}} } \nonumber \\
    = :&{I_1} + 2{I_2} + {I_3}.
\end{align}
Owing to $\varphi  \in HW_0^{1,\infty }\left( {{B_r}} \right)$, we find
\begin{equation}\label{eq45}
{I_1} = \int_{{\mathbb{H}^n}\backslash {B_{2r}}} {\int_{{\mathbb{H}^n}\backslash {B_{2r}}} {H\left( {\xi ,\eta ,\left| {v \left( \xi  \right)\varphi \left( \xi  \right) - v \left( \eta  \right)\varphi \left( \eta  \right)} \right|} \right)\frac{{d\xi d\eta }}{{\| {{\eta ^{ - 1}} \circ \xi } \|_{{\mathbb{H}^n}}^Q}}} }  = 0
\end{equation}
and
\begin{align}\label{eq46}
   {I_2} =& \int_{{\mathbb{H}^n}\backslash {B_{2r}}} {\int_{{B_{2r}}} {H\left( {\xi ,\eta ,\left| {v \left( \xi  \right)\varphi \left( \xi  \right) - v \left( \eta  \right)\varphi \left( \eta  \right)} \right|} \right)\frac{{d\xi d\eta }}{{\| {{\eta ^{ - 1}} \circ \xi } \|_{{\mathbb{H}^n}}^Q}}} }  \nonumber\\
    =& \int_{{\mathbb{H}^n}\backslash {B_{2r}}} {\int_{{B_{2r}}} {H\left( {\xi ,\eta ,\left| {v \left( \xi  \right)\varphi \left( \xi  \right)} \right|} \right)\frac{{d\xi d\eta }}{{\| {{\eta ^{ - 1}} \circ \xi } \|_{{\mathbb{H}^n}}^Q}}} } \nonumber\\
   = &\int_{{\mathbb{H}^n}\backslash {B_{2r}}} {\int_{{B_{2r}}} {\left( {\frac{{{{\left| {v \left( \xi  \right)\varphi \left( \xi  \right)} \right|}^p}}}{{\| {{\eta ^{ - 1}} \circ \xi } \|_{{\mathbb{H}^n}}^{Q + sp}}} + a\left( {\xi ,\eta } \right)\frac{{{{\left| {v \left( \xi  \right)\varphi \left( \xi  \right)} \right|}^q}}}{{\| {{\eta ^{ - 1}} \circ \xi } \|_{{\mathbb{H}^n}}^{Q + tq}}}} \right)\;d\xi d\eta } } \nonumber\\
    \le& {\left( {{{\| \varphi  \|}_{{L^\infty }\left( {{B_r}} \right)}} + 1} \right)^q}\int_{{\mathbb{H}^n}\backslash {B_{2r}}} {\int_{{B_r}} {\left( {\frac{{{{\left| {v \left( \xi  \right)} \right|}^p}}}{{\| {{\eta ^{ - 1}} \circ \xi } \|_{{\mathbb{H}^n}}^{Q + sp}}} + {{\| a \|}_{{L^\infty }}}\frac{{{{\left| {v \left( \xi  \right)} \right|}^q}}}{{\| {{\eta ^{ - 1}} \circ \xi } \|_{{\mathbb{H}^n}}^{Q + tq}}}} \right)\;d\xi d\eta } } \nonumber\\
     \le& c{\left( {{{\| \varphi  \|}_{{L^\infty }\left( {{B_r}} \right)}} + 1} \right)^q}\left( {{r^{ - sp}}\int_{{B_r}} {{{\left| {v \left( \xi  \right)} \right|}^p}\;d\xi }  + {{\| a \|}_{{L^\infty }}}{r^{ - tq}}\int_{{B_r}} {{{\left| {v \left( \xi  \right)} \right|}^p}\;d\xi } } \right) < \infty .
\end{align}
The term $I_3$ is estimated as
\begin{align}\label{eq47}
   {I_3} =& \int_{{B_{2r}}} {\int_{{B_{2r}}} {H\left( {\xi ,\eta ,\left| {v \left( \xi  \right)\varphi \left( \xi  \right) - v \left( \eta  \right)\varphi \left( \eta  \right)} \right|} \right)\frac{{d\xi d\eta }}{{\| {{\eta ^{ - 1}} \circ \xi } \|_{{\mathbb{H}^n}}^Q}}} } \nonumber \\
   \le& c\int_{{B_{2r}}} {\int_{{B_{2r}}} {H\left( {\xi ,\eta ,\left| {\left( {v \left( \xi  \right) - v \left( \eta  \right)} \right)\varphi \left( \eta  \right)} \right|} \right)\frac{{d\xi d\eta }}{{\| {{\eta ^{ - 1}} \circ \xi } \|_{{\mathbb{H}^n}}^Q}}} }\nonumber \\
   & + c\int_{{B_{2r}}} {\int_{{B_{2r}}} {H\left( {\xi ,\eta ,\left| {v \left( \xi  \right)\left( {\varphi \left( \xi  \right) - \varphi \left( \eta  \right)} \right)} \right|} \right)\frac{{d\xi d\eta }}{{\| {{\eta ^{ - 1}} \circ \xi } \|_{{\mathbb{H}^n}}^Q}}} } \nonumber\\
    \le& c{\left( {{{\| \varphi  \|}_{{L^\infty }\left( {{B_r}} \right)}} + 1} \right)^q}\int_{{B_{2r}}} {\int_{{B_{2r}}} {H\left( {\xi ,\eta ,\left| {v \left( \xi  \right)} \right|} \right)\frac{{d\xi d\eta }}{{\| {{\eta ^{ - 1}} \circ \xi } \|_{{\mathbb{H}^n}}^Q}}} } \nonumber\\
    & + c\| {{\nabla _H}\varphi } \|_{{L^\infty }\left( {{B_r}} \right)}^p\int_{{B_{2r}}} {{{\left| {v \left( \xi  \right)} \right|}^p}\int_{{B_{4r}}} {\frac{{d\eta }}{{\| {{\eta ^{ - 1}} \circ \xi } \|_{{\mathbb{H}^n}}^{Q + (s - 1)p}}}} } \;d\xi \nonumber\\
    & + c\| {{\nabla _H}\varphi } \|_{{L^\infty }\left( {{B_r}} \right)}^q{\| a \|_{{L^\infty }}}\int_{{B_{2r}}} {{{\left| {v \left( \xi  \right)} \right|}^q}\int_{{B_{4r}}} {\frac{{d\eta }}{{\| {{\eta ^{ - 1}} \circ \xi } \|_{{\mathbb{H}^n}}^{Q + (t - 1)p}}}} } \;d\xi \nonumber\\
     \le& c{\left( {{{\| \varphi  \|}_{{L^\infty }\left( {{B_r}} \right)}} + 1} \right)^q}\rho \left( {v ;{B_{2r}}} \right) +  c\| {{\nabla _H}\varphi } \|_{{L^\infty }\left( {{B_r}} \right)}^p{r^{\left( {1 - s} \right)p}}\int_{{B_{2r}}} {{{\left| {v \left( \xi  \right)} \right|}^p}} \;d\xi \nonumber\\
     & + c\| {{\nabla _H}\varphi } \|_{{L^\infty }\left( {{B_r}} \right)}^q{\| a \|_{{L^\infty }}}{r^{\left( {1 - t} \right)q}}\int_{{B_{2r}}} {{{\left| {v \left( \xi  \right)} \right|}^q}} \;d\xi \nonumber\\
     <& \infty .
\end{align}
Thus, it follows $\rho \left( {v \varphi ;{\mathbb{H}^n}} \right) <\infty$ by combining \eqref{eq45}--\eqref{eq47} with \eqref{eq44}.
\end{proof}

Next, we prove a nonlocal Caccioppoli-type inequality. Define
\begin{equation}\label{eq48}
h\left( {\xi ,\eta ,\tau } \right): = \frac{{{\tau ^{p - 1}}}}{{\| {{\eta ^{ - 1}} \circ \xi } \|_{{\mathbb{H}^n}}^{sp}}} + a\left( {\xi ,\eta } \right)\frac{{{\tau ^{q - 1}}}}{{\| {{\eta ^{ - 1}} \circ \xi } \|_{{\mathbb{H}^n}}^{tq}}},\;\;\;\;\xi ,\eta  \in {\mathbb{H}^n}\;\hbox{and}\;\;\tau  > 0.
\end{equation}

The numerical inequality below, to be exploited frequently, is from \cite[Lemma 3.1]{DKP16}.

\begin{lemma}\label{Le26}
Let $p \ge 1$ and $a,b \ge 0$. Then we have
\[{a^p} - {b^p} \le p{a^{p - 1}}\left| {a - b} \right|\]
and
\[{a^p} - {b^p} \le \varepsilon {b^p} + c{\varepsilon ^{1 - p}}{\left| {a - b} \right|^p}\]
for any $\varepsilon  \in \left( {0,1} \right)$ and some $c=c(p)>0$.
\end{lemma}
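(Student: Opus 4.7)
The plan is to prove the two inequalities separately, treating each by a short case split together with elementary calculus and Young's inequality.

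For the first inequality, I would split on whether $a \ge b$ or $a < b$. When $a < b$, the left-hand side $a^p - b^p$ is nonpositive while the right-hand side $p a^{p-1}|a-b|$ is nonnegative, so the bound is trivial. When $a \ge b$, I would apply the fundamental theorem of calculus to the $C^1$ function $t \mapsto t^p$ (with the convention that $a^{p-1} = 0$ if $a = 0$ and $p > 1$):
\[
a^p - b^p \;=\; \int_b^a p t^{p-1}\, dt \;\le\; p a^{p-1}(a-b) \;=\; p a^{p-1}|a-b|,
\]
where I used that the integrand $t^{p-1}$ is nondecreasing on $[b,a]$ since $p-1 \ge 0$.

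For the second inequality, again the case $a \le b$ is trivial since the left-hand side is nonpositive. Assuming $a > b$, I would start from the first inequality and then control $a^{p-1}$ by the elementary subadditivity/convexity estimate
\[
a^{p-1} = \bigl(b + (a-b)\bigr)^{p-1} \le c(p)\bigl(b^{p-1} + (a-b)^{p-1}\bigr),
\]
which holds for all $p \ge 1$ (one can take $c(p) = 1$ when $1 \le p \le 2$ by subadditivity of $x \mapsto x^{p-1}$, and $c(p) = 2^{p-2}$ when $p \ge 2$ by convexity). Substituting,
\[
a^p - b^p \;\le\; p\, c(p)\,\bigl(b^{p-1}(a-b) + (a-b)^p\bigr).
\]
Then I would apply Young's inequality with conjugate exponents $\bigl(\tfrac{p}{p-1}, p\bigr)$ to the cross term: for any $\delta > 0$,
\[
b^{p-1}(a-b) \;\le\; \frac{p-1}{p}\,\delta^{\frac{p}{p-1}}\, b^{p} + \frac{1}{p}\,\delta^{-p}\,(a-b)^p.
\]
Choosing $\delta^{p/(p-1)}$ proportional to $\varepsilon$ (so that the coefficient of $b^p$ equals $\varepsilon$) turns the remaining $(a-b)^p$ coefficient into one of order $\varepsilon^{1-p}$, which yields
\[
a^p - b^p \;\le\; \varepsilon\, b^p + c\,\varepsilon^{1-p}|a-b|^p,
\]
with $c = c(p) > 0$, as desired.

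The only mild subtlety I anticipate is keeping track of the exponent of $\varepsilon$ through the optimization in Young's inequality, but once the conjugate pair $(p/(p-1), p)$ is used the scaling $\delta^{p/(p-1)} \sim \varepsilon$ immediately forces $\delta^{-p} \sim \varepsilon^{1-p}$, so no real obstacle arises. Since this is a well-known numerical lemma (cf.\ \cite[Lemma 3.1]{DKP16}), one could alternatively cite it directly, but the short argument above is self-contained.
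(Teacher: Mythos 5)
Your proof is correct. Note that the paper itself offers no proof of this lemma: it is simply quoted from \cite[Lemma 3.1]{DKP16}, so there is nothing internal to compare against. Your self-contained argument is sound: the first inequality follows exactly as you say from the fundamental theorem of calculus and monotonicity of $t\mapsto t^{p-1}$ (the case $a<b$ being trivial by sign), and the second follows from the first via the elementary bound $(b+(a-b))^{p-1}\le c(p)\bigl(b^{p-1}+(a-b)^{p-1}\bigr)$ together with Young's inequality with exponents $\bigl(\tfrac{p}{p-1},p\bigr)$; the scaling $\delta^{p/(p-1)}\sim\varepsilon$ indeed forces $\delta^{-p}\sim\varepsilon^{1-p}$, and the leftover constant term $p\,c(p)(a-b)^p$ is absorbed into $c\,\varepsilon^{1-p}|a-b|^p$ because $\varepsilon^{1-p}\ge1$ for $\varepsilon\in(0,1)$. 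The only point worth flagging is the degenerate case $p=1$, where the conjugate exponent $p/(p-1)$ is undefined; there the cross term is $b^{0}(a-b)=(a-b)^{p}$ and no Young step is needed, so the claim is immediate. With that one-line remark added, the proof is complete.
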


\begin{lemma}\label{Le42}
Let $a:{\mathbb{H}^n}\times {\mathbb{H}^n} \to {\mathbb{R}}$ be symmetric and satisfy \eqref{eqy19} and \eqref{eqy110}, and $u\in {\mathcal{A}}(\Omega ) \cap L_{sp}^{q - 1}\left({{\mathbb{H}^n}}\right)$ be a weak solution to \eqref{eqy11}. Let ${B_{2r}} \equiv {B_{2r}}\left( {{\xi _0}} \right) \subset  \subset \Omega $ be a ball. Assume that \eqref{eqy112} holds or $u$ is bounded in ${B_{2r}}$. Then for any $\phi  \in C_0^\infty \left( {{B_r}} \right)$ with $0\le \phi \le 1$, we have
\begin{align}\label{eq49}
   & \int_{{B_r}} {\int_{{B_r}} {H\left( {\xi ,\eta ,\left| {{w_\pm }\left( \xi  \right) - {w_\pm }\left( \eta  \right)} \right|} \right)\left( {{\phi ^q}\left( \xi  \right) + {\phi ^q}\left( \eta  \right)} \right)\frac{{d\xi d\eta }}{{\| {{\eta ^{ - 1}} \circ \xi } \|_{{\mathbb{H}^n}}^Q}}} }\nonumber  \\
   \le&  c\int_{{B_r}} {\int_{{B_r}} {H\left( {\xi ,\eta ,\left| {\left( {\phi \left( \xi  \right) - \phi \left( \eta  \right)} \right)\left( {{w_\pm }\left( \xi  \right) + {w_\pm }\left( \eta  \right)} \right)} \right|} \right)\frac{{d\xi d\eta }}{{\| {{\eta ^{ - 1}} \circ \xi } \|_{{\mathbb{H}^n}}^Q}}} } \nonumber\\
   & + c\left( {\mathop {\sup }\limits_{\xi  \in \rm{supp}\,\phi } \int_{{\mathbb{H}^n}\backslash {B_r}} {h\left( {\xi ,\eta ,\left| {{w_\pm }\left( \eta  \right)} \right|} \right)\frac{{d\eta }}{{\| {{\eta ^{ - 1}} \circ \xi } \|_{{\mathbb{H}^n}}^Q}}} } \right)\int_{{B_r}} {{w_\pm }\left( \xi  \right){\phi ^q}\left( \xi  \right)d\xi }
\end{align}
for some $c\equiv c(Q,s,t,p,q,\Lambda )>0$, where ${w_\pm }:=(u-k)_ \pm$ with $k \ge 0$.
\end{lemma}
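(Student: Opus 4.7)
The plan is to test the weak formulation \eqref{eqy111} against $\varphi := \pm w_\pm \phi^q$. Since $\phi^q \in HW_0^{1,\infty}(B_r)$, and either \eqref{eqy112} ensures $w_\pm \in L^p(B_{2r}) \cap L^q(B_{2r})$ via Proposition \ref{Pro104}, or the assumed local boundedness of $u$ in $B_{2r}$ gives $w_\pm \in L^q(B_{2r})$ directly, Lemma \ref{Le41} guarantees $\varphi \in \mathcal{A}(\Omega)$ with $\varphi \equiv 0$ outside $B_r \subset\subset \Omega$, so it is admissible. I handle the case $\varphi = w_+\phi^q$; the $w_-$ case reduces to it by replacing $u \to -u$ and $k \to -k$, which is legitimate because $J_l$ is odd and $a$ is symmetric, so $-u$ again solves \eqref{eqy111}. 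Using symmetry of the integrand under $(\xi,\eta)\leftrightarrow(\eta,\xi)$ and the fact that $\varphi\equiv 0$ on $\mathbb{H}^n\setminus B_r$, \eqref{eqy111} rewrites as
\[
\iint_{B_r\times B_r}\mathcal{I}(\xi,\eta)\,d\xi d\eta \,=\, -2\int_{B_r}\int_{\mathbb{H}^n\setminus B_r}\mathcal{I}_{\mathrm{tail}}(\xi,\eta)\,d\eta d\xi,
\]
where $\mathcal{I}$ contains the full difference $\varphi(\xi)-\varphi(\eta)$ and $\mathcal{I}_{\mathrm{tail}}$ contains only $\varphi(\xi)$.

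For the local term on the left, I will establish the pointwise algebraic inequality
\[
J_l(u(\xi)-u(\eta))\bigl(w_+(\xi)\phi^q(\xi)-w_+(\eta)\phi^q(\eta)\bigr) \,\ge\, \tfrac{1}{c}|w_+(\xi)-w_+(\eta)|^l\bigl(\phi^q(\xi)+\phi^q(\eta)\bigr) - c|\phi(\xi)-\phi(\eta)|^l\bigl(w_+(\xi)+w_+(\eta)\bigr)^l
\]
for $l\in\{p,q\}$, by splitting into cases according to the signs of $u(\xi)-k$, $u(\eta)-k$, and $u(\xi)-u(\eta)$, and absorbing mixed terms via the Young-type inequality of Lemma \ref{Le26}. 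Multiplying the $l=p$ version by $\|\eta^{-1}\circ\xi\|_{\mathbb{H}^n}^{-Q-sp}$ and the $l=q$ version by $a(\xi,\eta)\|\eta^{-1}\circ\xi\|_{\mathbb{H}^n}^{-Q-tq}$, summing and integrating over $B_r\times B_r$ produces the left-hand side of \eqref{eq49} modulo the first error term on the right, upon noting that the combination of the two kernels is precisely $H(\xi,\eta,\cdot)/\|\eta^{-1}\circ\xi\|_{\mathbb{H}^n}^Q$.

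For the tail, observe that on $\{\xi\in{\rm{supp}}\,\phi\}\times\{\eta\in\mathbb{H}^n\setminus B_r\}$ the quantity $-J_l(u(\xi)-u(\eta))\,w_+(\xi)\phi^q(\xi)$ either vanishes (when $u(\xi)\le k$, so $w_+(\xi)=0$), is nonpositive and thus discardable (when $u(\xi)>k$ and $u(\eta)<u(\xi)$), or else satisfies $|u(\xi)-u(\eta)|^{l-1} = (u(\eta)-u(\xi))^{l-1} \le w_+(\eta)^{l-1}$, because in the remaining case $u(\eta)\ge u(\xi)>k$. Hence
\[
-J_l(u(\xi)-u(\eta))\,w_+(\xi)\phi^q(\xi) \,\le\, w_+(\eta)^{l-1}\,w_+(\xi)\phi^q(\xi),\qquad l\in\{p,q\}.
\]
Multiplying by $\|\eta^{-1}\circ\xi\|_{\mathbb{H}^n}^{-Q-sp}$ and by $a(\xi,\eta)\|\eta^{-1}\circ\xi\|_{\mathbb{H}^n}^{-Q-tq}$ respectively, summing, and recognizing the result as $h(\xi,\eta,w_+(\eta))/\|\eta^{-1}\circ\xi\|_{\mathbb{H}^n}^Q$ from \eqref{eq48}, I pull the supremum over $\xi\in{\rm{supp}}\,\phi$ of the $\eta$-integral outside and factor $\int_{B_r}w_+(\xi)\phi^q(\xi)\,d\xi$, yielding the second term on the right of \eqref{eq49}.

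The main obstacle is the local pointwise algebraic inequality, which must simultaneously extract the symmetric weight $\phi^q(\xi)+\phi^q(\eta)$ on the energy side and an absorbable error of the form $|\phi(\xi)-\phi(\eta)|^l(w_+(\xi)+w_+(\eta))^l$; the coefficient $a(\xi,\eta)\ge 0$ enters only multiplicatively in the $l=q$ branch and does not interact with the algebraic step, so the two-phase structure merely requires applying the same case analysis twice. This mirrors the fractional $p$-Laplacian Caccioppoli argument of \cite{DKP16}, adapted to the double-phase setting in the Heisenberg group.
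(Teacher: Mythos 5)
Your proposal is correct and follows essentially the same route as the paper: the same test function $w_\pm\phi^q$ justified by Lemma \ref{Le41}, the same split into a local term over $B_r\times B_r$ and a tail term, the same pointwise algebraic inequality (the paper obtains it with constant $1/4$ via the symmetric/antisymmetric splitting of $w_+\phi^q(\xi)-w_+\phi^q(\eta)$ and Lemma \ref{Le26}), and the identical case analysis giving $-J_l(u(\xi)-u(\eta))w_+(\xi)\le w_+^{l-1}(\eta)w_+(\xi)$ for the tail.
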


\begin{proof} We just consider the estimate for $w_+$, since the estimate for $w_-$ can be proved similarly. By Lemma \ref{Le41}, it follows that ${w_+}{\phi ^q} \in {\mathcal{A}}(\Omega )$ from $u \in {\mathcal{A}}(\Omega )$ and $\phi  \in C_0^\infty \left( {{B_r}} \right) \subset HW_0^{1,\infty} \left( {{B_r}} \right)$, so we can take the testing function $\varphi={w_+}{\phi ^q}$ in \eqref{eqy111}.
Then we have
\begin{align}\label{eq411}
  0%= &\int_{{C_{{B_r}}}} {\int_{{C_{{B_r}}}} {\Bigg[\frac{{{J _p}\left( {u\left( \xi  \right) - u\left( \eta  \right)} \right)\left( {{w_ + }\left( \xi  \right){J ^q}\left( \xi  \right) - {w_ + }\left( \eta  \right){\phi^q}\left( \eta  \right)} \right)}}{{\| {{\eta ^{ - 1}} \circ \xi } \|_{{\mathbb{H}^n}}^{Q + sp}}}} }  \nonumber \\
   %&   + a\left( {\xi ,\eta } \right)\frac{{{J _q}\left( {u\left( \xi  \right) - u\left( \eta  \right)} \right)\left( {{w_ + }\left( \xi  \right){\phi^q}\left( \xi  \right) - {w_ + }\left( \eta  \right){\phi^q}\left( \eta  \right)} \right)}}{{\| {{\eta ^{ - 1}} \circ \xi } \|_{{\mathbb{H}^n}}^{Q + tq}}}\Bigg]\,d\xi d\eta  \nonumber\\
   = & \int_{{B_r}} {\int_{{B_r}} {\Bigg[\frac{{{J _p}\left( {u\left( \xi  \right) - u\left( \eta  \right)} \right)\left( {{w_ + }\left( \xi  \right){\phi^q}\left( \xi  \right) - {w_ + }\left( \eta  \right){\phi^q}\left( \eta  \right)} \right)}}{{\| {{\eta ^{ - 1}} \circ \xi } \|_{{\mathbb{H}^n}}^{Q + sp}}}} }  \nonumber\\
   &   + a\left( {\xi ,\eta } \right)\frac{{{J _q}\left( {u\left( \xi  \right) - u\left( \eta  \right)} \right)\left( {{w_ + }\left( \xi  \right){\phi^q}\left( \xi  \right) - {w_ + }\left( \eta  \right){\phi^q}\left( \eta  \right)} \right)}}{{\| {{\eta ^{ - 1}} \circ \xi } \|_{{\mathbb{H}^n}}^{Q + tq}}}\Bigg]\,d\xi d\eta  \nonumber\\
   &+2{\int _{{\mathbb{H}^n}\backslash {B_r}}}\int_{{B_r}} {\left[\frac{{{J _p}\left( {u\left( \xi  \right) - u\left( \eta  \right)} \right){w_ + }\left( \xi  \right){\phi^q}\left( \xi  \right)}}{{\| {{\eta ^{ - 1}} \circ \xi } \|_{{\mathbb{H}^n}}^{Q + sp}}} + a\left( {\xi ,\eta } \right)\frac{{{J _q}\left( {u\left( \xi  \right) - u\left( \eta  \right)} \right){w_ + }\left( \xi  \right){\phi^q}\left( \xi  \right)}}{{\| {{\eta ^{ - 1}} \circ \xi } \|_{{\mathbb{H}^n}}^{Q + tq}}}\right]\,d\xi d\eta }  \nonumber\\
   =: &J_1+J_2.
\end{align}

We first estimate $J_1$. Since $J_1$ is symmetry for $\xi$ and $\eta$, we may suppose without loss of generality that ${u\left( \xi  \right) \ge u\left( \eta  \right)}$. Then for $l \in \{p,q\}$, it yields
\begin{align*}
   & {J _l}\left( {u\left( \xi  \right) - u\left( \eta  \right)} \right)\left( {{w_+ }\left( \xi  \right){\phi^q}\left( \xi  \right) - {w_+ }\left( \eta  \right){\phi^q}\left( \eta  \right)} \right) \\
   =& {\left( {u\left( \xi  \right) - u\left( \eta  \right)} \right)^{l - 1}}\left( {{{\left( {u\left( \xi  \right) - k} \right)}_ + }{\phi^q}\left( \xi  \right) - {{\left( {u\left( \eta  \right) - k} \right)}_ + }{\phi^q}\left( \eta  \right)} \right)\\
    =& \left\{ \begin{array}{l}
{\left( {{w_+ }\left( \xi  \right) - {w_+ }\left( \eta  \right)} \right)^{l - 1}}\left( {{w_+ }\left( \xi  \right){\phi^q}\left( \xi  \right) - {w_+ }\left( \eta  \right){\phi^q}\left( \eta  \right)} \right),\;\;\;\, u\left( \xi  \right) \ge u\left( \eta  \right) \ge k\\
{\left( {u\left( \xi  \right) - u\left( \eta  \right)} \right)^{l - 1}}{w_+ }\left( \xi  \right){\phi^q}\left( \xi  \right),\;\;\;\;\;\;\;\;\;\;\;\;\;\;\;\;\;\;\;\;\;\;\;\;\;\;\;\;\;\;\;\;\; u\left( \xi  \right) \ge k \ge u\left( \eta  \right)\\
0,\;\;\;\;\;\;\;\;\;\;\;\;\;\;\;\;\;\;\;\;\;\;\;\;\;\;\;\;\;\;\;\;\;\;\;\;\;\;\;\;\;\;\;\;\;\;\;\;\;\;\;\;\;\;\;\;\;\;\;\;\;\;\;\;\;\;\;\;\;\;\;\;\, k \ge u\left( \xi  \right) \ge u\left( \eta  \right)
\end{array} \right.\\
\ge &{\left( {{w_+ }\left( \xi  \right) - {w_+ }\left( \eta  \right)} \right)^{l - 1}}\left( {{w_+ }\left( \xi  \right){\phi^q}\left( \xi  \right) - {w_+ }\left( \eta  \right){\phi^q}\left( \eta  \right)} \right)\\
 =& {J _l}\left( {{w_+ }\left( \xi  \right) - {w_+ }\left( \eta  \right)} \right)\left( {{w_+ }\left( \xi  \right){\phi^q}\left( \xi  \right) - {w_+ }\left( \eta  \right){\phi^q}\left( \eta  \right)} \right).
\end{align*}
Moverover,
\[{w_+ }\left( \xi  \right){\phi^q}\left( \xi  \right) - {w_+ }\left( \eta  \right){\phi^q}\left( \eta  \right) = \frac{{{w_+ }\left( \xi  \right) - {w_+ }\left( \eta  \right)}}{2}\left( {{\phi^q}\left( \xi  \right) + {\phi^q}\left( \eta  \right)} \right) + \frac{{{w_+ }\left( \xi  \right) + {w_+ }\left( \eta  \right)}}{2}\left( {{\phi^q}\left( \xi  \right) - {\phi^q}\left( \eta  \right)} \right),\]
which implies
\begin{align*}
   & {J _l}\left( {{w_+ }\left( \xi  \right) - {w_+ }\left( \eta  \right)} \right)\left( {{w_+ }\left( \xi  \right){\phi^q}\left( \xi  \right) - {w_+ }\left( \eta  \right){\phi^q}\left( \eta  \right)} \right) \\
\ge   &  {\left| {{w_+ }\left( \xi  \right) - {w_+ }\left( \eta  \right)} \right|^l}\frac{{{\phi^q}\left( \xi  \right) + {\phi^q}\left( \eta  \right)}}{2} - {\left| {{w_+ }\left( \xi  \right) - {w_+ }\left( \eta  \right)} \right|^{l - 1}}\frac{{{w_+ }\left( \xi  \right) + {w_+ }\left( \eta  \right)}}{2}\left| {{\phi^q}\left( \xi  \right) - {\phi^q}\left( \eta  \right)} \right|.
\end{align*}
Since
\begin{align*}
   &\left| {{\phi^q}\left( \xi  \right) - {\phi^q}\left( \eta  \right)} \right| \le q\left( {{\phi^{q - 1}}\left( \xi  \right) + {\phi^{q - 1}}\left( \eta  \right)} \right)\left| {\phi\left( \xi  \right) - \phi\left( \eta  \right)} \right| \\
  \le &  c\left( q \right){\left( {{\phi^q}\left( \xi  \right) +{\phi^q}\left( \eta  \right)} \right)^{\frac{{q - 1}}{q}}}\left| {\phi \left( \xi  \right) - \phi\left( \eta  \right)} \right|
\end{align*}
from Lemma \ref{Le26}, we use Young's inequality, $0\le \phi\le 1$ and ${\frac{{q - 1}}{q}}>0$ to deduce that
\begin{align*}
   & {\left| {{w_+ }\left( \xi  \right) - {w_+ }\left( \eta  \right)} \right|^{l - 1}}\frac{{{w_+ }\left( \xi  \right) + {w_+ }\left( \eta  \right)}}{2}\left| {{\phi^q}\left( \xi  \right) - {\phi^q}\left( \eta  \right)} \right| \\
   \le&  c\left( q \right){\left| {{w_+ }\left( \xi  \right) - {w_+ }\left( \eta  \right)} \right|^{l - 1}}\left( {{w_+ }\left( \xi  \right) + {w_+ }\left( \eta  \right)} \right){\left( {{\phi^q}\left( \xi  \right) + {\phi^q}\left( \eta  \right)} \right)^{\frac{{q - 1}}{q}}}\left| {\phi\left( \xi  \right) - \phi\left( \eta  \right)} \right|\\
   =& c\left( q \right){\left| {{w_+ }\left( \xi  \right) - {w_+ }\left( \eta  \right)} \right|^{l - 1}}\left( {{w_+ }\left( \xi  \right) + {w_+ }\left( \eta  \right)} \right){\left( {{\phi^q}\left( \xi  \right) + {\phi^q}\left( \eta  \right)} \right)^{\frac{{l - 1}}{l} + \frac{{q - l}}{{ql}}}}\left| {\phi\left( \xi  \right) - \phi\left( \eta  \right)} \right|\\
    \le& \varepsilon {\left| {{w_+ }\left( \xi  \right) - {w_+ }\left( \eta  \right)} \right|^l}\left( {{\phi^q}\left( \xi  \right) + {\phi^q}\left( \eta  \right)} \right) + c\left( {\varepsilon ,q} \right){\left( {{\phi^q}\left( \xi  \right) + {\phi^q}\left( \eta  \right)} \right)^{\frac{{q - l}}{q}}}{\left| {\phi\left( \xi  \right) -\phi\left( \eta  \right)} \right|^l}{\left( {{w_+ }\left( \xi  \right) + {w_+ }\left( \eta  \right)} \right)^l}\\
     \le& \varepsilon {\left| {{w_+ }\left( \xi  \right) - {w_+ }\left( \eta  \right)} \right|^l}\left( {{\phi^q}\left( \xi  \right) + {\phi^q}\left( \eta  \right)} \right) + c\left( {\varepsilon ,q} \right){\left| {\phi\left( \xi  \right) -\phi\left( \eta  \right)} \right|^l}{\left( {{w_+ }\left( \xi  \right) + {w_+ }\left( \eta  \right)} \right)^l}.
\end{align*}
Then, by choosing $\varepsilon$ small enough, we have
\begin{align*}
   & {J _l}\left( {{w_+ }\left( \xi  \right) - {w_+ }\left( \eta  \right)} \right)\left( {{w_+ }\left( \xi  \right){\phi^q}\left( \xi  \right) - {w_+ }\left( \eta  \right){\phi^q}\left( \eta  \right)} \right) \\
  \ge &  {\left| {{w_+ }\left( \xi  \right) - {w_+ }\left( \eta  \right)} \right|^l}\frac{{{\phi^q}\left( \xi  \right) + {\phi^q}\left( \eta  \right)}}{4} - c{\left| {\phi( \xi) -\phi\left( \eta  \right)} \right|^l}{\left( {{w_+ }\left( \xi  \right) + {w_+ }\left( \eta  \right)} \right)^l}.
\end{align*}
Thus, we get
\begin{align}\label{eq412}
   {J_1} \ge&\int_{{B_r}} {\int_{{B_r}} {\Bigg[ {\frac{{{{\left| {{w_ + }\left( \xi  \right) - {w_ + }\left( \eta  \right)} \right|}^p}(\phi^q(\xi) + \phi^q( \eta))/4 - c{{\left| {\phi( \xi) - \phi( \eta)} \right|}^p}{{\left( {{w_ + }\left( \xi  \right) + {w_ + }\left( \eta  \right)} \right)}^p}}}{{\| {{\eta ^{ - 1}} \circ \xi } \|_{{\mathbb{H}^n}}^{Q + sp}}}} } } \nonumber \\
   &    + a\left( {\xi ,\eta } \right)\frac{{{{\left| {{w_ + }\left( \xi  \right) - {w_ + }\left( \eta  \right)} \right|}^q}(\phi^q(\xi) +\phi^q(\eta))/4 - c{{\left| {\phi( \xi) -\phi( \eta)} \right|}^q}{{\left( {{w_ + }\left( \xi  \right) + {w_ + }\left( \eta  \right)} \right)}^q}}}{{\| {{\eta ^{ - 1}} \circ \xi } \|_{{\mathbb{H}^n}}^{Q + tq}}}\Bigg]\,d\xi d\eta \nonumber \\
    \ge&  {\int_{{B_r}} {H\left( {\xi ,\eta ,\left| {{w_+ }\left( \xi  \right) - {w_+ }\left( \eta  \right)} \right|} \right)\left( {\phi^q\left( \xi  \right) + {\phi^q}\left( \eta  \right)} \right)\frac{{d\xi d\eta }}{{\| {{\eta ^{ - 1}} \circ \xi } \|_{{\mathbb{H}^n}}^Q}}} } \nonumber\\
    &  - c\int_{{B_r}} {\int_{{B_r}} {H\left( {\xi ,\eta ,\left| {\phi( \xi) -\phi( \eta)} \right|\left( {{w_+ }\left( \xi  \right) + {w_+ }\left( \eta  \right)} \right)} \right)\frac{{d\xi d\eta }}{{\| {{\eta ^{ - 1}} \circ \xi } \|_{{\mathbb{H}^n}}^Q}}} } .
\end{align}

Now we estimate $J_2$. Note that
\begin{equation}\label{eq413}
{J _l}\left( {u\left( \xi  \right) - u\left( \eta  \right)} \right){w_+ }\left( \xi  \right) \ge  - w_+ ^{l - 1}\left( \eta  \right){w_+ }\left( \xi  \right).
\end{equation}
In fact, when $u\left( \xi  \right) \ge u\left( \eta  \right)$, it easy to see that the inequality \eqref{eq413} holds. When $u\left( \xi  \right) < u\left( \eta  \right)$ and $u\left( \xi  \right)\le k$, ${w_+ }\left( \xi  \right)=0$, the inequality \eqref{eq413} also holds. When $k<u\left( \xi  \right) < u\left( \eta  \right)$,
\begin{align*}
  {J _l}\left( {u\left( \xi  \right) - u\left( \eta  \right)} \right){w_+ }\left( \xi  \right)  
 &=  - {\left| {u\left( \xi  \right) - u\left( \eta  \right)} \right|^{l - 1}}{w_+ }\left( \xi  \right) \\
 &=  - {\left| {{w_+ }\left( \xi  \right) - {w_+ }\left( \eta  \right)} \right|^{l - 1}}{w_+ }\left( \xi  \right)\\
 & \ge  - w_+ ^{l - 1}\left( \eta  \right){w_+ }\left( \xi  \right).
\end{align*}
Thus, we apply \eqref{eq413} and \eqref{eq48} to get
\begin{align}\label{eq414}
  J_2 =& 2{\int _{{\mathbb{H}^n}\backslash {B_r}}}\int_{{B_r}} {\left[\frac{{{J _p}\left( {u\left( \xi  \right) - u\left( \eta  \right)} \right){w_ + }\left( \xi  \right){\phi^q}\left( \xi  \right)}}{{\| {{\eta ^{ - 1}} \circ \xi } \|_{{\mathbb{H}^n}}^{Q + sp}}} + a\left( {\xi ,\eta } \right)\frac{{{J _q}\left( {u\left( \xi  \right) - u\left( \eta  \right)} \right){w_ + }\left( \xi  \right){\phi^q}\left( \xi  \right)}}{{\| {{\eta ^{ - 1}} \circ \xi } \|_{{\mathbb{H}^n}}^{Q + tq}}}\right]\,d\xi d\eta } \nonumber\\
   \ge& - c{\int _{{\mathbb{H}^n}\backslash {B_r}}}\int_{{B_r}} {\left[ {\frac{{w_+ ^{p - 1}\left( \eta  \right){w_+ }\left( \xi  \right){\phi^q}\left( \xi  \right)}}{{\| {{\eta ^{ - 1}} \circ \xi } \|_{{\mathbb{H}^n}}^{sp}}} + a\left( {\xi ,\eta } \right)\frac{{w_+ ^{q - 1}\left( \eta  \right){w_+ }\left( \xi  \right){\phi^q}\left( \xi  \right)}}{{\| {{\eta ^{ - 1}} \circ \xi } \|_{{\mathbb{H}^n}}^{tq}}}} \right]\frac{{d\xi d\eta }}{{\| {{\eta ^{ - 1}} \circ \xi } \|_{{\mathbb{H}^n}}^Q}}} \nonumber \\
    = & - c{\int _{{\mathbb{H}^n}\backslash {B_r}}}\int_{{B_r}} {h\left( {\xi ,\eta ,{w_+ }\left( \eta  \right)} \right){w_+ }\left( \xi  \right){\phi^q}\left( \xi  \right)\frac{d\xi d\eta }{{\| {{\eta ^{ - 1}} \circ \xi } \|_{{\mathbb{H}^n}}^Q}}}  \nonumber\\
     \ge&  - c\left( {\mathop {\sup }\limits_{\xi  \in {\rm{supp}}\;\phi} \int_{{\mathbb{H}^n}\backslash {B_r}} {h\left( {\xi ,\eta ,{w_+ }\left( \eta  \right)} \right)\frac{d\eta }{{\| {{\eta ^{ - 1}} \circ \xi } \|_{{\mathbb{H}^n}}^Q}}} } \right)\int_{{B_r}} {{w_\pm }\left( \xi  \right){\phi^q}\left( \xi  \right)\,d\xi }.
\end{align}
Combining \eqref{eq411}, \eqref{eq412} with \eqref{eq414}, we get \eqref{eq49}.
\end{proof}
%\begin{theorem}\label{Th11}
 % Let $a:{\mathbb{H}^n}\times {\mathbb{H}^n} \to {\mathbb{R}}$ be symmetric and satisfy \eqref{eqy19} and \eqref{eqy110}. If
%\begin{equation}\label{eqy112}
%\left\{ \begin{array}{l}
%p \le q \le \frac{{Qp}}{{Q - sp}}\;\;\;\;\;\;\;\;\;\;\;\;\;\;\;when\;\;sp < Q,\\
%p \le q < \infty \;\;\;\;\;\;\;\;\;\;\;\;\;\;\;\;\;\;\;\;\;\;when\;\;sp \ge Q,
%\end{array} \right.\;
%\end{equation}
%then every weak solution $u\in {\mathcal{A}}(\Omega ) \cap L_{sp}^{q - 1}\left({{\mathbb{H}^n}}\right)$ to \eqref{eqy11} is locally bounded in $\Omega$.
%\end{theorem}
The following standard iteration lemma can be found in \cite[Lemma 7.1]{G03}.

\begin{lemma}\label{Le27}
Let $\left\{ {{y_i}} \right\}_{i = 0}^\infty $ be a sequence of nonnegative numbers satisfying
\[{y_{i + 1}} \le {b_1}b_2^iy_i^{1 + \beta },\;i = 0,1,2, \cdots \]
for some constants $b_1,\;\beta>0$ and $b_2>1$. If
\[{y_0} \le b_1^{ - \frac{1}{\beta }}b_2^{ - \frac{1}{{{\beta ^2}}}},\]
then ${y_i} \to 0$ as $i \to \infty$.
\end{lemma}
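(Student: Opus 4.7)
The plan is to prove by induction the explicit geometric decay bound
\[
y_i \le y_0\, b_2^{-i/\beta} \qquad \text{for all } i\ge 0,
\]
from which the conclusion $y_i\to 0$ is immediate since $b_2>1$ and $\beta>0$. The exponent $-1/\beta$ in the ansatz is not arbitrary: it is the unique rate for which the factor $b_2^i$ in the recursion is absorbed after raising $y_i$ to the power $1+\beta$.

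The base case $i=0$ is trivial. For the inductive step, assuming $y_i\le y_0 b_2^{-i/\beta}$, I would plug into the recursion and compute
\[
y_{i+1}\le b_1 b_2^{i} y_i^{1+\beta}
\le b_1 b_2^{i}\, y_0^{1+\beta}\, b_2^{-i(1+\beta)/\beta}
= b_1 y_0^{\beta}\, y_0\, b_2^{-i/\beta}.
\]
To advance the induction, it suffices to show $b_1 y_0^{\beta} \le b_2^{-1/\beta}$, i.e.\ $y_0 \le b_1^{-1/\beta} b_2^{-1/\beta^2}$, which is exactly the hypothesis on $y_0$. This closes the induction and yields $y_{i+1}\le y_0 b_2^{-(i+1)/\beta}$.

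There is no real obstacle here; the only delicate choice is the decay exponent $-1/\beta$, which is forced by balancing the powers of $b_2$ in the induction. Any slower geometric rate would fail in the inductive step, while any faster rate cannot be sustained from the smallness condition on $y_0$. Once the bound $y_i \le y_0 b_2^{-i/\beta}$ is established, letting $i\to\infty$ finishes the proof.
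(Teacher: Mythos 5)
Your proof is correct: the inductive ansatz $y_i\le y_0\,b_2^{-i/\beta}$ propagates precisely because the hypothesis $y_0\le b_1^{-1/\beta}b_2^{-1/\beta^2}$ is equivalent to $b_1y_0^{\beta}\le b_2^{-1/\beta}$, and the exponent bookkeeping $b_2^{i}\,b_2^{-i(1+\beta)/\beta}=b_2^{-i/\beta}$ is right. The paper does not prove this lemma itself but cites Giusti's book, where the standard argument is essentially this same induction.
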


We end this section by providing the proof of boundedness. Lemmas \ref{Le24} and \ref{Le42} play the vital roles in the process.

\medskip

\noindent \textbf{Proof of Theorem \ref{Th11}.} For convenience, we denote
\[{H_0}\left( \tau  \right) = {\tau ^p} + {\| a \|_{{L^\infty }}}{\tau ^q},\;\;\;\;\tau  \ge 0.\]
Let ${B_{r}} \equiv {B_{r}}\left( {{\xi _0}} \right) \subset  \subset \Omega $ be a fixed ball with $r \le 1$. For $i=0,1,2,\cdots$ and $k_0 >0$, we write
\[{r_i}: = \frac{r}{2}\left( {1 + {2^{ - i}}} \right),\quad {\sigma _i}: = \;\frac{{{r_{i - 1}} + {r_i}}}{2}, \quad {k_i}: = 2{k_0}\left( {1 - {2^{ - i - 1}}} \right)\]
and 
\[{y_i}: = \int_{{A^ + }\left( {{k_i},{r_i}} \right)} {{H_0}\left( {\left( {u\left( \xi  \right) - {k_i}} \right){_ + }} \right)\;d\xi } .\]
In addition, we denote
\[{A^ + }\left( {{k_i},{r_i}} \right): = \left\{ {\xi  \in {B_{{r_i}}}:u\left( \xi  \right) \ge {k_i}} \right\}.\]
Then
\begin{equation}\label{eqb41}
 {A^ + }\left( {{k_i},{r_i}} \right) \subset {A^ + }\left( {{k_{i - 1}},{r_i}} \right) \subset {A^ + }\left( {{k_{i - 1}},{r_{i - 1}}} \right).
\end{equation}
Moreover, for $\xi \in {A^ + }\left( {{k_i},{r_i}} \right)$, we have
\[{\left( {u\left( \xi  \right) - k_{i-1}} \right)_ + } = u\left( \xi  \right) - k_{i-1} \ge k_i - k_{i-1}=2^{-i}k_0\]
and
\[{\left( {u\left( \xi  \right) - k_{i-1}} \right)_ + } = u\left( \xi  \right) - k_{i-1} \ge u\left( \xi  \right) - k_i = {\left( {u\left( \xi  \right) - k_i} \right)_ + }.\]
Thus, it deduces
\begin{equation}\label{eq416}
\left| {{A^ + }\left( {k_i,r_i } \right)} \right|   \le \int_{{A^ + }\left( {k_i,r_i } \right)} {\frac{{\left( {u\left( \xi  \right) - k_{i-1}} \right)_ + ^p}}{{{{\left( {k_i - k_{i-1}} \right)}^p}}}d\xi }  \le {k_0^{-p}}2^{ip}\int_{{A^ + }\left( {k_i,r_i } \right)} {{H_0}\left( {{{\left( {u\left( \xi  \right) - k_{i-1}} \right)}_ + }} \right)d\xi }\le{k_0^{-p}}2^{ip}y_{i-1}
\end{equation}
and
\begin{align}\label{eq417}
  \int_{{B_{r_{i-1}} }} {{{\left( {u\left( \xi  \right) - k_i} \right)}_ + }\;d\xi } &\le \int_{{B_{r_{i-1}} }} {{{\left( {u\left( \xi  \right) - k_{i-1}} \right)}_ + }\;d\xi }  \nonumber\\
  & \le \int_{{B_{r_{i-1}} }} {{{\left( {u\left( \xi  \right) - k_{i-1}} \right)}_ + }{{\left( {\frac{{{{\left( {u\left( \xi  \right) - k_{i-1}} \right)}_ + }}}{{k_i - k_{i-1}}}} \right)}^{p - 1}}\;d\xi }  \nonumber\\
   &  \le {{{k_0^{1-p}}{2^{i(p - 1)}}}}\int_{{B_{r_{i-1}} }} {{H_0}\left( {{{\left( {u\left( \xi  \right) - k_{i-1}} \right)}_ + }} \right)\;d\xi }  \nonumber\\
   &={{{k_0^{1-p}}{2^{i(p - 1)}}}}y_{i-1}.
\end{align}
Then we choose a cut-off function $\phi \in C_0^\infty \left( {{B_{\frac{{\sigma _i + {r_{i - 1}}}}{2}}}} \right)$ satisfying $0 \le \phi\le 1,\;\phi\equiv 1 $ in $B_{\sigma _i}$ and $\left| {{\nabla _H}\phi } \right| \le \frac{c}{{r_{i-1}-\sigma _i  }}=\frac{c}{r}2^i$. Denoting the tail by
\[T\left( {v;r_{i-1}} \right): = \int_{{\mathbb{H}^n}\backslash {B_{r_{i-1}}}} {\left( {\frac{{{{\left| {v \left( \xi  \right)} \right|}^{p - 1}}}}{{\| {\xi _0^{ - 1} \circ \xi } \|_{{\mathbb{H}^n}}^{Q + sp}}} + {{\| a \|}_{{L^\infty }}}\frac{{{{\left| {v \left( \xi  \right)} \right|}^{q - 1}}}}{{\| {\xi _0^{ - 1} \circ \xi } \|_{{\mathbb{H}^n}}^{Q + tq}}}} \right)\;d\xi .} \]

We use Lemma \ref{Le24} with $f \equiv {\left( {u - k} \right)_ + }$ and \eqref{eq416} to get
\begin{align}\label{eq415}
  {y_i} &= \int_{{A^ + }\left( {{k_i},{r_i}} \right)} {{H_0}\left( {{{\left( {u\left( \xi  \right) - {k_i}} \right)}_ + }} \right)\;d\xi } \nonumber \\
  & \le \int_{{B_{{r_i}}}} {{H_0}\left( {{{\left( {u\left( \xi  \right) - {k_i}} \right)}_ + }} \right)\;d\xi } \nonumber \\
  &= c{r_i^Q}\fint_{{B_{{r_i}}}} {{H_0}\left( {{{\left( {u\left( \xi  \right) - {k_i}} \right)}_ + }} \right)\;d\xi }\nonumber\\
 & \le cr_i^{Q + sp } \fint_{{B_{{{r _i}}}}} {\left( {{{\left| {\frac{{{{\left( {u\left( \xi  \right) - {k_i}} \right)}_ + }}}{{{r_i^s}}}} \right|}^p} + {a_0}{{\left| {\frac{{{{\left( {u\left( \xi  \right) - {k_i}} \right)}_ + }}}{{{r_i^t}}}} \right|}^q}} \right)\;d\xi }  \nonumber\\
  & \le c{a_0}r_i^{Q + sp - tq}D_1^{\frac{q}{p}}({\sigma _i},{r_i}){\left( {\fint_{{B_{{\sigma _i}}}} {\int_{{B_{{{\sigma _i}}}}} {\frac{{{{\left| {{{\left( {u\left( \xi  \right) - {k_i}} \right)}_ + } - {{\left( {u\left( \eta  \right) - {k_i}} \right)}_ + }} \right|}^p}}}{{\| {{\eta ^{ - 1}} \circ \xi } \|_{{\mathbb{H}^n}}^{Q + sp}}}\;d\xi } d\eta } } \right)^{\frac{q}{p}}}\nonumber\\
   & \quad+ cr_i^{Q - sp}{D_1}({\sigma _i},r_i){\left( {{A^ + }\left( {{k_i},{r_i}} \right)} \right)^{\frac{{sp}}{Q}}}\fint_{{B_{{\sigma _i}}}} {\int_{{B_{{\sigma _i}}}} {\frac{{{{\left| {{{\left( {u\left( \xi  \right) - {k_i}} \right)}_ + } - {{\left( {u\left( \eta  \right) - {k_i}} \right)}_ + }} \right|}^p}}}{{\| {{\eta ^{ - 1}} \circ \xi } \|_{{\mathbb{H}^n}}^{Q + sp}}}\;d\xi } d\eta } \nonumber\\
   &\quad + cr_i^{Q + sp}\left[ {{{\left( {\frac{{{A^ + }\left( {{k_i},{r_i}} \right)}}{{\left| {{B_{{r_i}}}} \right|}}} \right)}^{p - 1}} + D_2^{\frac{q}{p}}({\sigma _i},r_i)\left( {{{\left( {\frac{{{A^ + }\left( {{k_i},{r_i}} \right)}}{{\left| {{B_{{r_i}}}} \right|}}} \right)}^{\frac{{sp}}{Q}}} + {{\left( {\frac{{{A^ + }\left( {{k_i},{\sigma _i}} \right)}}{{\left| {{B_{{{\sigma _i}}}}} \right|}}} \right)}^{\frac{{q - p}}{p}}}} \right)} \right] \nonumber\\
   & \qquad \cdot \fint_{{B_{{{\sigma _i}}}}} {\left( {{{\left| {\frac{{{{\left( {u\left( \xi  \right) - {k_i}} \right)}_ + }}}{{{r_i^s}}}} \right|}^p} + {a_0}{{\left| {\frac{{{{\left( {u\left( \xi  \right) - {k_i}} \right)}_ + }}}{{{r_i^t}}}} \right|}^q}} \right)\;d\xi }  \nonumber\\
  & \le c{a_0}r_i^{Q + sp - tq}D_1^{\frac{q}{p}}({\sigma _i},{r_i}){\left( {\fint_{{B_{{{\sigma _i}}}}} {\int_{{B_{{{\sigma _i}}}}} {\frac{{{{\left| {{{\left( {u\left( \xi  \right) - {k_i}} \right)}_ + } - {{\left( {u\left( \eta  \right) - {k_i}} \right)}_ + }} \right|}^p}}}{{\|  {{\eta ^{ - 1}} \circ \xi } \|_{{\mathbb{H}^n}}^{Q + sp}}}\;d\xi } d\eta } } \right)^{\frac{q}{p}}}\nonumber\\
   &\quad + ck_0^{ - \frac{{s{p^2}}}{Q}}r_i^{Q - sp}{2^{i\frac{{s{p^2}}}{Q}}}{D_1}({\sigma _i},r_i)y_{i - 1}^{\frac{{sp}}{Q}}\fint_{{B_{{\sigma _i}}}} {\int_{{B_{{\sigma _i}}}} {\frac{{{{\left| {{{\left( {u\left( \xi  \right) - {k_i}} \right)}_ + } - {{\left( {u\left( \eta  \right) - {k_i}} \right)}_ + }} \right|}^p}}}{{\| {{\eta ^{ - 1}} \circ \xi } \|_{{\mathbb{H}^n}}^{Q + sp}}}\;d\xi } d\eta } \nonumber\\
  & \quad + cr_i^{Q + sp-tq}\left[ {{{\left( {\frac{{k_0^{ - p}{2^{ip}}{y_{i - 1}}}}{{\left| {{B_{{r_i}}}} \right|}}} \right)}^{p - 1}} + D_2^{\frac{q}{p}}({\sigma _i},r_i)\left( {{{\left( {\frac{{k_0^{ - p}{2^{ip}}{y_{i - 1}}}}{{\left| {{B_{{r_i}}}} \right|}}} \right)}^{\frac{{sp}}{Q}}} + {{\left( {\frac{{k_0^{ - p}{2^{ip}}{y_{i - 1}}}}{{\left| {{B_{{\sigma _i}}}} \right|}}} \right)}^{\frac{{q - p}}{p}}}} \right)} \right] \nonumber\\
  &\qquad \cdot \fint_{{B_{{\sigma _i}}}} {{H_0}\left( {{{\left( {u\left( \xi  \right) - {k_i}} \right)}_ + }} \right)\;d\xi } .
\end{align}
Since we have that from Lemma \ref{Le42} and \eqref{eq417}
\begin{align*}
   & \fint_{{B_{{\sigma _i}}}} {\int_{{B_{{\sigma _i}}}} {\frac{{{{\left| {{{\left( {u\left( \xi  \right) - {k_i}} \right)}_ + } - {{\left( {u\left( \eta  \right) - {k_i}} \right)}_ + }} \right|}^p}}}{{\| {{\eta ^{ - 1}} \circ \xi } \|_{{\mathbb{H}^n}}^{Q + sp}}}\;d\xi } d\eta }  \\
 \le&  \fint_{{B_{{\sigma _i}}}} {\int_{{B_{{\sigma _i}}}} {H\left( {\xi ,\eta ,\left| {{{\left( {u\left( \xi  \right) - {k_i}} \right)}_ + } - {{\left( {u\left( \eta  \right) - {k_i}} \right)}_ + }} \right|} \right)\frac{{d\xi d\eta }}{{\| {{\eta ^{ - 1}} \circ \xi } \|_{{\mathbb{H}^n}}^Q}}} }  \\
 \le &c{r^{ - p}}{2^{ip}}\fint_{{B_{{r_{i - 1}}}}} {\left( {u\left( \xi  \right) - {k_{i }}} \right)_ + ^p\int_{{B_{{r_{i - 1}}}}} {\frac{{d\xi d\eta }}{{\| {{\eta ^{ - 1}} \circ \xi } \|_{{\mathbb{H}^n}}^{Q + (s - 1)p}}}} } \\
 & + c{\| a \|_{{L^\infty }}}{r^{ - q}}{2^{iq}}\fint_{{B_{{r_{i - 1}}}}} {\left( {u\left( \xi  \right) - {k_{i }}} \right)_ + ^q\int_{{B_{{r_{i - 1}}}}} {\frac{{d\xi d\eta }}{{\| {{\eta ^{ - 1}} \circ \xi } \|_{{\mathbb{H}^n}}^{Q + (t - 1)q}}}} } \\
 & + c\left( {\mathop {\sup }\limits_{\xi  \in {\rm{supp}}\;\phi } \int_{{\mathbb{H}^n}\backslash {B_{{r_{i - 1}}}}} {\left( {\frac{{\left( {u\left( \eta  \right) - k_{i }} \right)_ + ^{p - 1}}}{{\| {{\eta ^{ - 1}} \circ \xi } \|_{{\mathbb{H}^n}}^{Q + sp}}} + {{\| a \|}_{{L^\infty }}}\frac{{\left( {u\left( \eta  \right) - k_{i }} \right)_ + ^{q - 1}}}{{\| {{\eta ^{ - 1}} \circ \xi } \|_{{\mathbb{H}^n}}^{Q + tq}}}} \right)\;d\eta } } \right)\fint_{{B_{{r_{i - 1}}}}} {{{\left( {u\left( \xi  \right) - k_{i }} \right)}_ + }\;d\xi } \\
  \le& c{r^{ - p}}{2^{ip}}r_{i - 1}^{\left( {1 - s} \right)p}\fint_{{B_{{r_{i - 1}}}}} {\left( {u\left( \xi  \right) - {k_{i }}} \right)_ + ^p\;d\xi}  + c{\| a \|_{{L^\infty }}}{r^{ - q}}{2^{iq}}r_{i - 1}^{\left( {1 - t} \right)q}\fint_{{B_{{r_{i - 1}}}}} {\left( {u\left( \xi  \right) - {k_{i }}} \right)_ + ^q\;d\xi}+ c{\left( {\frac{{{r_{i - 1}} + {\sigma _i}}}{{{r_{i - 1}} - {\sigma _i}}}} \right)^{Q + tq}} \\
  &\cdot\left( {\int_{{\mathbb{H}^n}\backslash {B_{{r_{i - 1}}}}} {\left( {\frac{{\left( {u\left( \eta  \right) - k_{i }} \right)_ + ^{p - 1}}}{{\| {{\eta ^{ - 1}} \circ {\xi _0}} \|_{{\mathbb{H}^n}}^{Q + sp}}} + {{\| a \|}_{{L^\infty }}}\frac{{\left( {u\left( \eta  \right) - k_{i }} \right)_ + ^{q - 1}}}{{\| {{\eta ^{ - 1}} \circ {\xi _0}} \|_{{\mathbb{H}^n}}^{Q + tq}}}} \right)\;d\eta } } \right)\fint_{{B_{{r_{i - 1}}}}} {{{\left( {u\left( \xi  \right) - k_{i }} \right)}_ + }\;d\xi } \\
  \le& c{r^{ - q}}{2^{iq}}r_{i - 1}^{\left( {1 - t} \right)p}\fint_{{B_{{r_{i - 1}}}}} {{H_0}\left( {{{\left( {u\left( \xi  \right) - {k_i}} \right)}_ + }} \right)\;d\xi }  + c{2^{i\left( {Q + tq} \right)}}T\left( {{{\left( {u - {k_i}} \right)}_ + };{r_{i - 1}}} \right)\fint_{{B_{{r_{i - 1}}}}} {{{\left( {u\left( \xi  \right) - k_i} \right)}_ + }\;d\xi }\\
 \le &c{2^{i\left( {Q + q + p-1 } \right)}}{y_{i - 1}},
\end{align*}
where we used the fact that
\[T\left( {{{\left( {u - {k_i}} \right)}_ + };{r_{i - 1}}} \right) \le T\left( {u;\frac{r}{2}} \right) < \infty ,\]
and
\[\frac{{\| {{\eta ^{ - 1}} \circ {\xi _0}} \|_{{\mathbb{H}^n}}^{}}}{{\| {{\eta ^{ - 1}} \circ \xi } \|_{{\mathbb{H}^n}}^{}}} \le 1 + \frac{{\| {\xi _0^{ - 1} \circ \xi } \|_{{\mathbb{H}^n}}^{}}}{{\| {{\eta ^{ - 1}} \circ \xi } \|_{{\mathbb{H}^n}}^{}}} \le 1 + \frac{{{r_{i - 1}} + {\sigma _i}}}{{{r_{i - 1}} - {\sigma _i}}} \le 2\frac{{{r_{i - 1}} + {\sigma _i}}}{{{r_{i - 1}} - {\sigma _i}}} \le c{2^i}\]
for ${\xi  \in {\rm{supp}}\;\phi }$ and $\eta \in {{\mathbb{H}^n}\backslash {B_{r_{i - 1}} }}$,
and note that
\[{D_1}({\sigma _i},{r_i}) \le c{2^{i\left( {Q + p} \right)}}, \quad {D_2}({\sigma _i},r_i) \le c{2^{i\left( {Q + p} \right)}},\]
it follows from \eqref{eq415} that
\begin{align}\label{eq418}
{y_i} \le & c{a_0}{2^{i\left[ {\frac{{q\left( {Q + p} \right)}}{p} + \frac{{q\left( {Q + q + p-1} \right)}}{p}} \right]}}y_{i - 1}^{\frac{q}{p}} + c{2^{i\left( {\frac{{{p^2}}}{Q} + Q + p + \left( {Q + q + p-1} \right)} \right)}}y_{i - 1}^{\frac{{sp}}{Q} + 1}\nonumber\\
& + c{2^{ip(p - 1)}}y_{i - 1}^p + c{2^{i\left[ {\frac{{q\left( {Q + p} \right)}}{p} + \frac{p^2}{Q}} \right]}}y_{i - 1}^{\frac{{sp}}{Q} + 1} + c{2^{i\left[ {\frac{{q\left( {Q + p} \right)}}{p} +q- p} \right]}}y_{i - 1}^{\frac{q}{p}}\nonumber\\
  \le& c{2^{i\left[ {\frac{{q\left( {Q + p} \right)}}{p} + \frac{{q\left( {Q + q + p} \right)}}{p}} \right]}}y_{i - 1}^{\frac{q}{p}} + c{2^{i\left( {\frac{{{p^2}}}{Q} + Q + p + \frac{{q\left( {Q + q + p} \right)}}{p}} \right)}}y_{i - 1}^{\frac{{sp}}{Q} + 1} + c{2^{ip(p - 1)}}y_{i - 1}^p.
\end{align}

Since ${H_0}\left( u \right) \in {L^1}\left( \Omega  \right)$ from the assumption \eqref{eqy112}, we get that
\[{y_0} = \int_{{A^ + }\left( {{k_0},r} \right)} {{H_0}\left( {\left( {u\left( \xi  \right) - {k_0}} \right){_ + }} \right)\;d\xi }  \to 0\quad \text{as } k_0 \to \infty .\]
First, we consider $k_0 >1$ so large that
\[{y_i} \le {y_{i - 1}} \le  \cdots  \le {y_0} \le 1,\;\;i = 1,2, \cdots .\]
Then, we have from \eqref{eq418} that
$${y_i} \le c{2^{\theta i}}y_{i - 1}^\beta ,$$
 where
\[\theta  = 2\left(\frac{{\left( {Q + p + q} \right)q}}{p} + {p^2}\right),\;\;\;\;\;\beta  = \min \left\{ {\frac{{q }}{p}-1,\frac{{sp}}{Q},p - 1} \right\}.\]
Finally, we can choose $k_0$ so large that
\[{y_0} \le {{\tilde c}^{ - \frac{1}{\beta }}}{2^{ - \frac{\theta }{{{\beta ^2}}}}}\]
holds. Then Lemma \ref{Le27} implies
\[{y_\infty } = \int_{{A^ + }\left( {2{k_0},\frac{r}{2}} \right)} {{H_0}\left( {\left( {u\left( \xi  \right) - 2{k_0}} \right){_ + }} \right)\;d\xi }  = 0,\]
which means that $u \le 2{k_0}$ a.e. in ${B_{\frac{r}{2}}}$.

Applying the same argument to $-u$, we consequently obtain $u \in {L^\infty }( {{B_{\frac{r}{2}}}} )$.

\section{H\"{o}lder continuity}
\label{Section 5}

We are going to demonstrate the H\"{o}lder regularity of weak solutions to Eq. \eqref{eqy11} in the last section. First, the second important tool, logarithmic estimate, is established as follows. Throughout this part, we fix any subdomain $\Omega ' \subset  \subset \Omega $.

\begin{lemma}[Logarithmic inequality]\label{Le51}
Let $s,t,p,q$ satisfy \eqref{eqy18} and $a$%:{\mathbb{H}^n}\times {\mathbb{H}^n} \to {\mathbb{R}}$ be symmetric and
fulfill \eqref{eqy19}, \eqref{eqy110} with \eqref{eqy113}. Let also $u\in \mathcal{A}(\Omega)$ be a weak solution of \eqref{eqy11} such that $u\in L^{\infty}(\Omega')$ and $u \ge 0$ in ${B_R}: = {B_R}\left( {{\xi _0}} \right) \subset \Omega ' \subset  \subset \Omega $ with $R\le 1$. Then there is a constant $c>1$ such that, for any $0<r\le \frac{R}{2}$ and $d>0$,
\begin{align*}
   & \int_{{B_r}} {\int_{{B_r}} {\left| {\log \frac{{u\left( \xi  \right) + d}}{{u\left( \eta  \right) + d}}} \right|} } \frac{{d\xi d\eta }}{{\| {{\eta ^{ - 1}} \circ \xi } \|_{{\mathbb{H}^n}}^Q}} \\
\le   &  c{K^2}\left( {{r^Q} + \frac{{{r^{Q + sp}}}}{{{d^{p - 1}}}}\int_{{\mathbb{H}^n}\backslash {B_R}} {\frac{{u_ - ^{p - 1}\left( \eta  \right) + u_ - ^{q - 1}\left( \eta  \right)}}{{\| {{\eta ^{ - 1}} \circ \xi_0 } \|_{{\mathbb{H}^n}}^{Q + sp}}}\,d\eta }  + \frac{{{r^{Q + tq}}}}{{{r^{q - 1}}}}\int_{{\mathbb{H}^n}\backslash {B_R}} {\frac{{u_ - ^{q - 1}\left( \eta  \right)}}{{\| {{\eta ^{ - 1}} \circ \xi_0 } \|_{{\mathbb{H}^n}}^{Q + tq}}}\,d\eta } } \right)
\end{align*}
holds true. Here $K:=1+d^{q-p}+\| u \|_{{L^\infty }\left( {\Omega '} \right)}^{q - p}$ and $c$ depends on $n,p,q,s,t$ and $\alpha, {\left[ a \right]_\alpha },\| a \|_{L^\infty }$.%\left( {{\mathbb{H}^n}} \right)}}$.
\end{lemma}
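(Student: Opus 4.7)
The plan is to run the Di Castro--Kuusi--Palatucci logarithmic scheme adapted to the double phase setting, as in the spirit of \cite{DKP14,BOS22}. The starting point is to insert into the weak formulation \eqref{eqy111} the test function
\[
\varphi(\xi):=(u(\xi)+d)^{1-p}\phi^q(\xi),
\]
where $\phi\in C_0^\infty(B_{3r/2})$ is a standard cutoff with $\phi\equiv 1$ on $B_r$, $0\le\phi\le 1$ and $|\nabla_H\phi|\le c/r$. Because $u\ge 0$ and $d>0$ on $B_R$, $\varphi$ is bounded and Lipschitz, so by (the obvious variant of) Lemma~\ref{Le41} it is admissible. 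Splitting $\mathcal{C}_{B_{3r/2}}$ as $(B_R\times B_R)\cup((\mathbb{H}^n\setminus B_R)\times B_{3r/2})$ (plus the symmetric piece), the equation becomes
\[
\mathcal{I}_p^{\mathrm{loc}}+\mathcal{I}_q^{\mathrm{loc}}+2\bigl(\mathcal{I}_p^{\mathrm{tail}}+\mathcal{I}_q^{\mathrm{tail}}\bigr)=0,
\]
and the strategy is to derive a logarithmic lower bound from $\mathcal{I}_p^{\mathrm{loc}}$ and bound every other term from above by the right-hand side in the statement.

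For $\mathcal{I}_p^{\mathrm{loc}}$ I would use the pointwise algebraic inequality (already standard in the nonlocal $p$-Laplacian literature): for $a,b\ge 0$ and $\xi_1\ge\xi_2\ge 0$,
\[
J_p(a-b)\bigl((a+d)^{1-p}\xi_1^q-(b+d)^{1-p}\xi_2^q\bigr)\;\ge\;c_1\,\xi_1^{q}\,\Bigl|\log\tfrac{a+d}{b+d}\Bigr|^{p}-c_2\,(\xi_1^{q-p}+\xi_2^{q-p})\,|\xi_1-\xi_2|^{p}.
\]
After integrating over $B_R\times B_R$ and using $\phi\equiv 1$ on $B_r$, the first term produces the desired logarithmic quantity (to the $p$-th power), while the cutoff-error term is controlled by $|\phi(\xi)-\phi(\eta)|\le c\|\eta^{-1}\circ\xi\|_{\mathbb{H}^n}/r$, whose double integration against $\|\eta^{-1}\circ\xi\|_{\mathbb{H}^n}^{-Q-sp}$ yields a contribution of order $cr^{Q-sp}r^{sp}=cr^Q$.

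The main obstacle is $\mathcal{I}_q^{\mathrm{loc}}$, since the test function is tuned to the $p$-phase rather than to the $q$-phase. My plan here is to factor $(u+d)^{1-p}=(u+d)^{1-q}(u+d)^{q-p}$ and to use the $L^\infty$ bound of $u$ together with $d>0$ to absorb the extra power into the constant $K$. Then I would replace $a(\xi,\eta)$ by $a^-_{B_R}+c[a]_\alpha R^\alpha$ via the H\"older continuity \eqref{eqy110}, and invoke the crucial balance $tq\le sp+\alpha$ to reshape the $q$-integrand into one homogeneous of degree $sp$ in $\|\eta^{-1}\circ\xi\|_{\mathbb{H}^n}$. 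A second pointwise inequality analogous to the one above (now with the monotone decreasing function $\tau\mapsto (\tau+d)^{1-p}$) yields a $q$-log term which I discard together with a cutoff-error term again bounded by $cK r^Q$.

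For the tail integrals I would exploit that, since $u\ge 0$ on $B_R$, $(u(\xi)-u(\eta))_- \le u_-(\eta)$ for $\xi\in\operatorname{supp}\phi$, $\eta\in\mathbb{H}^n\setminus B_R$, giving
\[
J_p(u(\xi)-u(\eta))(u(\xi)+d)^{1-p}\phi^q(\xi)\ge -\bigl(u_-(\eta)+d\bigr)^{p-1}(u(\xi)+d)^{1-p}\phi^q(\xi),
\]
and analogously for $J_q$. Together with the standard geometric comparison $\|\eta^{-1}\circ\xi_0\|_{\mathbb{H}^n}\le c\|\eta^{-1}\circ\xi\|_{\mathbb{H}^n}$ valid on $\operatorname{supp}\phi \times (\mathbb{H}^n\setminus B_R)$, this produces the two tail contributions featuring $u_-^{p-1}/\|\eta^{-1}\circ\xi_0\|^{Q+sp}$ and $u_-^{q-1}/\|\eta^{-1}\circ\xi_0\|^{Q+tq}$, respectively, multiplied by the prefactors $r^{Q+sp}/d^{p-1}$ and $r^{Q+tq}/d^{q-1}$ that appear in the statement. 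Finally, an application of H\"older's inequality over $B_r\times B_r$ with weight $\|\eta^{-1}\circ\xi\|_{\mathbb{H}^n}^{-Q}$ converts the $L^p$-log bound into the claimed $L^1$-log bound, while collecting all the $(1+\|u\|_{L^\infty}^{q-p})$-factors produced along the way into the advertised $K^2$.
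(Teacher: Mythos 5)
Your overall architecture (test in the weak formulation, split into local and tail pieces, extract a log term, control cutoff errors and tails) matches the paper's, but the choice of test function creates a gap that I do not see how to close. You take $\varphi=(u+d)^{1-p}\phi^q$, which is tuned to the $p$-phase only. The paper instead tests with $\varphi=\phi^q/g_{2r}(u+d)$, where $g_{2r}(\tau)=\tau^{p-1}(2r)^{-sp}+a^+_{2r}\,\tau^{q-1}(2r)^{-tq}$ is the full double-phase ``derivative'' normalized at scale $r$. This is not cosmetic. With your choice, the $q$-phase cutoff-error term carries the mismatched homogeneity $(u(\eta)+d)^{q-1}\cdot(u(\eta)+d)^{1-p}=(u(\eta)+d)^{q-p}\le cK$, and integrates to roughly $a^+_{2r}\,K\,r^{Q-tq}$; since the coercive term you keep (the $p$-log term) only has weight $\|\eta^{-1}\circ\xi\|^{-Q-sp}\gtrsim r^{-sp}\|\eta^{-1}\circ\xi\|^{-Q}$, normalizing leaves a deficit $a^+_{2r}K\,r^{Q+sp-tq}$, which blows up relative to $r^Q$ as $r\to0$ whenever $tq>sp$. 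Your proposed repair via \eqref{eqy110} and $tq\le sp+\alpha$ only handles the \emph{oscillating} part of the coefficient, $a(\xi,\eta)\le a^+_{2r}+c[a]_\alpha\|\eta^{-1}\circ\xi\|^{tq-sp}$: the piece $\|\eta^{-1}\circ\xi\|^{tq-sp}$ indeed restores the $sp$-homogeneity, but the constant part $a^+_{2r}$ (which can equal $\|a\|_{L^\infty}$, e.g.\ for $a\equiv\mathrm{const}>0$) is untouched and its cutoff error is irreducible. In the paper's argument this term is cancelled exactly because the same factor $a^+_{2r}\tau^{q-1}(2r)^{-tq}$ sits in the denominator $G_{2r}(\bar u(\eta))$, so both the $p$- and $q$-cutoff errors become scale-invariant and integrate to $cr^Q$ (see \eqref{eq54}); a further payoff is that the ratio $H/H_{2r}$ dominates $|\log(\bar u(\xi)/\bar u(\eta))|$ to the \emph{first} power (see \eqref{eq56}), so no final H\"older/Jensen conversion from an $L^p$-log to an $L^1$-log bound is needed.

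Two smaller points. First, your tail inequality is stated with the wrong sign and the wrong quantity: for an upper bound on the nonlocal contribution you should drop the negative part of $J_p(u(\xi)-u(\eta))$ and bound $(u(\xi)-u(\eta))_+^{p-1}\le c(\bar u(\xi)^{p-1}+u_-^{p-1}(\eta))$ for $\eta\in\mathbb{H}^n\setminus B_R$ (note $(u(\xi)-u(\eta))_-\le u_+(\eta)$, not $u_-(\eta)$); the objects you name are the right ones, but the displayed inequality as written is not. Second, if you do insist on the $L^p$-log route, the weight $\|\eta^{-1}\circ\xi\|^{-Q}$ is not integrable on $B_r\times B_r$, so the H\"older step must be done against $\|\eta^{-1}\circ\xi\|^{-Q-sp}$ with the complementary factor $\|\eta^{-1}\circ\xi\|^{sp/(p-1)}$; this works dimensionally but changes how the tail terms enter and costs an extra Young step to recover the additive form of the right-hand side.
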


\begin{proof} Let us give some notations as below,
\[{H_\rho }\left( {\xi ,\eta ,\tau } \right) = \frac{{{\tau ^p}}}{{{\rho ^{sp}}}} + a\left( {\xi ,\eta } \right)\frac{{{\tau ^q}}}{{{\rho ^{tq}}}},\;\;\;{h_\rho}\left( {\xi ,\eta ,\tau } \right) = \frac{{{\tau ^{p - 1}}}}{{{\rho^{sp}}}} + a\left( {\xi ,\eta } \right)\frac{{{\tau ^{q - 1}}}}{{{\rho^{tq}}}}\]
and
\[{G_\rho }\left( \tau  \right) = \frac{{{\tau ^p}}}{{{\rho ^{sp}}}} + a_\rho ^ + \frac{{{\tau ^q}}}{{{\rho ^{tq}}}},\;\;\;{g_\rho }\left( \tau  \right) = \frac{{{\tau ^{p - 1}}}}{{{\rho ^{sp}}}} + a_\rho ^ + \frac{{{\tau ^{q - 1}}}}{{{\rho ^{tq}}}}\]
with $a_\rho ^ + : = \mathop {\sup }\limits_{{B_\rho } \times {B_\rho }} a\left( { \cdot , \cdot } \right)$ and $\tau \ge 0$.

Consider a cut-off function $\phi\in C_0^\infty \left( {{B_{\frac{{3r}}{2}}}\left( {{\xi _0}} \right)} \right)$ satisfying
\begin{center}
$0 \le \phi\le 1$, \quad $\phi\equiv 1$ in $B_r$ \quad and \quad $\left| {{\nabla _H}\phi} \right| \le \frac{c}{r}$ in ${{B_{\frac{{3r}}{2}}}}$.
\end{center}
Taking the test function $\varphi \left( \xi  \right): = \frac{{\phi^q\left( \xi  \right)}}{{{g_{2r}}\left( {u\left( \xi  \right) + d} \right)}}$, we have from the weak formulation that
\begin{align}\label{eq51}
  0 =& \int_{{B_{2r}}} {\int_{{B_{2r}}} {\Bigg[\frac{{{J_p}\left( {u\left( \xi  \right) - u\left( \eta  \right)} \right)}}{{\| {{\eta ^{ - 1}} \circ \xi } \|_{{\mathbb{H}^n}}^{Q + sp}}}} } \left( {\frac{{{\phi^q}\left( \xi  \right)}}{{{g_{2r}}\left( {\overline{u}( \xi ) } \right)}} - \frac{{\phi^q( \xi)}}{{{g_{2r}}\left( {\overline{u}( \eta)} \right)}}} \right)\nonumber \\
   &  + a\left( {\xi ,\eta } \right)\frac{{{J_q}\left( {u\left( \xi  \right) - u\left( \eta  \right)} \right)}}{{\| {{\eta ^{ - 1}} \circ \xi } \|_{{\mathbb{H}^n}}^{Q + tq}}}\left( {\frac{{\phi^q\left( \xi  \right)}}{{{g_{2r}}\left( {\bar u\left( \xi  \right)} \right)}} - \frac{{\phi^q\left( \xi  \right)}}{{{g_{2r}}\left( {\bar u\left( \eta  \right)} \right)}}} \right)\Bigg]\,d\xi d\eta \nonumber \\
  &+2{\int _{{\mathbb{H}^n}\backslash {B_{2r}}}}\int_{{B_{2r}}} {\left[\frac{{{J_p}\left( {u\left( \xi  \right) - u\left( \eta  \right)} \right)}}{{\| {{\eta ^{ - 1}} \circ \xi } \|_{{\mathbb{H}^n}}^{Q + sp}}} + a\left( {\xi ,\eta } \right)\frac{{{J_q}\left( {u\left( \xi  \right) - u\left( \eta  \right)} \right)}}{{\| {{\eta ^{ - 1}} \circ \xi } \|_{{\mathbb{H}^n}}^{Q + tq}}}\right]\frac{\phi^q(\xi)}{g_{2r}(\overline{u}(\xi))}\,d\xi d\eta } \nonumber \\
  =:I_1+I_2
\end{align}
with $\bar u: = u + d$.

In what follows, we deal with $I_1$ in the case $\bar u\left( \xi  \right) \ge \bar u\left( \eta  \right)$ that is divided into two subcases:
\begin{equation}\label{eq52-1}
\bar u\left( \xi  \right) \ge \bar u\left( \eta  \right)\ge \frac{1}{2}\bar u\left( \xi  \right)
\end{equation}
and
\begin{equation}\label{eq52-2}
\bar u\left( \xi  \right) \ge 2\bar u\left( \eta  \right).
\end{equation}
If \eqref{eq52-1} occurs, we first observe that
\begin{align}\label{eq53}
 &\quad  \frac{{{\phi ^q}\left( \xi  \right)}}{{{g_{2r}}\left( {\bar u\left( \xi  \right)} \right)}} - \frac{{{\phi^q}\left( \eta  \right)}}{{{g_{2r}}\left( {\bar u\left( \eta  \right)} \right)}}\nonumber \\
   &\le  \frac{c{{\phi^{q - 1}}\left( \xi  \right)\sup_{B_{3R/2}} \left| {{\nabla _H}\phi} \right|{{\| {{\eta ^{ - 1}} \circ \xi } \|}_{{\mathbb{H}^n}}}}}{{{g_{2r}}\left( {\bar u\left( \eta  \right)} \right)}} +{\phi^q}\left( \xi  \right)\int_0^1 {\frac{d}{{d\sigma }}\left( {g_{2r}^{ - 1}\left( {\sigma \bar u\left( \xi  \right) + (1 - \sigma )\bar u\left( \eta  \right)} \right)} \right)\,d\sigma } \nonumber \\
   & \le \frac{c{{\phi^{q - 1}}\left( \xi  \right){r^{ - 1}}{{\| {{\eta ^{ - 1}} \circ \xi } \|}_{{\mathbb{H}^n}}}}}{{{g_{2r}}\left( {\bar u\left( \eta  \right)} \right)}} - \frac{{\left( {p - 1} \right){\phi^q}\left( \xi  \right)\left( {\bar u\left( \xi  \right) - \bar u\left( \eta  \right)} \right)}}{{{2^q}{G_{2r}}\left( {\bar u\left( \eta  \right)} \right)}},
\end{align}
where we can see that the first inequality holds naturally when $\phi(\xi) \le \phi(\eta)$. Here we have used
\[\int_0^1 {\frac{d}{{d\sigma }}\left( {g_{2r}^{ - 1}\left( {\sigma \bar u\left( \xi  \right) + (1 - \sigma )\bar u\left( \eta  \right)} \right)} \right)\,d\sigma }  \ge \frac{{\left( {p - 1} \right)\left( {\bar u\left( \xi  \right) - \bar u\left( \eta  \right)} \right)}}{{{G_{2r}}\left( {\bar u\left( \xi  \right)} \right)}} \ge \frac{{\left( {p - 1} \right)\left( {\bar u\left( \xi  \right) - \bar u\left( \eta  \right)} \right)}}{{{2^q}{G_{2r}}\left( {\bar u\left( \eta  \right)} \right)}},\]
the details of which can be found in \cite{BOS22}. Then, combining \eqref{eq53} and Young's inequality yields
\begin{align}\label{eq54}
   F\left( {\xi ,\eta } \right)&: = \left( {\frac{{{J_p}\left( {u\left( \xi  \right) - u\left( \eta  \right)} \right)}}{{\| {{\eta ^{ - 1}} \circ \xi } \|_{{\mathbb{H}^n}}^{sp}}} + a\left( {\xi ,\eta } \right)\frac{{{J_q}\left( {u\left( \xi  \right) - u\left( \eta  \right)} \right)}}{{\| {{\eta ^{ - 1}} \circ \xi } \|_{{\mathbb{H}^n}}^{tq}}}} \right)\left( {\frac{{{\phi^q}\left( \xi  \right)}}{{{g_{2r}}\left( {\bar u\left( \xi  \right)} \right)}} - \frac{{{\phi^q}( \eta)}}{{{g_{2r}}\left( {\bar u\left( \eta  \right)} \right)}}} \right)\nonumber \\
   &\le  \frac{{c{\phi^{q - 1}}\left( \xi  \right){r^{ - 1}}{{\| {{\eta ^{ - 1}} \circ \xi } \|}_{{\mathbb{H}^n}}}\bar u\left( \eta  \right)}}{{{G_{2r}}\left( {\bar u\left( \eta  \right)} \right)}}\left( {\frac{{{{\left| {\bar u\left( \xi  \right) - \bar u\left( \eta  \right)} \right|}^{p - 1}}}}{{\| {{\eta ^{ - 1}} \circ \xi } \|_{{\mathbb{H}^n}}^{sp}}} + a\left( {\xi ,\eta } \right)\frac{{{{\left| {\bar u\left( \xi  \right) - \bar u\left( \eta  \right)} \right|}^{q - 1}}}}{{\| {{\eta ^{ - 1}} \circ \xi } \|_{{\mathbb{H}^n}}^{tq}}}} \right)\nonumber \\
  &\quad - \frac{{\left( {p - 1} \right){\phi^q}\left( \xi  \right)H\left( {\xi ,\eta ,\bar u\left( \xi  \right) - \bar u\left( \eta  \right)} \right)}}{{{2^q}{G_{2r}}\left( {\bar u\left( \eta  \right)} \right)}} \nonumber \\
   &\le \frac{{\varepsilon {\phi^{\frac{{\left( {q - 1} \right)p}}{{p - 1}}}}\left( \xi  \right){{\left| {\bar u\left( \xi  \right) - \bar u\left( \eta  \right)} \right|}^p}}}{{{G_{2r}}\left( {\bar u\left( \eta  \right)} \right)\| {{\eta ^{ - 1}} \circ \xi } \|_{{\mathbb{H}^n}}^{sp}}} + a\left( {\xi ,\eta } \right)\frac{{\varepsilon {\phi^q}\left( \xi  \right){{\left| {\bar u\left( \xi  \right) - \bar u\left( \eta  \right)} \right|}^q}}}{{{G_{2r}}\left( {\bar u\left( \eta  \right)} \right)\| {{\eta ^{ - 1}} \circ \xi } \|_{{\mathbb{H}^n}}^{tq}}} \nonumber \\
   &\quad - \frac{{\left( {p - 1} \right){\phi^q}\left( \xi  \right)H\left( {\xi ,\eta ,\bar u\left( \xi  \right) - \bar u\left( \eta  \right)} \right)}}{{{2^q}{G_{2r}}\left( {\bar u\left( \eta  \right)} \right)}}\nonumber \\
   &\quad + c\left( \varepsilon  \right)\frac{{{r^{ - p}}\| {{\eta ^{ - 1}} \circ \xi } \|_{{\mathbb{H}^n}}^p{{\left| {\bar u\left( \eta  \right)} \right|}^p}}}{{{G_{2r}}\left( {\bar u\left( \eta  \right)} \right)\| {{\eta ^{ - 1}} \circ \xi } \|_{{\mathbb{H}^n}}^{sp}}} + c\left( \varepsilon  \right)a_{2r}^ + \frac{{{r^{ - q}}\| {{\eta ^{ - 1}} \circ \xi } \|_{{\mathbb{H}^n}}^q{{\left| {\bar u\left( \eta  \right)} \right|}^q}}}{{{G_{2r}}\left( {\bar u\left( \eta  \right)} \right)\| {{\eta ^{ - 1}} \circ \xi } \|_{{\mathbb{H}^n}}^{tq}}}\nonumber \\
    &\le  - \frac{{\left( {p - 1} \right){\phi^q}\left( \xi  \right)H\left( {\xi ,\eta ,\bar u\left( \xi  \right) - \bar u\left( \eta  \right)} \right)}}{{{2^{q + 1}}{G_{2r}}\left( {\bar u\left( \eta  \right)} \right)}} + c\frac{{{r^{p\left( {s - 1} \right)}}}}{{\| {{\eta ^{ - 1}} \circ \xi } \|_{{\mathbb{H}^n}}^{p\left( {s - 1} \right)}}} + c\frac{{{r^{q\left( {t - 1} \right)}}}}{{\| {{\eta ^{ - 1}} \circ \xi } \|_{{\mathbb{H}^n}}^{q\left( {t - 1} \right)}}},
\end{align}
where $\varepsilon $ was chosen as $\frac{{p - 1}}{{{2^{q + 1}}}}$ and $\frac{{\left( {q - 1} \right)p}}{{p - 1}} > q$ and $c>0$ is independent of $a$. We proceed to evaluate ${{G_{2r}}\left( {\bar u\left( \eta  \right)} \right)}$. For $\xi,\eta \in B_{2r}$, recalling the H\"{o}lder continuity of $a$, we get
\[a_{2r}^ +  = a_{2r}^ +  - a\left( {\xi ,\eta } \right) + a\left( {\xi ,\eta } \right) \le 2{\left[ a \right]_\alpha }{\left( {4r} \right)^\alpha } + a\left( {\xi ,\eta } \right).\]
Thus this implies by the facts that $r\le 1$ and $tq \le sp+\alpha$ that
\begin{align}\label{eq55}
   {G_{2r}}\left( {\bar u\left( \eta  \right)} \right) & \le \frac{{{{\bar u}^p}\left( \eta  \right)}}{{{{\left( {2r} \right)}^{sp}}}} + 2{\left[ a \right]_\alpha }{\left( {4r} \right)^\alpha }\frac{{{{\bar u}^q}\left( \eta  \right)}}{{{{\left( {2r} \right)}^{tq}}}} + a\left( {\xi ,\eta } \right)\frac{{{{\bar u}^q}\left( \eta  \right)}}{{{{\left( {2r} \right)}^{tq}}}}\nonumber \\
   &  \le \left( {1 + 8{{\left[ a \right]}_\alpha }{r^{\alpha  + sp - tq}}\| u \|_{{L^\infty }\left( {\Omega '} \right)}^{q - p}} \right)\frac{{{{\bar u}^p}\left( \eta  \right)}}{{{{\left( {2r} \right)}^{sp}}}} + a\left( {\xi ,\eta } \right)\frac{{{{\bar u}^q}\left( \eta  \right)}}{{{{\left( {2r} \right)}^{tq}}}}\nonumber\\
   & \le c\left( {1 + \| u \|_{{L^\infty }\left( {\Omega '} \right)}^{q - p}} \right){H_{2r}}\left( {\xi ,\eta ,\bar u\left( \eta  \right)} \right).
\end{align}

Next, we will obtain an estimate on $\log \bar u$. It is easy to find
\[\log \frac{{\bar u\left( \xi  \right)}}{{\bar u\left( \eta  \right)}} = \int_0^1 {\frac{{\bar u\left( \xi  \right) - \bar u\left( \eta  \right)}}{{\bar u\left( \eta  \right) + \sigma \left( {\bar u\left( \xi  \right) - \bar u\left( \eta  \right)} \right)}}}\, d\sigma  \le \frac{(\bar u( \xi)-\bar u(\eta))/\|\eta ^{-1}\circ \xi \|_{{\mathbb{H}^n}}^s}
 {\bar u(\eta)/(2r)^s}
 \cdot \frac{\|\eta ^{ - 1}\circ \xi\|_{{\mathbb{H}^n}}^s}{(2r)^s},\]
so there holds that
 \begin{align}\label{eq56}
\log \frac{\bar u(\xi)}{u(\eta)}&\le\frac{\|\eta ^{ - 1} \circ \xi \|_{\mathbb{H}^n}^s}{(2r)^s}\cdot\left[\frac{\left(\frac{\bar u(\xi)-\bar u(\eta)}{\|\eta^{-1}\circ\xi\|_{\mathbb{H}^n}^s}\right)^p+a(\xi,\eta)\left(\frac{\bar u(\xi)-\bar u(\eta)}{\|\eta^{-1}\circ\xi\|_{\mathbb{H}^n}^s}\right)^q\|\eta^{-1}\circ\xi\|_{\mathbb{H}^n}^{-(t-s)q}}{\left(\frac{\bar u(\eta)}{(2r)^s}\right)^p+a(\xi,\eta)\left(\frac{\bar u(\eta)}{(2r)^s}\right)^q\|\eta^{-1}\circ\xi\|_{\mathbb{H}^n}^{-(t-s)q}}+ 1\right] \nonumber\\
    &\le  \frac{{cH\left( {\xi ,\eta ,\bar u\left( \xi  \right) - \bar u\left( \eta  \right)} \right)}}{{{H_{2r}}\left( {\xi ,\eta ,\bar u\left( \eta  \right)} \right)}} + \frac{{\| {{\eta ^{ - 1}} \circ \xi } \|_{{\mathbb{H}^n}}^s}}{{{{\left( {2r} \right)}^s}}},
 \end{align}
where we need to note ${\| {{\eta ^{ - 1}} \circ \xi } \|_{{\mathbb{H}^n}}} \le 4r$. It follows from \eqref{eq54}-\eqref{eq56} that
\[F\left( {\xi ,\eta } \right) \le  - \frac{{{\phi^q}\left( \xi  \right)}}{{cK}}\log \frac{{\bar u\left( \xi  \right)}}{{\bar u\left( \eta  \right)}} + \frac{{c\| {{\eta ^{ - 1}} \circ \xi } \|_{{\mathbb{H}^n}}^s}}{{{{\left( {2r} \right)}^s}}} + \frac{{c\| {{\eta ^{ - 1}} \circ \xi } \|_{{\mathbb{H}^n}}^{p(1 - s)}}}{{{{\left( {2r} \right)}^{p(1 - s)}}}} + \frac{{c\| {{\eta ^{ - 1}} \circ \xi } \|_{{\mathbb{H}^n}}^{q(1 - t)}}}{{{{\left( {2r} \right)}^{q(1 - t)}}}}.\]
%with $k:=1+d^{q-p}+\| u \|_{{L^\infty }\left( {\Omega '} \right)}^{q - p}$ throughout this proof.

Second, we in the case \eqref{eq52-2} tackle the integral $I_1$. Applying Lemma \ref{Le26} and the relation $\bar u\left( \xi  \right) \ge 2\bar u\left( \eta  \right)$, we could derive
\begin{align*}
   \frac{{{\phi^q}\left( \xi  \right)}}{{{g_{2r}}\left( {\bar u\left( \xi  \right)} \right)}} - \frac{{{\phi^q}( \eta)}}{{{g_{2r}}\left( {\bar u\left( \eta  \right)} \right)}}&\le  \frac{{{\phi^q}\left( \xi  \right) - {\phi^q}\left( \eta  \right)}}{{{g_{2r}}\left( {\bar u\left( \xi  \right)} \right)}} + {\phi ^q}\left( \eta  \right)\left( {\frac{1}{{{g_{2r}}\left( {2\bar u\left( \eta  \right)} \right)}} - \frac{1}{{{g_{2r}}\left( {\bar u\left( \eta  \right)} \right)}}} \right)\\
    &\le\frac{{\varepsilon {\phi^q}\left( \eta  \right) + c\left( \varepsilon  \right){{\left| {\phi\left( \xi  \right) - \phi\left( \eta  \right)} \right|}^q}}}{{{g_{2r}}\left( {\bar u\left( \xi  \right)} \right)}} - \frac{{{2^{p - 1}} - 1}}{{{2^{p - 1}}}}\frac{{\phi ^q( \eta)}}{{{g_{2r}}\left( {\bar u\left( \eta  \right)} \right)}}\\
     &\le\frac{{c{{\left| {\phi( \xi) - \phi( \eta)} \right|}^q}}}{{{g_{2r}}\left( {\bar u\left( \xi  \right)} \right)}} - \frac{{\left( {{2^{p - 1}} - 1} \right){\phi^q}\left( \eta  \right)}}{{{2^p}{g_{2r}}\left( {\bar u\left( \eta  \right)} \right)}}
\end{align*}
with $\varepsilon  = \frac{{{2^{p - 1}} - 1}}{{{2^p}}}$. Thereby, it holds that
\begin{align*}
F\left( {\xi ,\eta } \right)& \le \frac{{ch\left( {\xi ,\eta ,\bar u\left( \xi  \right) - \bar u\left( \eta  \right)} \right){{| {\phi( \xi) -\phi( \eta)}|}^q}}}{{{g_{2r}}\left( {\bar u\left( \xi  \right)} \right)}} - \frac{{h\left( {\xi ,\eta ,\bar u\left( \xi  \right) - \bar u\left( \eta  \right)} \right){\phi ^q}\left( \eta  \right)}}{{c{g_{2r}}\left( {\bar u\left( \eta  \right)} \right)}} \\
   & \le \frac{{c{{\left( {2r} \right)}^{ - q}}\| {{\eta ^{ - 1}} \circ \xi } \|_{{\mathbb{H}^n}}^qh\left( {\xi ,\eta ,\bar u\left( \xi  \right) - \bar u\left( \eta  \right)} \right)}}{{{g_{2r}}\left( {\bar u\left( \xi  \right)} \right)}} - \frac{{h\left( {\xi ,\eta ,\bar u\left( \xi  \right) - \bar u\left( \eta  \right)} \right){\phi^q}\left( \eta  \right)}}{{cK{h_{2r}}\left( {\xi ,\eta ,\bar u\left( \eta  \right)} \right)}}.
\end{align*}
Here $F\left( {\xi ,\eta } \right)$ is the same as that in \eqref{eq54} and the estimate for ${{g_{2r}}\left( {\bar u\left( \eta  \right)} \right)}$ is similar to \eqref{eq55}. Moreover, via $\bar u\left( \xi  \right) \ge 2\bar u\left( \eta  \right)\ge 0$ in $B_{2r}$,
\begin{align*}
   \frac{{h\left( {\xi ,\eta ,\bar u( \xi) - \bar u( \eta)} \right)}}{{{g_{2r}}( {\bar u( \xi)})}}
   \le \frac{\frac{|\bar u(\xi)-\bar u(\eta)|^{p-1}}{\|\eta^{-1} \circ \xi\|_{\mathbb{H}^n}^{sp}}+a(\xi,\eta)\frac{|\bar u(\xi)-\bar u(\eta)|^{q-1}}{\|\eta^{-1} \circ \xi\|_{\mathbb{H}^n}^{tq}}}{\frac{|\bar u(\xi)-\bar u(\eta)|^{p-1}}{(2r)^{sp}}+a_{2r}^ +\frac{|\bar u(\xi)-\bar u(\eta)|^{q-1}}{(2r)^{tq}}}
  \le \frac{{{{\left( {2r} \right)}^{sp}}}}{{\| {{\eta ^{ - 1}} \circ \xi } \|_{{\mathbb{H}^n}}^{sp}}} + \frac{{{{\left( {2r} \right)}^{tq}}}}{{\| {{\eta ^{ - 1}} \circ \xi } \|_{{\mathbb{H}^n}}^{tq}}},
\end{align*}
and further
\[F\left( {\xi ,\eta } \right) \le \frac{{c\| {{\eta ^{ - 1}} \circ \xi } \|_{{\mathbb{H}^n}}^{q - sp}}}{{{{\left( {2r} \right)}^{q - sp}}}} + \frac{{\| {{\eta ^{ - 1}} \circ \xi } \|_{{\mathbb{H}^n}}^{q\left( {1 - t} \right)}}}{{{{\left( {2r} \right)}^{q\left( {1 - t} \right)}}}} - \frac{{h\left( {\xi ,\eta ,\bar u\left( \xi  \right) - \bar u\left( \eta  \right)} \right){\phi ^q}\left( \eta  \right)}}{{cK{h_{2r}}\left( {\xi ,\eta ,\bar u\left( \eta  \right)} \right)}}.\]

Now we need to get an estimate on $\log \frac{{\bar u\left( \xi  \right)}}{{\bar u\left( \eta  \right)}}$ under \eqref{eq52-2},
\begin{align*}
   \log \frac{\bar u(\xi)}{\bar u( \eta)}& \le \log \frac{2(\bar u( \xi) - \bar u( \eta))}{\bar u( \eta)}
   \le \frac{c\left( (\bar u(\xi) - \bar u( \eta))/\|\eta ^{ - 1} \circ \xi\|_{\mathbb{H}^n}^s\right)^{p - 1}}{(\bar u(\eta)/(2r)^s)^{p - 1}} \frac{{\| {{\eta ^{ - 1}} \circ \xi } \|_{{\mathbb{H}^n}}^{s\left( {p - 1} \right)}}}{{{{\left( {2r} \right)}^{s\left( {p - 1} \right)}}}}\\
   &\le c\frac{{\| {{\eta ^{ - 1}} \circ \xi } \|_{{\mathbb{H}^n}}^{s\left( {p - 1} \right)}}}{{{{\left( {2r} \right)}^{s\left( {p - 1} \right)}}}}\left[\frac{\left(\frac{\bar u(\xi)-\bar u(\eta)}{\|\eta^{-1}\circ\xi\|_{\mathbb{H}^n}^s}\right)^{p-1}+a(\xi,\eta)\left(\frac{\bar u(\xi)-\bar u(\eta)}{\|\eta^{-1}\circ\xi\|_{\mathbb{H}^n}^s}\right)^{q-1}\|\eta^{-1}\circ\xi\|_{\mathbb{H}^n}^{-(t-s)q}}{\left(\frac{\bar u(\eta)}{(2r)^s}\right)^{p-1}+a(\xi,\eta)\left(\frac{\bar u(\eta)}{(2r)^s}\right)^{q-1}\|\eta^{-1}\circ\xi\|_{\mathbb{H}^n}^{-(t-s)q}}+ 1\right]\\
  &\le \frac{{ch\left( {\xi ,\eta ,\bar u\left( \xi  \right) - \bar u\left( \eta  \right)} \right)}}{{{h_{2r}}\left( {\xi ,\eta ,\bar u\left( \eta  \right)} \right)}} + \frac{{c\| {{\eta ^{ - 1}} \circ \xi } \|_{{\mathbb{H}^n}}^{s\left( {p - 1} \right)}}}{{{{\left( {2r} \right)}^{s\left( {p - 1} \right)}}}},
\end{align*}
where the fact ${\| {{\eta ^{ - 1}} \circ \xi } \|_{{\mathbb{H}^n}}} \le 4r$ was utilized. Note $q\ge p$ and ${\| {{\eta ^{ - 1}} \circ \xi } \|_{{\mathbb{H}^n}}} \le 4r$ again. It is clear to obtain
\[F\left( {\xi ,\eta } \right) \le  - \frac{{{\phi^q}\left( \xi  \right)}}{{cK}}\log \frac{{\bar u\left( \xi  \right)}}{{\bar u\left( \eta  \right)}} + \frac{{c\| {{\eta ^{ - 1}} \circ \xi } \|_{{\mathbb{H}^n}}^{p\left( {1 - s} \right)}}}{{{{\left( {2r} \right)}^{p\left( {1 - s} \right)}}}} + \frac{{c\| {{\eta ^{ - 1}} \circ \xi } \|_{{\mathbb{H}^n}}^{q(1 - t)}}}{{{{\left( {2r} \right)}^{q(1 - t)}}}} + \frac{{c\| {{\eta ^{ - 1}} \circ \xi } \|_{{\mathbb{H}^n}}^{s(p - 1)}}}{{{{\left( {2r} \right)}^{s(p - 1)}}}}.\]

At this moment, for $\bar u\left( \xi  \right) \ge \bar u\left( \eta  \right)$, the integral $I_1$ is evaluated as
\begin{align}\label{eq57}
I_1 \le &   - \frac{1}{{cK}}\int_{{B_{2r}}} {\int_{{B_{2r}}} {\min \left\{ {{\phi^q}( \xi),{\phi^q}(\eta)} \right\}\left| {\log \frac{{\bar u(\xi)}}{{\bar u(\eta)}}} \right|\frac{{d\xi d\eta }}{{\| {{\eta ^{-1}}\circ\xi}\|_{{\mathbb{H}^n}}^Q}}} } \nonumber \\
   &  + c\int_{{B_{2r}}} {\int_{{B_{2r}}} {\left( {\frac{{\| {{\eta ^{ - 1}} \circ \xi }\|_{{\mathbb{H}^n}}^{p(1 - s)}}}{{{r^{p(1 - s)}}}} + \frac{{\| {{\eta ^{ - 1}} \circ \xi }\|_{{\mathbb{H}^n}}^{q(1 - t)}}}{{{r^{q(1 - t)}}}} + \frac{{\| {{\eta ^{ - 1}} \circ \xi } \|_{{\mathbb{H}^n}}^{s(p - 1)}}}{{{r^{s( {p - 1})}}}} + \frac{{\| {{\eta ^{ - 1}} \circ \xi } \|_{{\mathbb{H}^n}}^s}}{{{r^s}}}}\right)\frac{{d\xi d\eta }}{{\| {{\eta ^{ - 1}} \circ \xi } \|_{{\mathbb{H}^n}}^Q}}} }\nonumber \\
    \le  &- \frac{1}{{cK}}\int_{{B_{2r}}} {\int_{{B_{2r}}} {\left| {\log \frac{{\bar u( \xi)}}{{\bar u( \eta)}}} \right|\frac{{d\xi d\eta }}{{\| {{\eta ^{ - 1}} \circ \xi }\|_{{\mathbb{H}^n}}^Q}}} }  + c{r^Q},
\end{align}
where
\begin{align*}
\int_{{B_{2r}}} {\int_{{B_{2r}}} {\frac{{\| {{\eta ^{ - 1}} \circ \xi } \|_{{\mathbb{H}^n}}^l}}{{{r^l}}}\frac{{d\xi d\eta }}{{\| {{\eta ^{ - 1}} \circ \xi } \|_{{\mathbb{H}^n}}^Q}}} } & \le \int_{{B_{2r}}} {\int_{{B_{4r}(\eta)}} {\frac{{\| {{\eta ^{ - 1}} \circ \xi } \|_{{\mathbb{H}^n}}^l}}{{{r^l}}}\frac{{d\xi d\eta }}{{\| {{\eta ^{ - 1}} \circ \xi } \|_{{\mathbb{H}^n}}^Q}}} } \\
&\le \frac{c}{r^l}\int_{{B_{2r}}}\int^{4r}_0\rho^{l-1}\,d\rho d\eta \le c{r^Q}.
\end{align*}
Furthermore, if $\bar u\left( \xi  \right) < \bar u\left( \eta  \right)$, the same estimate still holds true through exchanging the roles of $\xi$ and $\eta$.

For the second contribution $I_2$ in \eqref{eq51}, we first observe that if $\eta  \in {B_R}$, then ${( {u\left( \xi  \right) - u\left( \eta  \right)} )_ + } \le u\left( \xi  \right) + d$ by $u\left( \eta  \right) \ge 0$, and that if $\eta  \in {\mathbb{H}^n}\backslash {B_R}$, then ${\left( {u\left( \xi  \right) - u\left( \eta  \right)} \right)_ + } \le u\left( \xi  \right) + u_-\left( \eta  \right) \le \bar u\left( \xi  \right) + u_-\left( \eta  \right)$. From this and ${\rm supp}\, \phi\subset {B_{\frac{{3r}}{2}}}$, we can evaluate $I_2$ as
\begin{align}\label{eq58}
  I_2 &\le 2{\int _{{B_R}\backslash {B_{2r}}}}\int_{{B_{\frac{{3r}}{2}}}} {\left[\frac{{\left( {u\left( \xi  \right) - u\left( \eta  \right)} \right)_ + ^{p - 1}}}{{\| {{\eta ^{ - 1}} \circ \xi } \|_{{\mathbb{H}^n}}^{Q + sp}}} + a\left( {\xi ,\eta } \right)\frac{{\left( {u\left( \xi  \right) - u\left( \eta  \right)} \right)_ + ^{q - 1}}}{{\| {{\eta ^{ - 1}} \circ \xi } \|_{{\mathbb{H}^n}}^{Q + tq}}}\right]\frac{{d\xi d\eta }}{{{g_{2r}}\left( {\bar u\left( \xi  \right)} \right)}}} \nonumber \\
   &\quad  + 2{\int _{{\mathbb{H}^n}\backslash {B_R}}}\int_{{B_{\frac{{3r}}{2}}}} {\left[\frac{{\left( {u\left( \xi  \right) - u\left( \eta  \right)} \right)_ + ^{p - 1}}}{{\| {{\eta ^{ - 1}} \circ \xi } \|_{{\mathbb{H}^n}}^{Q + sp}}} + a\left( {\xi ,\eta } \right)\frac{{\left( {u\left( \xi  \right) - u\left( \eta  \right)} \right)_ + ^{q - 1}}}{{\| {{\eta ^{ - 1}} \circ \xi } \|_{{\mathbb{H}^n}}^{Q + tq}}}\right]\frac{{d\xi d\eta }}{{{g_{2r}}\left( {\bar u\left( \xi  \right)} \right)}}}\nonumber \\
  & \le c{\int _{{\mathbb{H}^n}\backslash {B_{2r}}}}\int_{{B_{\frac{{3r}}{2}}}} {\frac{{h\left( {\xi ,\eta ,\bar u\left( \xi  \right)} \right)}}{{{g_{2r}}\left( {\bar u\left( \xi  \right)} \right)\| {{\eta ^{ - 1}} \circ \xi }\|_{{\mathbb{H}^n}}^Q}}\,d\xi d\eta }  + c{\int _{{\mathbb{H}^n}\backslash {B_R}}}\int_{{B_{\frac{{3r}}{2}}}} {\frac{{h\left( {\xi ,\eta ,u\left( \eta  \right)} \right)}}{{{g_{2r}}\left( {\bar u\left( \xi  \right)} \right)\| {{\eta ^{ - 1}} \circ \xi } \|_{{\mathbb{H}^n}}^Q}}\,d\xi d\eta } \nonumber \\
   &= :{I_{21}} + {I_{22}}.
\end{align}

We now intend to control preciesly the term $\frac{{h\left( {\xi ,\eta ,\bar u\left( \xi  \right)} \right)}}{{{g_{2r}}\left( {\bar u\left( \xi  \right)} \right)}}$ by some constants. In view of the condition \eqref{eqy110}, there holds that, for $\xi \in B_{2r}$ and $\eta \in \mathbb{H}^n$,
\begin{align}\label{eq59}
   a\left( {\xi ,\eta } \right)& \le a\left( {\xi ,\eta } \right) - a\left( {\xi ,\xi } \right) + a_{2r}^ +   \nonumber \\
   &  \le {\left( {2{{\| a \|}_{{L^\infty }}}} \right)^{1 - \frac{{tq - sp}}{\alpha }}}{\left| {a\left( {\xi ,\eta } \right) - a\left( {\xi ,\xi } \right)} \right|^{\frac{{tq - sp}}{\alpha }}} + a_{2r}^ + \nonumber \\
   & \le c\| {{\eta ^{ - 1}} \circ \xi } \|_{{\mathbb{H}^n}}^{tq - sp} + a_{2r}^ + .
\end{align}
This indicates
\begin{align*}
 {I_{21}}  &  \le c{\int _{{\mathbb{H}^n}\backslash {B_{2r}}}}\int_{{B_{\frac{{3r}}{2}}}} {\frac{{\frac{{{{\bar u}^{p - 1}}\left( \xi  \right) + {{\bar u}^{q - 1}}\left( \xi  \right)}}{{\| {{\eta ^{ - 1}} \circ \xi } \|_{{\mathbb{H}^n}}^{sp}}} + a_{2r}^ + \frac{{{{\bar u}^{q - 1}}\left( \xi  \right)}}{{\| {{\eta ^{ - 1}} \circ \xi } \|_{{\mathbb{H}^n}}^{tq}}}}}{{\frac{{{{\bar u}^{p - 1}}\left( \xi  \right)}}{{\| {{\eta ^{ - 1}} \circ \xi } \|_{{\mathbb{H}^n}}^{sp}}}\frac{{\| {{\eta ^{ - 1}} \circ \xi } \|_{{\mathbb{H}^n}}^{sp}}}{{{{\left( {2r} \right)}^{sp}}}} + a_{2r}^ + \frac{{{{\bar u}^{q - 1}}\left( \xi  \right)}}{{\| {{\eta ^{ - 1}} \circ \xi } \|_{{\mathbb{H}^n}}^{tq}}}\frac{{\| {{\eta ^{ - 1}} \circ \xi } \|_{{\mathbb{H}^n}}^{tq}}}{{{{\left( {2r} \right)}^{tq}}}}}}\frac{{d\xi d\eta }}{{\| {{\eta ^{ - 1}} \circ \xi } \|_{\mathbb{H}{^n}}^Q}}}  \\
  &  \le cK{\int _{{\mathbb{H}^n}\backslash {B_{2r}}}}\int_{{B_{\frac{{3r}}{2}}}} {\frac{{{{\left( {\frac{r}{2}} \right)}^{sp}}}}{{\| {{\eta ^{ - 1}} \circ \xi } \|_{{\mathbb{H}^n}}^{Q + sp}}}\,d\xi d\eta }
\end{align*}
by virtue of ${\| {{\eta ^{ - 1}} \circ \xi } \|_{{\mathbb{H}^n}}} > \frac{r}{2}$. For $\xi  \in {B_{\frac{{3r}}{2}}}$ and $\eta  \in \mathbb{H}^n\backslash {B_{2r}}$, we derive that
\begin{align}\label{eq510}
   {\| {{\eta ^{ - 1}} \circ {\xi _0}} \|_{{\mathbb{H}^n}}} & \le {\| {{\eta ^{ - 1}} \circ \xi } \|_{{\mathbb{H}^n}}} + {\| {{\xi ^{ - 1}} \circ {\xi _0}} \|_{{\mathbb{H}^n}}} \nonumber \\
   &= \left( {1 + \frac{{{{\| {{\xi ^{ - 1}} \circ {\xi _0}} \|}_{{\mathbb{H}^n}}}}}{{{{\| {{\eta ^{ - 1}} \circ \xi } \|}_{{\mathbb{H}^n}}}}}} \right){\| {{\eta ^{ - 1}} \circ \xi } \|_{{\mathbb{H}^n}}} \nonumber\\
   &  \le \left( 1 + \frac{3r/2}{r/2} \right){\| {{\eta ^{ - 1}} \circ \xi } \|_{{\mathbb{H}^n}}} = 4{\| {{\eta ^{ - 1}} \circ \xi }\|_{{\mathbb{H}^n}}},
\end{align}
where for the first inequality we can refer to \cite{C81}. Thus by \cite[Lemma 2.6]{MPP23},
\begin{equation}\label{eq511}
  {I_{21}} \le cK\left| {{B_{\frac{{3r}}{2}}}} \right|{\int _{{\mathbb{H}^n}\backslash {B_{2r}}}}\frac{{{r^{sp}}}}{{\| {{\eta ^{ - 1}} \circ {\xi _0}} \|_{{\mathbb{H}^n}}^{Q + sp}}}\,d\eta  \le cK{r^Q}.
\end{equation}
Let us proceed to examine $I_{22}$. With the aid of \eqref{eq59}, \eqref{eq510} and $u(\xi)\ge 0$ in ${{B_{\frac{{3r}}{2}}}}$,
\begin{align}\label{eq512}
  I_{22} &  \le c{\int _{{\mathbb{H}^n}\backslash {B_R}}}\int_{{B_{\frac{{3r}}{2}}}} {\left( {\frac{{u_ - ^{p - 1}( \eta) + u_ - ^{q - 1}( \eta)}}{{\| {{\eta ^{ - 1}} \circ \xi }\|_{{\mathbb{H}^n}}^{Q + sp}}} + a_{2r}^ + \frac{{u_ - ^{q - 1}( \eta  )}}{{\| {{\eta ^{ - 1}} \circ \xi }\|_{{\mathbb{H}^n}}^{Q + tq}}}} \right){g^{ - 1}}( d)\,d\xi d\eta } \nonumber \\
   &  \le c{r^Q}{g^{ - 1}}( d){\int _{{\mathbb{H}^n}\backslash {B_R}}}\left( {\frac{{u_ - ^{p - 1}( \eta) + u_ - ^{q - 1}( \eta)}}{{\| {{\eta ^{ - 1}} \circ {\xi _0}}\|_{{\mathbb{H}^n}}^{Q + sp}}} + a_{2r}^ + \frac{{u_ - ^{q - 1}(\eta)}}{{\| {{\eta ^{ - 1}} \circ {\xi _0}}\|_{{\mathbb{H}^n}}^{Q + tq}}}} \right)\,d\eta\nonumber \\
   & \le c{r^{Q+sp}}{d^{1 - p}}{\int _{{\mathbb{H}^n}\backslash {B_R}}}\frac{{u_ - ^{p - 1}( \eta) + u_ - ^{q - 1}( \eta)}}{{\| {{\eta ^{ - 1}} \circ {\xi _0}}\|_{{\mathbb{H}^n}}^{Q + sp}}}d\eta  + c{r^{Q + tq}}{d^{1 - q}}{\int _{{\mathbb{H}^n}\backslash {B_R}}}\frac{{u_ - ^{q - 1}( \eta)}}{{\| {{\eta ^{ - 1}} \circ {\xi _0}}\|_{{\mathbb{H}^n}}^{Q + tq}}}\,d\eta,
\end{align}
where we notice $\eta  \in {\mathbb{H}^n}\backslash {B_R} \subset {\mathbb{H}^n}\backslash {B_{2r}}$.

Merging \eqref{eq57}, \eqref{eq58}, \eqref{eq511}, \eqref{eq512} with \eqref{eq51} arrives eventually at the desired estimate with the positive constant $c$ depending upon $n,p,q,s,t,\alpha,[a]_{\alpha} $ and $\| a \|_{L^\infty }$.%\left( {{\mathbb{H}^n}} \right)}}$.
\end{proof}

\begin{corollary}\label{Co52}
  Let the assumptions of Lemma \ref{Le51} be in force. Define
\[w: = \min \left\{ {{{\left( {\log \left( {\tau  + d} \right) - \log \left( {u + d} \right)} \right)}_ + },\log b} \right\}\]
with $\tau,d>0$ and $b>1$. Then for the weak solution $u$ of \eqref{eqy11} it holds that
\begin{align*}
   \fint_{{B_r}} {| {w - {{( w)}_r}}|\,d\eta }  \le c{K^2}\left(1 + \frac{r^{sp}}{d^{p -1}}{\int _{{\mathbb{H}^n}\backslash {B_R}}}\frac{{u_ - ^{p - 1}( \eta) + u_ - ^{q - 1}( \eta)}}{{\| {{\eta ^{ - 1}} \circ {\xi _0}} \|_{{\mathbb{H}^n}}^{Q + sp}}}\,d\eta
    + \frac{r^{tq}}{d^{q - 1}}{\int _{{\mathbb{H}^n}\backslash {B_R}}}\frac{{u_ - ^{q - 1}( \eta)}}{{\| {{\eta ^{ - 1}} \circ {\xi _0}}\|_{{\mathbb{H}^n}}^{Q + tq}}}\,d\eta \right),
\end{align*}
where $c>1$ depends on $n,p,q,s,t$ and $\alpha, {\left[ a \right]_\alpha },\| a \|_{L^\infty }$, and $K$ is defined as in Lemma \ref{Le51}.
\end{corollary}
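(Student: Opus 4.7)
\textbf{Proof proposal for Corollary \ref{Co52}.} The plan is to reduce the statement to Lemma \ref{Le51} via three elementary steps: an $L^{1}$ mean oscillation bound, a Lipschitz-composition argument, and the insertion of the Heisenberg kernel $\|\eta^{-1}\circ\xi\|_{\mathbb{H}^{n}}^{-Q}$. No new estimates on the weak solution $u$ itself are required beyond what Lemma \ref{Le51} already provides, so the argument should be short.

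First I would invoke Jensen's inequality to get the trivial $L^{1}$ Poincar\'e type bound
\[
\fint_{B_{r}}|w-(w)_{r}|\,d\eta \;\le\; \fint_{B_{r}}\fint_{B_{r}}|w(\xi)-w(\eta)|\,d\xi d\eta.
\]
Next I would observe that the map $\sigma\mapsto\min\{\sigma_{+},\log b\}$ is $1$-Lipschitz on $\mathbb{R}$, so writing $w(\xi)=\min\{(\log(\tau+d)-\log(u(\xi)+d))_{+},\log b\}$ gives the pointwise comparison
\[
|w(\xi)-w(\eta)| \;\le\; \Bigl|\log\frac{u(\xi)+d}{u(\eta)+d}\Bigr|
\qquad\text{for a.e. }\xi,\eta\in B_{r}.
\]

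Third, since $\|\eta^{-1}\circ\xi\|_{\mathbb{H}^{n}}\le 2r$ for all $\xi,\eta\in B_{r}$, I would multiply and divide by $\|\eta^{-1}\circ\xi\|_{\mathbb{H}^{n}}^{Q}$ to match the kernel appearing in Lemma \ref{Le51}, producing the harmless factor $(2r)^{Q}$:
\[
\int_{B_{r}}\!\!\int_{B_{r}}\Bigl|\log\tfrac{u(\xi)+d}{u(\eta)+d}\Bigr|\,d\xi d\eta
\;\le\; (2r)^{Q}\int_{B_{r}}\!\!\int_{B_{r}}\Bigl|\log\tfrac{u(\xi)+d}{u(\eta)+d}\Bigr|\frac{d\xi d\eta}{\|\eta^{-1}\circ\xi\|_{\mathbb{H}^{n}}^{Q}}.
\]
Applying Lemma \ref{Le51} directly to the last integral then yields the bound by
$cK^{2}\bigl(r^{Q}+r^{Q+sp}d^{1-p}\,\mathcal{T}_{sp}+r^{Q+tq}d^{1-q}\,\mathcal{T}_{tq}\bigr)$
with $\mathcal{T}_{sp},\mathcal{T}_{tq}$ the two tail integrals in the statement of the corollary.

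Finally, dividing by $|B_{r}|^{2}\asymp r^{2Q}$ gives the three factors $r^{Q}\cdot r^{Q}/r^{2Q}=1$, $r^{Q}\cdot r^{Q+sp}/r^{2Q}=r^{sp}$, and $r^{Q}\cdot r^{Q+tq}/r^{2Q}=r^{tq}$, producing exactly the displayed inequality. The only point requiring any care is the $1$-Lipschitz check in the second step (so that the truncation at $\log b$ does not introduce extra terms), but this is immediate since both $\sigma\mapsto\sigma_{+}$ and $\sigma\mapsto\min\{\sigma,\log b\}$ are nonexpansive on $\mathbb{R}$; there is no genuine obstacle, and the dependencies of the constant $c$ and of $K$ carry over unchanged from Lemma \ref{Le51}.
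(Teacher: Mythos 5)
Your proposal is correct and follows essentially the same route as the paper: the $L^1$ mean-oscillation bound, the $1$-Lipschitz truncation comparison $|w(\xi)-w(\eta)|\le|\log\frac{u(\xi)+d}{u(\eta)+d}|$, insertion of the factor $(2r)^Q\|\eta^{-1}\circ\xi\|_{\mathbb{H}^n}^{-Q}\ge 1$, and a direct application of Lemma \ref{Le51} followed by the normalization by $|B_r|\asymp r^Q$. The bookkeeping of the powers of $r$ and the dependencies of $c$ and $K$ is accurate.
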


\begin{proof} Notice that, since $w$ is a truncation of $\log (u+d)$,
\begin{align*}
   \fint_{{B_r}} {| {w - {( w)}_r}|\,d\eta } & \le \fint_{{B_r}} {\left| {\fint_{{B_r}} {( {w( \eta) - w( \xi)})\,d\xi } } \right|\,d\eta }  \\
   &  \le \fint_{{B_r}} {\fint_{{B_r}} {| {w( \xi ) - w( \eta)}|\,d\xi } d\eta } \\
   & \le \fint_{{B_r}} {\fint_{{B_r}} {\frac{{\left| {\log \left( {u\left( \xi  \right) + d} \right) - \log \left( {u\left( \eta  \right) + d} \right)} \right|}}{{{\| {{\eta ^{ - 1}} \circ \xi } \|_{{\mathbb{H}^n}}^Q}/{{{\left( {2r} \right)}^Q}}}}\,d\xi } d\eta } \\
   & \le \fint_{{B_r}} {\int_{{B_r}} {\left| {\log \frac{{u\left( \xi  \right) + d}}{{u\left( \eta  \right) + d}}} \right|\frac{{d\xi d\eta }}{{\| {{\eta ^{ - 1}} \circ \xi }\|_{{\mathbb{H}^n}}^Q}}} } .
\end{align*}
Then the desired result is a plain consequence of Lemma \ref{Le51}.
\end{proof}

In the end, we will focus on establishing H\"{o}lder regularity of weak solutions. For this aim, it is sufficient to show an oscillation improvement result, Theorem \ref{thm 5.1}. Before proceeding, let us introduce some notations. For $j\in\mathbb{N}\cup\{0\}$, set
$$
r_j:=\sigma^jr, \quad \sigma\in(0,1/4] ,\quad  B_j:=B_{r_j}(\xi_0) \quad\text{and} \quad 2B_j:=B_{2r_j},
$$
where we fix any ball $B_{2r}(\xi_0)\subset\Omega'\subset\subset\Omega$. Furthermore, define
$$
\omega(r_0):=2\sup_{B_r}|u|+\left(r^{sp}\int_{\mathbb{H}^n\setminus B_r}\frac{|u|^{p-1}+|u|^{q-1}}{\|\xi_0^{-1}\circ\xi\|^{Q+sp}}\,d\xi\right)^\frac{1}{p-1}+\left(r^{tq}\int_{\mathbb{H}^n\setminus B_r}\frac{|u|^{q-1}}{\|\xi_0^{-1}\circ\xi\|^{Q+tq}}\,d\xi\right)^\frac{1}{q-1}
$$
and
$$
\omega(r_j):=\left(\frac{r_j}{r_0}\right)\omega(r_0)=\sigma^{j\beta}\omega(r) \quad\text{for some } 0<\beta<\frac{sp}{q-1}.
$$
Let us point out that $\sigma$ and $\beta$ are to be determined later.

Now we are in a position to prove the following iteration lemma, which suggests $u\in C^{0,\beta}(B_r)$.

\begin{theorem}\label{thm 5.1}
Let $u\in \mathcal{A}(\Omega)\cap L^{q-1}_{sp}(\mathbb{H}^n)$ be a weak solution to \eqref{eqy11}. Under the conditions \eqref{eqy18}, \eqref{eqy19} and \eqref{eqy110} with $tq\le sp+\alpha$, there holds that
$$
\mathop{\rm osc}\limits_{B_j}u\le \omega(r_j) \quad\text{for any } j\in\mathbb{N}\cup\{0\},
$$
where these notations are fixed as above.
\end{theorem}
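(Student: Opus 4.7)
The proof proceeds by induction on $j$. The base case $j=0$ is immediate: $\mathop{\rm osc}_{B_0}u\le 2\sup_{B_r}|u|\le \omega(r_0)$. For the inductive step, assume the claim holds for all $i\le j$, and set $M_j:=\sup_{B_j}u$ and $m_j:=\inf_{B_j}u$, so that $M_j-m_j\le\omega(r_j)$; the goal is $M_{j+1}-m_{j+1}\le \sigma^{\beta}\omega(r_j)$. Passing to $-u$ if necessary, we may assume the density alternative
\[
|\{\xi\in 2B_{j+1}:u(\xi)\ge (M_j+m_j)/2\}|\ge|2B_{j+1}|/2,
\]
and introduce $v:=u-m_j$, which is nonnegative on $B_j$ and satisfies $v\ge \omega(r_j)/2$ on the density set.

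The first main step is a logarithmic decay of measure. Apply Corollary \ref{Co52} to $v$ on $B_{j+1}\subset 2B_{j+1}\subset B_j$ with $\tau=\omega(r_j)/2$ and $d=\varepsilon\omega(r_j)$ for a small $\varepsilon>0$ to be chosen, and $b=1/(6\varepsilon)$. A direct computation shows that $w=\min\{(\log(\tau+d)-\log(v+d))_+,\log b\}$ vanishes on the density set and is bounded below by $\log(1/(6\varepsilon))$ on $\{v<2\varepsilon\omega(r_j)\}$. A standard two-case Chebyshev argument (cf. \cite{DKP16,BOS22}) applied to the $L^1$ oscillation bound of Corollary \ref{Co52} then yields
\[
|\{u<m_j+2\varepsilon\omega(r_j)\}\cap B_{j+1}|\le \delta(\varepsilon)|B_{j+1}|,\qquad \delta(\varepsilon)\to 0\ \text{as}\ \varepsilon\to 0,
\]
\emph{provided} the tails in Corollary \ref{Co52} remain universally bounded. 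To check the latter, decompose $\mathbb H^n\setminus 2B_{j+1}=(\mathbb H^n\setminus B_0)\sqcup\bigsqcup_{i=0}^{j-1}(B_i\setminus B_{i+1})$: on each annulus $B_i\setminus B_{i+1}$ the induction hypothesis gives $|v|=|u-m_j|\le\omega(r_i)$, and summing the resulting geometric series $\sum_i\sigma^{(j+1-i)sp}\omega(r_i)^{q-1}$ against $\omega(r_{j+1})^{p-1}$ is possible precisely when $\beta<sp/(q-1)$; the outermost shell is absorbed by the definition of $\omega(r_0)$.

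The second step is a De Giorgi iteration based on the Caccioppoli inequality of Lemma \ref{Le42} applied to $(u-k_i)_-$ on nested balls $B_{\rho_i}$ with $\rho_i=r_{j+1}(1+2^{-i})/2$ and truncation levels $k_i=(\varepsilon+\varepsilon 2^{-i})\omega(r_j)$. Feeding the resulting energy bound into the Sobolev--Poincar\'e machinery of Lemma \ref{Le107} produces a recursion
\[
y_{i+1}\le C b^i y_i^{1+\gamma},\qquad y_i:=\fint_{B_{\rho_i}}H_0((u-k_i)_-)\,d\xi,
\]
with $\gamma>0$ and tail terms handled by the same dyadic decomposition as above. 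By Step 1, $y_0$ can be made smaller than the critical threshold in Lemma \ref{Le27} by choosing $\varepsilon$ sufficiently small, so $y_\infty=0$ and therefore $u\ge m_j+\varepsilon\omega(r_j)$ a.e. in $B_{j+1}$. This gives $\mathop{\rm osc}_{B_{j+1}}u\le(1-\varepsilon)\omega(r_j)$; choosing $\sigma\in(0,1/4]$ and $\beta\in(0,sp/(q-1))$ with $\sigma^{\beta}=1-\varepsilon$ closes the induction.

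The main obstacle is the compatibility of constants across the two steps. Both rely on nonlocal tails of the shifted function $v$ that mix $p-1$ and $q-1$ powers at different exponential scales; their geometric summability against $\omega(r_{j+1})^{p-1}$ is possible only under $\beta<sp/(q-1)$, which is exactly the H\"older exponent ceiling stated. In addition, the factor $K=1+d^{q-p}+\|u\|_{L^\infty(\Omega')}^{q-p}$ from Corollary \ref{Co52} remains bounded since $d\le\varepsilon\omega(r_j)\le \omega(r_0)$ and $u\in L^\infty(\Omega')$, while $\varepsilon$ must be small enough both to start the Caccioppoli iteration and to correspond to some admissible $\sigma\in(0,1/4]$; this quantitative balancing is the delicate endpoint of the argument.
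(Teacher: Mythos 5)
Your proposal is correct and follows essentially the same route as the paper: induction with the density dichotomy, the logarithmic estimate (Corollary \ref{Co52}) to shrink the measure of the bad set with tails summed dyadically under $\beta<\frac{sp}{q-1}$, a De Giorgi iteration via Lemma \ref{Le42} and Lemma \ref{Le107}, and the final balancing $\sigma^\beta\ge 1-\varepsilon$ with $\varepsilon=\sigma^{\frac{sp}{q-1}-\beta}$. The only nitpick is that the dichotomy should be taken at the level $\inf_{B_j}u+\omega(r_j)/2$ (as in the paper) rather than at the midpoint $(M_j+m_j)/2$, so that the claimed lower bound $v\ge\omega(r_j)/2$ on the density set genuinely holds.
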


\begin{proof}
 Argue by induction. The conclusion is obvious for $j=0$ and then assume it holds true for $i\le j$. Now we show this claim for $j+1$. Let us notice the simple fact that either
\begin{equation}
\label{3-14}
\left|2B_{j+1}\cap\left\{u\ge \inf_{B_j}u+\omega(r_j)/2\right\}\right|\ge\frac{1}{2}|2B_{j+1}|
\end{equation}
or
\begin{equation}
\label{3-15}
\left|2B_{j+1}\cap\left\{u<\inf_{B_j}u+\omega(r_j)/2\right\}\right|\ge\frac{1}{2}|2B_{j+1}|.
\end{equation}
Define
\begin{equation*}
u_j=\begin{cases}u-\inf_{B_j}u, &\text{\textmd{if \eqref{3-14} occurs}},\\[2mm]
\sup_{B_j}u-u, &\text{\textmd{if \eqref{3-15} occurs}}.
\end{cases}
\end{equation*}
Obviously, $u_j\ge0$ in $B_j$ and
\begin{equation}
\label{3-16}
|2B_{j+1}\cap\{u_j\geq\omega(r_j)/2\}|\ge\frac{1}{2}|2B_{j+1}|.
\end{equation}
Moreover, $u_j$ is a weak solution to \eqref{eqy11} such that
\begin{equation}
\label{3-17}
\sup_{B_i}|u_j|\le \omega(r_i) \quad\text{for any } i\in\{0,1,2,\cdots,j\}.
\end{equation}

Now we set an auxiliary function
$$
w:=\min\left\{\left[\log\left(\frac{\omega(r_j)/2+d}{u_j+d}\right)\right]_+,k\right\} \quad\text{with } k>0.
$$
Applying Corollary \ref{Co52} derives
\begin{align}
\label{3-18}
&\quad\fint_{2B_{j+1}}|w-(w)_{2B_{j+1}}|\,d\xi \nonumber\\
&\leq CK^2\left(1+d^{1-p}r^{sp}_{j+1}\int_{\mathbb{H}^n\setminus B_j}\frac{|u_j|^{p-1}+|u_j|^{q-1}}{\|\xi_0^{-1}\circ\xi\|_{\mathbb{H}^n}^{Q+sp}}\,d\xi+d^{1-q}r^{tq}_{j+1}\int_{\mathbb{H}^n\setminus B_j}\frac{|u_j|^{q-1}}{\|\xi_0^{-1}\circ\xi\|_{\mathbb{H}^n}^{Q+tq}}\,d\xi\right)
\end{align}
with $K$ defined as in Lemma \ref{Le51}. We evaluate the second integral at the right-hand side. By means of \eqref{3-17} and the definition of $\omega(r_0)$,
\begin{align}\label{3-19}
r^{tq}_{j+1}\int_{\mathbb{H}^n\setminus B_j}\frac{|u_j|^{q-1}}{\|\xi_0^{-1}\circ\xi\|_{\mathbb{H}^n}^{Q+tq}}\,d\xi &=r^{tq}_j\sum^j_{i=1}\int_{B_{i-1}\setminus B_i}\frac{|u_j|^{q-1}}{\|\xi_0^{-1}\circ\xi\|_{\mathbb{H}^n}^{Q+tq}}\,d\xi
+r^{tq}_j\int_{\mathbb{H}^n\setminus B_0}\frac{|u_j|^{q-1}}{\|\xi_0^{-1}\circ\xi\|_{\mathbb{H}^n}^{Q+tq}}\,d\xi \nonumber\\
&\leq\sum^j_{i=1}\omega(r_{i-1})^{q-1}\left(\frac{r_j}{r_i}\right)^{tq}+Cr^{tq}_j\int_{\mathbb{H}^n\setminus B_0}\frac{|u|^{q-1}+(\sup_{B_0}|u|)^{q-1}}{\|\xi_0^{-1}\circ\xi\|_{\mathbb{H}^n}^{Q+tq}}\,d\xi \nonumber\\
&\leq C\sum^j_{i=1}\left(\frac{r_j}{r_i}\right)^{tq}\omega(r_{i-1})^{q-1} \nonumber\\
&\le C\frac{4^{tq-\beta(q-1)}}{(tq-\beta(q-1))\log4}\sigma^{-\beta(q-1)}\omega(r_j)^{q-1},
\end{align}
where we used the fact that $\beta<\frac{sp}{q-1}\left(\le \frac{tq}{q-1}\right)$. Analogously,
\begin{align}\label{3-20}
r^{sp}_j\int_{\mathbb{H}^n\setminus B_j}\frac{|u_j|^{p-1}+|u_j|^{q-1}}{\|\xi_0^{-1}\circ\xi\|_{\mathbb{H}^n}^{Q+sp}}\,d\xi &\le C(1+\|u\|^{q-p}_{L^\infty(\Omega')})\sum^j_{i=1}\left(\frac{r_j}{r_i}\right)^{sp}\omega(r_{i-1})^{p-1} \nonumber\\
&\le CN\sigma^{-\beta(p-1)}\omega(r_j)^{p-1}
\end{align}
with $\beta<\frac{sp}{q-1}\left(\le \frac{sp}{p-1}\right)$, where $N:=1+\|u\|^{q-p}_{L^\infty(\Omega')}$ and the derivation of $\|u\|^{q-p}_{L^\infty(\Omega')}$ is from the term $|u_j|^{q-1}$, and $C>0$ depends on $n,p,s$ and the difference of $\frac{sp}{p-1}$ and $\beta$. Combining \eqref{3-19}, \eqref{3-20} with \eqref{3-18} and remembering $\frac{r_{j+1}}{r_j}=\sigma$, we get
\begin{align*}
\fint_{2B_{j+1}}|w-(w)_{2B_{j+1}}|\,d\xi
\leq CK^2\left(1+Nd^{1-p}\sigma^{sp-\beta(p-1)}\omega(r_j)^{p-1}+d^{1-q}\sigma^{tq-\beta(q-1)}\omega(r_j)^{q-1}\right),
\end{align*}
where $C$ depends on $n,p,q,s,t,\beta$. %and the difference of $\beta$ and $\frac{tq}{q-1}$, and $\frac{sp}{p-1}$.

In what follows, picking
$$
d:=\sigma^{\frac{sp}{q-1}-\beta}\omega(r_j)
$$
and recalling $\omega(r_j)=\sigma^{j\beta}\omega(r_0)$, we find
\begin{align*}
%\label{3-21}
\fint_{2B_{j+1}}|w-(w)_{2B_{j+1}}|\,d\xi &\leq CK^2\left[1+N\sigma^{\left({\frac{sp}{q-1}-\beta}\right)(1-p)+\left({\frac{sp}{p-1}-\beta}\right)(p-1)}
+\sigma^{\left({\frac{sp}{q-1}-\beta}\right)(1-q)+\left({\frac{tq}{q-1}-\beta}\right)(q-1)}\right] \nonumber\\
&\leq CN^3,
\end{align*}
where $C$ depends on $n,p,q,s,t,\alpha,[a]_\alpha,\|a\|_{L^\infty}$ and the difference of $\beta$ and $\frac{tq}{q-1}$, and $\frac{sp}{p-1}$. Here we need utilize the definition of $K$ as in Lemma \ref{Le51}, and $\omega(r_j)\le 2\|u\|_{L^\infty(\Omega')}$. From the last inequality,
\begin{equation*}
\frac{|2B_{j+1}\cap\{w=k\}|}{|2B_{j+1}|}\le \frac{CN^3}{k}.
\end{equation*}
We refer to \cite[page 1296]{DKP16} for the details. By taking
$$
k=\log\left(\frac{\omega(r_j)/2+\varepsilon\omega(r_j)}{3\varepsilon\omega(r_j)}\right)=\log\left(\frac{1/2+\varepsilon}{3\varepsilon}\right)\approx\log\frac{1}{\varepsilon}
$$
with $\varepsilon:=\sigma^{\frac{sp}{q-1}-\beta}$, it holds that
\begin{equation}
\label{3-22}
\frac{|2B_{j+1}\cap\{u_j\leq2\varepsilon\omega(r_j)\}|}{|2B_{j+1}|}\le \frac{CN^3}{k}\le \frac{C_{\rm log}N^3}{\log\frac{1}{\sigma}}
\end{equation}
for the constant $C_{\rm log}>0$ depending on $n,p,q,s,t,\alpha,[a]_\alpha,\|a\|_{L^\infty}$ and $\beta$.

At this moment, we are going to perform a suitable iteration. For each $i=0,1,\cdots$, let
$$
\rho_i=r_{j+1}+2^{-i}r_{j+1}, \quad \hat{\rho_i}=\frac{\rho_i+3\rho_{i+1}}{4},  \quad \tilde{\rho_i}=\frac{3\rho_i+\rho_{i+1}}{4}
$$
and the corresponding balls
$$
B^i=B_{\rho_i}, \quad \hat{B}^i=B_{\hat{\rho_i}}, \quad \tilde{B}^i=B_{\tilde{\rho_i}}.
$$
Then take the cut-off functions $\psi_i\in C^\infty_0(\tilde{B}^i)$ such that
$$
0\le \psi_i\le 1, \quad \psi_i\equiv 1 \text{ in } \hat{B}^{i} \quad\text{and} \quad |\nabla_H\psi_i|\le 2^{i+2}r^{-1}_{j+1}.%{\mathbb{H}^n}
$$
Besides, set
$$
k_i=(1+2^{-i})\varepsilon\omega(r_j), \quad w_i=(k_i-u_j)_+
$$
and
$$
A_i=\frac{|B^i\cap \{u_j\leq k_i\}|}{|B^i|}=\frac{|B^i\cap \{w_j\ge0\}|}{|B^i|}.
$$
Observe the apparent facts that
$$
r_{j+1}\leq \rho_{i+1}<\hat{\rho}_i<\tilde{\rho}_i<\rho_i\le2r_{j+1},\quad 0\le w_i\le k_i\le2\varepsilon\omega(r_j),
$$
and denote
$$
a^+_{j+1}:=\sup_{B_{2r_{j+1}}\times B_{2r_{j+1}}}a(\cdot,\cdot),\ \ \ a^-_{j+1}:=\inf_{B_{2r_{j+1}}\times B_{2r_{j+1}}}a(\cdot,\cdot),\ \ \
\overline{G}(\tau):=\frac{\tau^p}{r^{sp}_{j+1}}+a^+_{j+1}\frac{\tau^q}{r^{tq}_{j+1}}.
$$

With the help of Caccioppoli inequality (Lemma \ref{Le42}), we derive
\begin{align}
\label{3-23}
\fint_{\hat{B}^i}\int_{\hat{B}^i}\frac{H(\xi,\eta,|w_i(\xi)-w_i(\eta)|)}{\|\eta^{-1}\circ\xi\|^Q_{\mathbb{H}^n}}\,d\xi d\eta &\le C\fint_{B^i}\int_{B^i}\frac{H(\xi,\eta,(w_i(\xi)+w_i(\eta))|\psi_i(\xi)-\psi_i(\eta)|)}{\|\eta^{-1}\circ\xi\|^Q_{\mathbb{H}^n}}\,d\xi d\eta  \nonumber\\
&\quad +C\fint_{B^i}w_i\psi_i^q\,d\xi\left(\sup_{\eta\in \tilde{B}^i}\int_{\mathbb{H}^n\setminus B^i}\frac{h(\xi,\eta,w_i(\xi))}{\|\eta^{-1}\circ\xi\|^Q_{\mathbb{H}^n}}\,d\xi\right) \nonumber\\
&=:J_1+J_2.
\end{align}
Via the definition of $w_i$ and $\psi_i$, $J_1$ is evaluated as
\begin{align}
\label{3-25}
J_1&\leq C\frac{2^{ip}k_i^p}{r_{j+1}^p}\int_{B^i\cap \{u_j\le k_i\}}\fint_{B^i}\|\eta^{-1}\circ\xi\|_{\mathbb{H}^n}^{-Q+(1-s)p}\,d\xi d\eta \nonumber\\
&\quad+Ca^+_{j+1}\frac{2^{iq}k_i^q}{r_{j+1}^q}\int_{B^i\cap \{u_j\le k_i\}}\fint_{B^i}\|\eta^{-1}\circ\xi\|_{\mathbb{H}^n}^{-Q+(1-t)q}\,d\xi d\eta \nonumber\\
&\leq C2^{iq}\overline{G}(k_i)A_i
\end{align}
and moreover, we have
$$
\fint_{B^i}w_i\psi_i^q\,d\xi\le Ck_iA_i.
$$
As for the nonlocal integral in $J_2$, we first note that if $\eta\in\tilde{B}^i$ and $\xi\in\mathbb{H}^n\setminus B^i$, then
$$
\|\xi_0^{-1}\circ\xi\|_{\mathbb{H}^n}\le\left(1+\frac{\|\xi_0^{-1}\circ\eta\|_{\mathbb{H}^n}}{\|\eta^{-1}\circ\xi\|_{\mathbb{H}^n}}\right)\|\eta^{-1}\circ\xi\|_{\mathbb{H}^n}
\le2^{i+4}\|\eta^{-1}\circ\xi\|_{\mathbb{H}^n}.
$$
Furthermore, $w_i\le k_i\le2\varepsilon\omega(r_j)$ in $B_j$ (by $u_j\ge0$ in $B_j$), and $w_i\le k_i+|u|$ in $\mathbb{H}^n\setminus B_j$. In a similar way to treat $I_2$ in the proof of Lemma \ref{Le51}, we applying \eqref{3-19}, \eqref{3-20}, the definition of $\varepsilon$ and $B_{j+1}\subset B^i$ to derive
\begin{align*}
&\quad \sup_{\eta\in \tilde{B}^i}\int_{\mathbb{H}^n\setminus B^i}\frac{h(\xi,\eta,w_i(\xi))}{\|\eta^{-1}\circ\xi\|^Q_{\mathbb{H}^n}}\,d\xi \\
&\leq \sup_{\eta\in \tilde{B}^i}\int_{\mathbb{H}^n\setminus B^i}\frac{w_i^{p-1}+w_i^{q-1}}{\|\eta^{-1}\circ\xi\|^{Q+sp}_{\mathbb{H}^n}}
+a^+_{j+1}\frac{w_i^{q-1}}{\|\eta^{-1}\circ\xi\|^{Q+tq}_{\mathbb{H}^n}}\,d\xi\\
&\leq C2^{i(Q+sp+tq)}\int_{\mathbb{H}^n\setminus B_{j+1}}\frac{w_i^{p-1}+w_i^{q-1}}{\|\xi_0^{-1}\circ\xi\|^{Q+sp}_{\mathbb{H}^n}}
+a^+_{j+1}\frac{w_i^{q-1}}{\|\xi_0^{-1}\circ\xi\|^{Q+tq}_{\mathbb{H}^n}}\,d\xi\\
&\leq C2^{i(Q+sp+tq)}\int_{\mathbb{H}^n\setminus B_{j}}\frac{|u_j|^{p-1}+|u_j|^{q-1}}{\|\xi_0^{-1}\circ\xi\|^{Q+sp}_{\mathbb{H}^n}}+
a^+_{j+1}\frac{|u_j|^{q-1}}{\|\xi_0^{-1}\circ\xi\|^{Q+tq}_{\mathbb{H}^n}}\,d\xi\\
&\quad+ C2^{i(Q+sp+tq)}\int_{\mathbb{H}^n\setminus B_{j+1}}\frac{k_i^{p-1}+k_i^{q-1}}{\|\xi_0^{-1}\circ\xi\|^{Q+sp}_{\mathbb{H}^n}}+
a^+_{j+1}\frac{k_i^{q-1}}{\|\xi_0^{-1}\circ\xi\|^{Q+tq}_{\mathbb{H}^n}}\,d\xi\\
&\leq C2^{i(Q+sp+tq)}\left(\frac{N\omega(r_j)^{p-1}}{r_j^{sp}\sigma^{\beta(p-1)}}+a^+_{j+1}\frac{\omega(r_j)^{q-1}}{r_j^{tq}\sigma^{\beta(q-1)}}
+\frac{k_i^{p-1}+k_i^{q-1}}{r_{j+1}^{sp}}+a^+_{j+1}\frac{k_i^{q-1}}{r_{j+1}^{tq}}\right) \\
&\leq C2^{i(Q+sp+tq)}\left(\frac{Nk_i^{p-1}}{\varepsilon^{p-1}r_j^{sp}\sigma^{\beta(p-1)}}+a^+_{j+1}\frac{k_i^{q-1}}{\varepsilon^{q-1}r_j^{tq}\sigma^{\beta(q-1)}}
+\frac{Nk_i^{p-1}}{r_{j+1}^{sp}}+a^+_{j+1}\frac{k_i^{q-1}}{r_{j+1}^{tq}}\right) \\
&\leq CN2^{i(Q+sp+tq)}\left(\frac{\sigma^{sp-\frac{sp(p-1)}{q-1}}k_i^{p-1}}{r_{j+1}^{sp}}+a^+_{j+1}\frac{\sigma^{tq-sp}k_i^{q-1}}{r_{j+1}^{tq}}
+\frac{\overline{G}(k_i)}{k_i}\right) \\
&\le CN2^{i(Q+sp+tq)}\frac{\overline{G}(k_i)}{k_i}.
\end{align*}
Therefore,
\begin{equation}
\label{3-28}
J_2\le CN2^{i(Q+sp+tq)}\overline{G}(k_i)A_i.
\end{equation}

On the other hand, making use of Lemma \ref{Le107} with $u:=w_i$ yields that
\begin{align}
\label{3-24}
A^\frac{1}{\gamma}_{i+1}\overline{G}(k_i-k_{i+1})
&\le\left(\fint_{B^{i+1}}\left(\left|\frac{w_i}{r^s_{j+1}}\right|^p+a^+_{j+1}\left|\frac{w_i}{r^t_{j+1}}\right|^q\right)^\gamma\,d\xi\right)
^\frac{1}{\gamma} \nonumber\\
&\leq CN\left(\frac{D_1(\hat{\rho}_i,\rho_{i+1})}{r_{j+1}^{sp}}+\frac{\widetilde{D}_1(\hat{\rho}_i,\rho_{i+1})}{r_{j+1}^{tq}}\right)
\fint_{\hat{B}^i}\int_{\hat{B}^i}\frac{H(\xi,\eta,|w_i(\xi)-w_i(\eta)|)}{\|\eta^{-1}\circ\xi\|^Q_{\mathbb{H}^n}}\,d\xi d\eta \nonumber\\
&\quad+CN(1+D_2(\hat{\rho}_i,\rho_{i+1})+\widetilde{D}_2(\hat{\rho}_i,\rho_{i+1}))\fint_{\hat{B}^i}\left|\frac{w_i}{r^s_{j+1}}\right|^p
+a^-_{j+1}\left|\frac{w_i}{r^t_{j+1}}\right|^q\,d\xi.
\end{align}
Thanks to the definitions of $D_1,D_2,\widetilde{D}_1,\widetilde{D}_2$ and $\hat{\rho}_i,\rho_{i+1}$, we from $\hat{\rho}_i\approx\rho_{i+1}\approx r_{j+1}$ and $\hat{\rho}_i-\rho_{i+1}=2^{-i-3}r_{j+1}$ calculate
$$
\frac{D_1(\hat{\rho}_i,\rho_{i+1})}{r_{j+1}^{sp}}\le C2^{i(Q+sp+p)}, \quad \frac{\widetilde{D}_1(\hat{\rho}_i,\rho_{i+1})}{r_{j+1}^{tq}}\le C2^{i(Q+tq+q)}
$$
and
$$
D_2(\hat{\rho}_i,\rho_{i+1})\le C2^{i(Q+sp)}, \quad \widetilde{D}_2(\hat{\rho}_i,\rho_{i+1})\le C2^{i(Q+tq)}.
$$
It is easy to obtain
\begin{equation}\label{3-26}
\fint_{\hat{B}^i}\left|\frac{w_i}{r^s_{j+1}}\right|^p
+a^-_{j+1}\left|\frac{w_i}{r^t_{j+1}}\right|^q\,d\xi\le C\fint_{{B}^i}\overline{G}(w_i)\,d\xi\le C\overline{G}(k_i)A_i.
\end{equation}
It follows from \eqref{3-23}--\eqref{3-26} that
\begin{equation*}
A^\frac{1}{\gamma}_{i+1}\overline{G}(2^{-i-1}\varepsilon\omega(r_j))=A^\frac{1}{\gamma}_{i+1}\overline{G}(k_i-k_{i+1})\le
CN^22^{i2(Q+2q)}\overline{G}(k_i)A_i\le CN^2 2^{i2(Q+2q)}\overline{G}(\varepsilon\omega(r_j))A_i
\end{equation*}
and further
$$
A_{i+1}\le CN^{2\gamma}2^{i2(Q+3q)\gamma}A^\gamma_i,
$$
where $\gamma=\min\left\{\frac{p^*_s}{p},\frac{q^*_t}{q}\right\}>1$ and $C$ depends on $n,p,q,s,t,\alpha,[a]_\alpha,\|a\|_{L^\infty},\beta$.

Now if $A_0$ fulfills
\begin{align}
\label{3-27}
A_0=\frac{|2B_{j+1}\cap\{u_j\leq 2\varepsilon\omega(r_j)\}|}{|2B_{j+1}|}\leq (CN^{2\gamma})^{-\frac{1}{\gamma-1}}2^{-\frac{2\gamma(Q+3q)}{(\gamma-1)^2}}=:\mu,
\end{align}
then by Lemma \ref{Le27} we deduce $A_i\rightarrow0$ as $i\rightarrow\infty$. This means
$$
u_j\ge\varepsilon\omega(r_j) \quad\text{a.e. in} \ B_{j+1},
$$
which together with \eqref{3-17} leads to
$$
\mathop{\rm osc}\limits_{B_{j+1}}u\le (1-\varepsilon)\omega(r_j)=(1-\varepsilon)\sigma^{-\beta}\omega(r_{j+1}).
$$

Finally, choosing $\beta\in\left(0,\frac{sp}{q-1}\right)$ small enough such that
$$
\sigma^\beta\geq 1-\varepsilon=1-\sigma^{\frac{sp}{q-1}-\beta},
$$
then $\mathrm{osc}_{B_{j+1}}u\leq\omega(r_{j+1})$, and $\beta$ depends on $n,p,q,s,t,\alpha,[a]_\alpha,\|a\|_{L^\infty}$ and $\|u\|_{L^\infty(\Omega')}$. Indeed, due to \eqref{3-22}, it yields that
$$
A_0\le \frac{C_{\rm log}N^3}{\log\frac{1}{\sigma}}\leq \mu
$$
by picking $\sigma\le \mathrm{exp}\left(-\frac{C_{\rm log}N^3}{\mu}\right)$ with $\mu$ in \eqref{3-27}. That is, we could select $\sigma=\min\left\{\frac{1}{4},\mathrm{exp}\left(-\frac{C_{\rm log}N^3}{\mu}\right)\right\}$ to ensure the condition \eqref{3-27} does hold true. Now we finish the proof.
\end{proof}

\section*{Acknowledgements}
This work was supported by the National Natural Science Foundation of China (No. 12071098), the National Postdoctoral Program for Innovative Talents of China (No. BX20220381) and the Fundamental Research Funds for the Central Universities (No. 2022FRFK060022). 

\section*{Declarations}
\subsection*{Conflict of interest} The authors declare that there is no conflict of interest. We also declare that this
manuscript has no associated data.

\subsection*{Data Availability} Data sharing is not applicable to this article as no datasets were generated or analysed
during the current study.

%{\small


\begin{thebibliography}{99}

\bibitem{AM18} Adimurthi A., Mallick A., Hardy type inequality on fractional order Sobolev spaces on the Heisenberg
group. Ann. Sc. Norm. Super. Pisa Cl. Sci., 2018, 18(3): 917--949.

\bibitem{BFM13} Branson T. P., Fontana L., Morpurgo C., Moser-Trudinger and Beckner-Onofri's Inequalities on the CR sphere. Ann. of Math., 2013, 177(1): 1--52.

\bibitem{BKO} Byun S. S., Kim H., Ok J., Local H\"{o}lder continuity for fractional nonlocal equations with general growth. Math. Ann., https://doi.org/10.1007/s00208-022-02472-y.%\\[-15pt]

\bibitem{BOS22} Byun S. S., Ok J., Song K., H\"{o}lder regularity for weak solutions to nonlocal double phase problems. J. Math. Pures Appl., 2022, 168: 110--142.

\bibitem{CKW22} Chaker J., Kim M., Weidner M., Harnack inequality for nonlocal problems with non-standard growth. Math. Ann., 2022, 386(1-2): 533--550.

\bibitem{CCR15} Ciatti P., Cowling M. G., Ricci F., Hardy and uncertainty inequalities on stratified Lie groups. Adv. Math., 2015, 277: 365--387.

\bibitem{CT16} Cinti E., Tan J., A nonlinear Liouville theorem for fractional equations in the Heisenberg group. J. Math. Anal. Appl., 2016, 433: 434--454.

\bibitem{CM15a} Colombo M., Mingione G., Bounded minimisers of double phase variational integrals, Arch. Ration. Mech. Anal., 2015, 218(1):219--274.

\bibitem{CM15b} Colombo M., Mingione G., Regularity for double phase variational problems, Arch. Ration. Mech. Anal., 2015, 215(2): 443--496.

\bibitem{CMM15} Cupini G., Marcellini P., Mascolo E., Local boundedness of minimizers with limit growth conditions. J. Optim. Theory Appl., 2015, 166(1): 1--22.

\bibitem{C81} Cygan J., Subadditicity of homogeneous norms on certain nilpotent Lie groups. Proc. Am. Math. Soc., 1981, 83(1): 69--70.

\bibitem{DP19} De Filippis C., Palatucci G., H\"{o}lder regularity for nonlocal double phase equations. J. Differential Equations, 2019, 267(1):547--586.%\\[-15pt]

\bibitem{DKP14} Di Castro A., Kuusi T., Palatucci G., Nonlocal Harnack inequalities. J. Funct. Anal., 2014, 267(6): 1807--1836.

\bibitem{DKP16} Di Castro A., Kuusi T., Palatucci G., Local behavior of fractional $p$-minimizers. Ann. Inst. H. Poincar\'{e} Anal. Non Lin\'{e}aire, 2016, 33: 1279--1299. %\\[-15pt]

\bibitem{DPV12} Di Nezza E., Palatucci G., Valdinoci E., Hitchhiker's guide to the fractional Sobolev spaces. Bull. Sci. Math., 2012, 136: 521--573.

\bibitem{FZ} Fang Y., Zhang C., Harnack inequality for the nonlocal equations with general growth. Proc. Roy. Soc. Edinburgh Sect. A, https://doi.org/10.1017/prm.2022.55.%\\[-15pt]

\bibitem{FZ23} Fang Y., Zhang C., On weak and viscosity solutions of nonlocal double phase equations. Int. Math. Res. Not. IMRN, 2023 (5): 3746--3789.

\bibitem{FF15} Ferrari F., Franchi B., Harnack inequality for fractional Laplacians in Carnot groups. Math. Z., 2015, 279: 435--458.

\bibitem{FMPPS18} Ferrari F., Miranda M. Jr., Pallara D., Pinamonti A., Sire Y., Fractional Laplacians, perimeters and heat semigroups in Carnot groups. Discrete Cont. Dyn. Syst. Ser. S, 2018, 11(3): 477--491.

\bibitem{FGMT15} Frank R., Gonzalez M., Monticelli D., Tan J., An extension problem for the CR fractional Laplacian. Adv. Math., 2015, 270: 97--137.

\bibitem{GT22} Garofalo N., Tralli G., A class of nonlocal hypoelliptic operators and their extensions. Indiana Univ. Math. J., 2022, 70(5): 1717--1744.

\bibitem{GT21} Garofalo N., Tralli G., Feeling the heat in a group of Heisenberg type. Adv. Math., 2021, 381: Art. 107635.

\bibitem{GKS21} Giacomoni J., Kumar D., Sreenadh K., Global regularity results for non-homogeneous growth fractional problems. J. Geom. Anal., 2021, 32(1): Art 36.

\bibitem{Gia} Giacomoni J., Kumar D., Sreenadh K., H\"{o}lder regularity results for parabolic nonlocal double phase problems. arXiv:2112.04287v1.

\bibitem{G03} Giusti E., Direct Methods in the Calculus of Variations. World Scientific Publishing Co., Inc., River Edge, NJ, 2003.

\bibitem{IMS16} Iannizzotto A., Mosconi S., Squassina M., Global H\"{o}lder regularity for the fractional $p$-Laplacian. Rev. Mat. Iberoam., 2016, 32(4): 1353--1392.

\bibitem{KS18} Kassymov A., Surgan D., Some functional inequalities for the fractional $p$-sub-Laplacian. arXiv:1804.01415.% (2018).

\bibitem{KS20} Kassymov A., Suragan D., Lyapunov-type inequalities for the fractional $p$-sub-Laplacian. Adv. Oper. Theory, 2020, 5(2): 435--452.

\bibitem{KKP16} Korvenp\"{a}\"{a} J., Kuusi T., Palatucci G., The obstacle problem for nonlinear integro-differential operators. Calc. Var. Partial Differential Equations, 2016, 55(3): Art. 63.

\bibitem{MPP23} Manfredini M., Palatucci G., Piccinini M., Polidoro S., H\"{o}lder continuity and boundedness estimates for nonlinear fractional equations in the Heisenberg group. J. Geom. Anal., 2023, 33(3): Art 77.

%\bibitem{O23} Ok J., Local H\"{o}lder regularity for nonlocal equations with variable powers. Calc. Var. Partial Differential Equations, 2023, 62(1): Art. 32.

\bibitem{PP22} Palatucci G., Piccinini M., Nonlocal Harnack inequalities in the Heisenberg group. Calc. Var. Partial Differential Equations, 2022, 61(5): Art. 185.

\bibitem{PP23} Palatucci G., Piccinini M., Nonlinear fractional equations in the Heisenberg group. Preprint, 2023.

\bibitem{P22} Piccinini M., The obstacle problem and the Perron Method for nonlinear fractional equations in the Heisenberg group. Nonlinear Anal., 2022, 222: Art. 112966.

\bibitem{PT23} Prasad H., Tewary V., boundedness of variational solutions to nonlocal double phase parabolic equations. J. Differential Equations, 2023, 351: 243--276.

\bibitem{RT16} Roncal L., Thangavelu S., Hardy's Inequality for fractional powers of the sublaplacian on the Heisenberg group. Adv. Math., 2016, 302: 106--158.

\bibitem{SM22} Scott J., Mengesha T., Self-improving inequalities for bounded weak solutions to nonlocal double phase equations. Commun. Pure Appl. Anal., 2022, 21(1): 183--212.

%\bibitem{WD20} Wang X., Du G., Properties of solutions to fractional $p$-subLaplace equations on the Heisenberg group. Bound. Value Probl. 2020: Art. 128.
\end{thebibliography}
\end{document}